\newtheorem{thm}{Theorem}   
\newtheorem{remark}{Remark}   
\newtheorem{lemma}{Lemma}   
\newtheorem{prop}{Proposition}   
\newtheorem{cor}{Corollary}
\newtheorem{definition}{Definition} 
\DeclareMathOperator{\Lip}{Lip}
\title{Lipschitz interpolative nonlinear ideal procedure}
\date{}
\author{M. A. S. SALEH \\  Department of Mathematics and Computer Applications, \\ College of Science, Al-Nahrain University, Baghdad, Iraq.\\ E-mail:  mas@sc.nahrainuniv.edu.iq}
\begin{document}

\maketitle

\begin{abstract}
We treat the general theory of nonlinear   ideals and extend as many notions as possible from the linear theory to the nonlinear theory. We define nonlinear ideals with special properties  which associate new non-linear ideals to given ones and establish several properties and characterizations of them. Building upon the results of U. Matter  we define a Lipschitz interpolative nonlinear ideal procedure  between metric spaces and Banach spaces and establish this class of Lipschitz operators is an injective Banach nonlinear ideal and show several standard basic properties for such class.  Extending the work of J. A. L\'{o}pez Molina  and E. A. S\'{a}nchez P\'{e}rez   we define   a Lipschitz $\left(p,\theta, q, \nu\right)$-dominated operators for $1\leq p, q <\infty$; $0\leq \theta, \nu< 1$ and establish several characterizations.  Afterwards we generalize a notion of Lipschitz interpolative nonlinear ideal procedure  between arbitrary metric spaces and prove its a nonlinear ideal. Finally, we  present certain  basic counter examples of  Lipschitz interpolative nonlinear ideal procedure between arbitrary metric spaces.  
\end{abstract}

 2010 AMS Subject Classification. Primary 47L20; Secondary 26A16, 47A57. 
 \section{Notations and Preliminaries}\label{Sec. 1}

We introduce concepts and notations that will be used in this article. The letters $E$, $F$ and $G$ will denote Banach spaces. The closed unit ball of a Banach space $E$ is denoted by $B_{E}$. The dual space of $E$ is  denoted by $E^{*}$. The class of all bounded linear operators between arbitrary Banach spaces will be denoted by $\mathfrak{L}$. The symbols $\mathbb{K}$ and $\mathbb{N}$ stand for the set of all scalar field and the set of all natural numbers, respectively. The symbols $W(B_{E^{*}})$ and $W(B_{X^{ \#}})$ stand for the set of all Borel probability measures defined on $B_{E^{*}}$ and $B_{X^{ \#}}$, respectively. The value of  $a$  at the element  $x$  is denoted by $\left\langle x, a\right\rangle$.  We put $E^{\text{inj}}:=\ell_{\infty}(B_{E^{*}})$ and $J_{E} x:=\left(\left\langle x, a\right\rangle\right)$ for $x\in E$. Clearly $J_{E}$ is a metric injection from $E$ into $E^{\text{inj}}$. Let $0<p<\infty$.  The Banach space of all absolutely $p$-summable sequences $\mathbf{x}=(x_j)_{j\in \mathbb{N}}$, where $x_j\in E$, is denoted by $\ell_{p}(E)$. We put 
$$\left\|\mathbf{x}\Big|\ell_{p}(E)\right\|=\Bigg[\sum\limits_{j=1}^{ \infty}\left\|x_j\right\|^{p}\Bigg]^{\frac{1}{p}}<\infty.$$

The Banach space of all weakly absolutely $p$-summable sequences $\mathbf{x}\subset E$, is denoted by $\ell_{p}^{w}(E)$. We put 
\begin{equation} 
\left\|\mathbf{x}\Big|\ell_{p}^{w}(E)\right\|=\sup\limits_{a\in B_{E^{*}}}\Bigg[\sum\limits_{j=1}^{\infty}\left|\left\langle x_j, a\right\rangle\right|^{p}\Bigg]^{\frac{1}{p}}.
\end{equation}  

For the triple sequence $(\sigma, x', x'')\subset\mathbb{R}\times X\times X$. We put  
 $$\left\|(\sigma,x',x'')\Big|\ell_p(\mathbb{R}\times X\times X)\right\|=\Bigg[\sum\limits_{j=1}^{\rm\infty}\left|\sigma_j\right|^{p} d_X(x'_j,x''_j)^{p}\Bigg]^{\frac{1}{p}}.$$

And  

$$\left\|(\sigma,x',x'')\Big|\ell_p^{L,w}(\mathbb{R}\times X\times X)\right\|=\sup\limits_{f\in B_{{X}^{\#}}}\Bigg[\sum\limits_{j=1}^{\infty}\left|\sigma_j\right|^{p}\left|\left\langle f,x'_j\right\rangle-\left\langle f,x''_j\right\rangle\right|^{p}\Bigg]^{\frac{1}{p}}.
$$

For $0\leq \theta <1$ and  $1\leq p<\infty$ we define
 $$\left\|  \mathbf{x} \Big|\delta_{p,\theta} (E)\right\| =\underset{\xi \in B_{E^{\ast }}}{\sup }\left( \underset{j=1}{\overset{\infty }{\sum }}\left(  \left|\left\langle  x_{j}  ,  \xi \right\rangle\right|^{1-\theta }\left\Vert x_{j}\right\Vert ^{\theta }\right) ^{\frac{p}{1-\theta }}\right) ^{\frac{1-\theta }{p}}.$$

Also for all sequences $(\sigma ,x',x'')\subset\mathbb{R}\times X\times X$,  we   define
$$\left\|(\sigma,x',x'')\Big|\delta_{p,\theta}^{L}(\mathbb{R}\times X\times X)\right\|=\sup\limits_{f\in B_{X^{\#}}}\left[ \sum\limits_{j=1}^{\infty }\left(
\left\vert \sigma _{j}\right\vert \left\vert f(x_{j}^{\prime
})-f(x_{j}^{\prime \prime })\right\vert ^{1-\theta }d_{X}(x_{j}^{\prime
},x_{j}^{\prime \prime })^{\theta }\right) ^{\frac{p}{1-\theta }}\right] ^{%
\frac{1-\theta }{p}}
$$

Recall that the definition of an operator ideal between arbitrary Banach spaces of A. Pietsch \cite {P07} and \cite{P87} is as follows. Suppose that, for every pair of Banach spaces $E$ and $F$, we are given a subset $\mathfrak{A}(E,F)$ of $\mathfrak{L}(E,F)$. The class  
$$\mathfrak{A}:=\bigcup_{E,F}\mathfrak{A}(E,F)$$
is said to be an operator ideal, or just an ideal, if the following conditions are satisfied:

\begin{enumerate}\label{Auto5}
	\item[$\bf (OI_0)$] $a^{*}\otimes e\in\mathfrak{A}(E,F)$ for $a^{*}\in E^{*}$ and $e\in F$.
	\item[$\bf (OI_1)$] $S + T\in\mathfrak{A}(E,F)$ for $S,\: T\in\mathfrak{A}(E,F)$.
	\item[$\bf (OI_2)$] $BTA\in\mathfrak{A}(E_{0},F_{0})$ for $A\in \mathfrak{L}(E_{0},E)$, $T\in\mathfrak{A}(E,F)$, and $B\in\mathfrak{L}(F,F_{0})$.
\end{enumerate}

Condition $\bf (OI_0)$ implies that $\mathfrak{A}$ contains nonzero operators. 

\begin{remark} 
The normed (Banach) operator ideal  is designated by $\left[\textfrak{A} , \mathbf{A} \right]$.
\end{remark} 

 \section{Introduction}\label{Sec. 2}  
 
One important example of operator ideals is the class of $p$-summing operators defined by A. Pietsch \cite {P78} as follow: A bounded  operator $T$ from $E$ into $F$ is called $p$-summing if and only if there is a constant $C\geq 0$  such that 
\begin{equation}\label{gfgfgfgfgjh20}
\left\| (T x_{j})_{j=1}^{m}\Big|\ell_p(F)\right\|\leq C\cdot\left\|\left( x_{j}\right)_{j=1}^{m}\Big|\ell_p^{w}(E)\right\|
\end{equation} 
for arbitrary sequence $\left(x_{j}\right)_{j=1}^{m}$ in $E$ and $m\in\mathbb{N}$. Let us denote by $\Pi_{p}(E,F)$ the class of all $p$-summing operators from $E$ into $F$ with $\pi_{p}(T)$ summing norm of $T$ is the infimum of such constants $C$. 

J. D. Farmer and W. B. Johnson \cite {J09} defined a true extension of the linear concept  of $p$-summing operators as follows: a Lipschitz operator $T\in \Lip(X,Y)$ is called Lipschitz $p$-summing map if there is a nonnegative constants $C$ such that for all $m\in\mathbb{N}$, any sequences $x'$, $x''$ in $X$ and $\lambda $ in $\mathbb{R}^{+}$, the inequality
$$\left\|(\lambda, Tx',Tx'')\Big|\ell_p(\mathbb{R}\times X\times X)\right\|\leq C\cdot\left\|(\lambda,x',x'')\Big|\ell_p^{L,w}(\mathbb{R}\times X\times X)\right\| 
$$
holds. Let us denote by $\Pi_{p}^{L}(X, Y)$ the class of all Lipschitz $p$-summing maps from $X$ into $Y$ with $\pi_{p}^{L}(T)$ Lipschitz summing norm of $T$ is the infimum of such constants $C$.

Jarchow and Matter \cite{JM88}    defined a general interpolation procedure for creating a new operator ideal between arbitrary Banach spaces. Also   U. Matter defined in his seminal paper \cite {Matter87}  a  new class of interpolative   ideal procedure  as follows: let $0\leq \theta< 1$ and $\left[\textfrak{A}, \mathbf{A}\right]$ be a normed  operator ideal. A bounded operator $T$ from $E$ into $F$ belongs to $\textfrak{A}_{\theta}(E, F)$ if there exist a Banach space $G$ and a bounded operator $S\in\textfrak{A} (E, G)$ such that 
\begin{equation}\label{prooor}
\left\|Tx|F\right\|\leq\left\|Sx |G\right\|^{1-\theta}\cdot  \left\|x\right\|^{\theta},\ \  \forall\: x  \in E.
\end{equation}
For each $T\in\textfrak{A} _{\theta}(E, F)$, we set 

\begin{equation}\label{poorooor}
\mathbf{A}_{\theta}(T):=\inf\mathbf{A}(S)^{1-\theta}
\end{equation}
where the infimum is taken over all bounded operators $S$ admitted in (\ref{prooor}).

\begin{prop} \cite {Matter87} 
$\left[\textfrak{A} _{\theta}, \mathbf{A}_{\theta}\right]$ is an injective complete quasinormed operator ideal.
\end{prop}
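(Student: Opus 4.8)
The plan is to check the ideal axioms and the quasi-norm properties, then injectivity, and finally completeness, which I expect to be the only real difficulty. I will lean on two elementary facts about the concave function $t\mapsto t^{1-\theta}$ on $[0,\infty)$, namely its subadditivity and the estimate $a^{1-\theta}+b^{1-\theta}\le 2^{\theta}(a+b)^{1-\theta}$, together with a rescaling principle: if $\|Tx\|\le\|Sx\|^{1-\theta}\|x\|^{\theta}$ and $c>0$, then the pair $(G,\,c^{1/(1-\theta)}S)$ satisfies $\|Tx\|\le c\,\|Sx\|^{1-\theta}\|x\|^{\theta}$, so any multiplicative constant in an estimate can be pushed into the representing operator and reappears linearly in $\mathbf{A}_{\theta}$. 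Two preliminary bounds follow at once: taking $G=F$ and $S=\|T\|^{\theta/(1-\theta)}T$ shows $\mathfrak{A}\subseteq\mathfrak{A}_{\theta}$ with $\mathbf{A}_{\theta}(T)\le\|T\|^{\theta}\mathbf{A}(T)^{1-\theta}$, whence $(OI_0)$ holds since $\mathfrak{A}$ contains every rank-one operator; and $\|Tx\|\le\|Sx\|^{1-\theta}\|x\|^{\theta}\le\|S\|^{1-\theta}\|x\|$ gives $\|T\|\le\mathbf{A}_{\theta}(T)$. For $(OI_2)$, if $S$ represents $T$ then $SA\in\mathfrak{A}(E_0,G)$ and $\|BTAz\|\le\|B\|\,\|A\|^{\theta}\|SAz\|^{1-\theta}\|z\|^{\theta}$, so the rescaling principle and the ideal estimate $\mathbf{A}(SA)\le\|A\|\mathbf{A}(S)$ yield $BTA\in\mathfrak{A}_{\theta}$ with $\mathbf{A}_{\theta}(BTA)\le\|B\|\,\|A\|\,\mathbf{A}_{\theta}(T)$.

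For $(OI_1)$ and the quasi-triangle inequality I would take representations $S_i\in\mathfrak{A}(E,G_i)$ of $T_1,T_2$, form the $\ell_1$-direct sum $G=G_1\oplus_1 G_2$, and set $Sx=(S_1x,S_2x)$, so that $S\in\mathfrak{A}(E,G)$ with $\mathbf{A}(S)\le\mathbf{A}(S_1)+\mathbf{A}(S_2)$. Then
\[
\|(T_1+T_2)x\|\le\big(\|S_1x\|^{1-\theta}+\|S_2x\|^{1-\theta}\big)\|x\|^{\theta}\le 2^{\theta}\|Sx\|^{1-\theta}\|x\|^{\theta},
\]
and after absorbing $2^{\theta}$ and using subadditivity of $t\mapsto t^{1-\theta}$ on the infima I get $\mathbf{A}_{\theta}(T_1+T_2)\le 2^{\theta}\big(\mathbf{A}_{\theta}(T_1)+\mathbf{A}_{\theta}(T_2)\big)$. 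Positive homogeneity is proved by the rescaling principle applied to $\lambda T$, and $\mathbf{A}_{\theta}(T)=0\Rightarrow T=0$ follows from $\|T\|\le\mathbf{A}_{\theta}(T)$; hence $\mathbf{A}_{\theta}$ is a quasi-norm with modulus $2^{\theta}$.

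Injectivity is essentially immediate, and is worth isolating as the structural reason the construction behaves well: the defining inequality restricts only $\|Tx\mid F\|$, whereas the witnessing space $G$ and operator $S\in\mathfrak{A}(E,G)$ make no reference to $F$. Thus if $J\colon F\to F_0$ is a metric injection, then $\|JTx\mid F_0\|=\|Tx\mid F\|$ shows that $(G,S)$ represents $T$ exactly when it represents $JT$; the two infima coincide, so $T\in\mathfrak{A}_{\theta}(E,F)\iff JT\in\mathfrak{A}_{\theta}(E,F_0)$ with $\mathbf{A}_{\theta}(T)=\mathbf{A}_{\theta}(JT)$, which is injectivity.

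The main work is completeness. By the series criterion for quasi-Banach spaces it is enough, given a $\mathbf{A}_{\theta}$-Cauchy sequence, to pass to a subsequence with $\mathbf{A}_{\theta}(T_{n+1}-T_n)\le 4^{-n}$, put $R_0=T_1$ and $R_n=T_{n+1}-T_n$, and exhibit a single representation of the operator-norm limit $T=\sum_{n\ge0}R_n$ (which exists in $\mathfrak{L}(E,F)$ because $\|\cdot\|\le\mathbf{A}_{\theta}$). Picking near-optimal representations $S_n\in\mathfrak{A}(E,G_n)$ of $R_n$, the obstacle is that the natural combination $\sum_n\|S_nx\|^{1-\theta}$ is an $\ell_{1-\theta}$-type quantity and $1-\theta<1$ is not a Banach exponent, so one cannot simply sum the $G_n$. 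I would get around this with a weighted $\ell_1$-sum $G=(\bigoplus_n G_n)_{\ell_1}$ and $Sx=(\lambda_n S_nx)_n$, applying Hölder with exponents $1/(1-\theta)$ and $1/\theta$ to obtain
\[
\sum_n\|S_nx\|^{1-\theta}\le\Big(\sum_n\lambda_n\|S_nx\|\Big)^{1-\theta}\Big(\sum_n\lambda_n^{-(1-\theta)/\theta}\Big)^{\theta}=K^{\theta}\,\|Sx\|^{1-\theta}.
\]
The weights $\lambda_n=2^{n}$ make both $K=\sum_n\lambda_n^{-(1-\theta)/\theta}$ and $\mathbf{A}(S)\le\sum_n\lambda_n\mathbf{A}(S_n)$ finite, the latter because $\mathbf{A}(S_n)^{1-\theta}\lesssim\mathbf{A}_{\theta}(R_n)\le 4^{-n}$ decays faster than $\lambda_n$ grows; completeness of $[\mathfrak{A},\mathbf{A}]$ then puts $S\in\mathfrak{A}(E,G)$. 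Absorbing $K^{\theta}$ shows $(G,S)$ represents $T$, so $T\in\mathfrak{A}_{\theta}(E,F)$, and running the same estimate on the tails $\sum_{n\ge N}R_n$ gives $\mathbf{A}_{\theta}(T-T_N)\to 0$. When $\theta=0$ the Hölder step is vacuous, $\lambda_n=1$ works, and the argument reduces to the plain $\ell_1$-sum.
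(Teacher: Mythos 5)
Your proof is correct, but note that the paper itself does not prove this proposition: it is quoted from Matter's paper as a known result, so the only internal point of comparison is the paper's detailed proof of the Lipschitz analogue, namely that $\left[\textfrak{A}^{L}_{\theta}, \mathbf{A}^{L}_{\theta}\right]$ is an injective Banach nonlinear ideal. Your route is essentially the same as that argument: push multiplicative constants into the witnessing operator via $S\mapsto c^{1/(1-\theta)}S$, use $\ell_{1}$-direct sums for additivity and completeness, apply H\"older to tame the exponent $1-\theta$, and observe that the defining inequality depends on $F$ only through $\left\|Tx\right\|$, which gives injectivity for free. The one genuine divergence is in the triangle inequality: you combine the unnormalized $S_{i}$ and invoke $a^{1-\theta}+b^{1-\theta}\leq 2^{\theta}\left(a+b\right)^{1-\theta}$, which yields a quasi-norm with modulus $2^{\theta}$, whereas the paper sums the rescaled operators $\mathbf{A}(S_{i})^{-\theta}S_{i}$ and applies H\"older with exponents $\frac{1}{1-\theta}$ and $\frac{1}{\theta}$, which removes the factor $2^{\theta}$ altogether and produces a genuine norm. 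Since the statement only claims ``quasinormed'', your weaker modulus is sufficient here; the sharper normalization is what allows the Lipschitz version to be asserted as a \emph{Banach} (normed) ideal. Your completeness argument with weights $\lambda_{n}=2^{n}$ is sound, and in fact cleaner than the corresponding step in the paper, whose unweighted sum $S=\sum_{n}S_{n}$ does not quite match the H\"older estimate it is paired with; the intrinsic choice $\lambda_{n}=\mathbf{A}(S_{n})^{-\theta}$ would recover the sharp bound $\mathbf{A}_{\theta}(T)\leq\sum_{n}\mathbf{A}(S_{n})^{1-\theta}$.
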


U. Matter \cite {Matter87} applied Inequality  (\ref{prooor}) to the ideal $\left[\Pi_p, \pi_p\right]$ of absolutely $p$-summing operators and obtained the injective
operator ideal $\left(\Pi_{p}\right)_{\theta}$ which  is complete  with respect to the ideal norm $\left(\pi_{p}\right)_{\theta}$ and established the fundamental theorem of $(p,\theta)$-summing operators for $1\leq p<\infty$ and $0\leq \theta <1$ as follows: 

\begin{thm}\cite {Matter87} 
Let $T$ be a bounded operator  from $E$ into $F$ and $C\geq 0$. The following are equivalent:
\begin{enumerate}
	\item $T\in\left(\Pi_{p}\right)_{\theta}(E, F)$.
	\item There exist a constant $C$ and a probability measure $\mu$ on $B_{E^{*}}$ such that 
	$$\left\|T x | F\right\| \leq C \cdot\left(\int\limits_{B_{E^{*}}}\left(\left|\left\langle x, x^{*}\right\rangle\right|^{1-\theta} \left\|x\right\|^{\theta}\right)^{\frac{p}{1-\theta}} d\mu(x^{*})\right)^{\frac{1-\theta}{p}}, \forall \;x\in E.$$
	 \item There exists a constant $C\geq 0$\ such that for any $\left( x_{j}\right) _{j=1}^{m}\subset E, $ and $m\in\mathbb{N}$ we have $$\left\Vert \left( Tx_{j}\right) _{j=1}^{m}\Big|\ell _{\frac{p}{1-\theta }}(F)\right\Vert \leq C\cdot \left\|\left( x_{j}\right)_{j=1}^{m} | \delta _{p\theta}(E)\right\|.$$
	
\end{enumerate}
In addition, $\left(\pi_{p}\right)_{\theta}(T)$ is the smallest number $C$ for which, respectively, (2) and (3) hold.
\end{thm}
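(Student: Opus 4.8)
The plan is to establish the equivalences through the chain $(1)\Rightarrow(2)\Rightarrow(3)$ together with $(3)\Rightarrow(2)\Rightarrow(1)$, where the only substantial step will be $(3)\Rightarrow(2)$. The organising observation, used repeatedly, is the algebraic identity $\left(\int_{B_{E^{*}}}\left(|\langle x,x^{*}\rangle|^{1-\theta}\|x\|^{\theta}\right)^{\frac{p}{1-\theta}}d\mu\right)^{\frac{1-\theta}{p}}=\|x\|^{\theta}\left(\int_{B_{E^{*}}}|\langle x,x^{*}\rangle|^{p}d\mu\right)^{\frac{1-\theta}{p}}$, valid for any probability measure $\mu$, since it lets me pass freely between the weighted expression appearing in $(2)$ and an ordinary $L_{p}(\mu)$-norm.

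For $(1)\Rightarrow(2)$ I would take the Banach space $G$ and the operator $S\in\Pi_{p}(E,G)$ furnished by the definition of $\left(\Pi_{p}\right)_{\theta}$, so that $\|Tx\|\le\|Sx\|^{1-\theta}\|x\|^{\theta}$. Applying the classical Pietsch domination theorem to $S$ yields a probability measure $\mu$ on $B_{E^{*}}$ with $\|Sx\|\le\pi_{p}(S)\left(\int|\langle x,x^{*}\rangle|^{p}d\mu\right)^{1/p}$; inserting this into the factorization inequality and invoking the identity above produces exactly the estimate in $(2)$ with $C=\pi_{p}(S)^{1-\theta}$, whence the least admissible $C$ is at most $\left(\pi_{p}\right)_{\theta}(T)$. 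The implication $(2)\Rightarrow(3)$ is then a direct computation: raise the pointwise bound to the power $\frac{p}{1-\theta}$, sum over $j$, interchange sum and integral, and bound the integrand by its supremum over $B_{E^{*}}$, which is precisely $\|(x_{j})|\delta_{p,\theta}(E)\|^{\frac{p}{1-\theta}}$ because $\mu$ is a probability measure. For $(2)\Rightarrow(1)$ I would define $S\colon E\to L_{p}(B_{E^{*}},\mu)$ by $Sx=\langle x,\cdot\rangle$; this map factors as $E\hookrightarrow C(B_{E^{*}})\to L_{p}(\mu)$ through the canonical isometric embedding followed by the formal inclusion, the latter being $p$-summing with constant $\mu(B_{E^{*}})^{1/p}=1$, so $S\in\Pi_{p}(E,L_{p}(\mu))$ with $\pi_{p}(S)\le 1$; rescaling $S$ by $C^{1/(1-\theta)}$ turns $(2)$ into the factorization inequality defining $\left(\Pi_{p}\right)_{\theta}$ and gives $\left(\pi_{p}\right)_{\theta}(T)\le C$.

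The heart of the matter is $(3)\Rightarrow(2)$, a Pietsch-type separation argument. Writing $M=\frac{p}{1-\theta}$ and $K=B_{E^{*}}$ (weak-$*$ compact by Banach--Alaoglu), I would associate to each finite sequence $(x_{j})_{j=1}^{m}$ the weak-$*$ continuous function $\phi_{(x_{j})}\in C(K)$ given by $\phi_{(x_{j})}(\xi)=\sum_{j}\|Tx_{j}\|^{M}-C^{M}\sum_{j}|\langle x_{j},\xi\rangle|^{p}\|x_{j}\|^{\frac{\theta p}{1-\theta}}$. A homogeneity check shows both summands scale like $s^{M}$ under $x_{j}\mapsto sx_{j}$, and concatenation of sequences adds the corresponding functions, so the collection $\Phi$ of all such $\phi_{(x_{j})}$ is a convex cone. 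Condition $(3)$ asserts exactly that $\min_{\xi\in K}\phi_{(x_{j})}(\xi)\le 0$ for every member of $\Phi$, so $\Phi$ is disjoint from the open convex cone of strictly positive functions in $C(K)$.

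The main obstacle, and the only place demanding care, is converting this disjointness into a single separating measure. I would invoke the Hahn--Banach separation theorem to obtain a nonzero positive linear functional on $C(K)$ that is nonpositive on $\Phi$; by the Riesz representation theorem this functional is integration against a positive measure, which after normalization becomes a probability measure $\mu$ on $B_{E^{*}}$. Testing the resulting inequality $\int\phi_{(x)}\,d\mu\le 0$ on one-term sequences and taking $M$-th roots delivers the domination inequality of $(2)$ with the same constant $C$. Finally, tracking constants across $(1)\Rightarrow(2)$ and $(2)\Rightarrow(1)$, together with the constant-preserving passages $(2)\Leftrightarrow(3)$, shows that the least $C$ in $(2)$ and in $(3)$ both coincide with $\left(\pi_{p}\right)_{\theta}(T)$, which completes the proof.
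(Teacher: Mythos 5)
Your proposal is correct, and it is essentially the standard Pietsch-domination scheme that this result rests on; the paper itself gives no proof of this theorem (it is quoted from Matter's paper \cite{Matter87}), but the same template is reproduced in the paper's Lipschitz analogue (Theorem \ref{thm1}), where the only difference from your argument is that the separation step $(3)\Rightarrow(2)$ is carried out via the Ky Fan minimax lemma \cite[E.4.2]{P78} applied to the concave family $\Psi$ rather than by a direct Hahn--Banach separation of the convex cone $\Phi$ from the positive cone in $C(B_{E^{*}})$ --- the two devices are interchangeable here and yield the same probability measure. Your homogeneity computation (both terms of $\phi_{(x_j)}$ scale like $s^{p/(1-\theta)}$), the identity pulling $\|x\|^{\theta}$ out of the integral, and the constant-tracking giving $\left(\pi_{p}\right)_{\theta}(T)=\inf C$ are all accurate, so no gap remains.
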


Another example of operator ideals is the class of $(r,p, q)$-summing operators defined by A. Pietsch \cite [Sec. 17.1.1] {P78} as follows: Let $0<r, p, q\leq\infty$ and $\frac{1}{r}\leq\frac{1}{p}+ \frac{1}{q}$. A bounded  operator $T$ from $E$ into $F$ is called $(r,p, q)$-summing if there is a constant $C\geq 0$  such that 
\begin{equation}\label{20}
\left\| \left( \left\langle T x_{j}, b_{j}\right\rangle\right)_{j=1}^{m}\Big|\ell_r \right\Vert\leq C\cdot\left\|\left( x_{j}\right)_{j=1}^{m}\Big|\ell_p^{w}(E)\right\| \left\|\left( b_{j}\right)_{j=1}^{m}\Big|\ell_q^{w}(F^{*})\right\|
\end{equation} 
for arbitrary sequence $\left(x_{j}\right)_{j=1}^{m}$ in $E$, $\left(b_{j}\right)_{j=1}^{m}$ in $F^{*}$ and $m\in\mathbb{N}$. Let us denote by $\mathfrak{P}_{(r, p, q)}(E,F)$ the class of all $(r, p, q)$-summing operators from $E$ into $F$ with $\textbf{P}_{(r, p, q)}(T)$ summing norm of $T$ is the infimum of such constants $C$.

\begin{prop} \cite [Sec. 17.1.2] {P78}
$\left[\mathfrak{P}_{(r, p, q)}, \textbf{P}_{(r, p, q)}\right]$  is a normed operator ideal.
\end{prop}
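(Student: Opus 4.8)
The plan is to verify, one at a time, the three ideal axioms $\bf (OI_0)$, $\bf (OI_1)$, $\bf (OI_2)$ together with the three properties that make $\textbf{P}_{(r,p,q)}$ an ideal norm: subadditivity, the multiplicative estimate under composition with bounded operators, and the correct normalization on rank-one operators. Throughout I would exploit that the defining inequality (\ref{20}) involves only the scalars $\langle Tx_j, b_j\rangle$, so every manipulation reduces to the behaviour of the weak norms $\ell_p^{w}(E)$ and $\ell_q^{w}(F^*)$ under duality and under bounded operators. Absolute homogeneity, $\textbf{P}_{(r,p,q)}(\lambda T)=|\lambda|\,\textbf{P}_{(r,p,q)}(T)$, is immediate from linearity of the pairing and will be used tacitly.

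First, for the rank-one operator $a^{*}\otimes e$ I would compute $\langle(a^{*}\otimes e)x_j,b_j\rangle=\langle x_j,a^{*}\rangle\langle e,b_j\rangle$ and estimate the $\ell_r$-norm of these scalars. Since $\frac1r\le\frac1p+\frac1q$, setting $\frac1s=\frac1p+\frac1q$ gives the contractive embedding $\ell_s\hookrightarrow\ell_r$ (larger exponent, smaller norm), after which Hölder's inequality with exponents $p/s$ and $q/s$ separates the two factors. Bounding $\sum_j|\langle x_j,a^{*}\rangle|^{p}\le\|a^{*}\|^{p}\,\|(x_j)|\ell_p^{w}(E)\|^{p}$ and, viewing $e\in F\subset F^{**}$, $\sum_j|\langle e,b_j\rangle|^{q}=\sum_j|\langle b_j,e\rangle|^{q}\le\|e\|^{q}\,\|(b_j)|\ell_q^{w}(F^*)\|^{q}$ yields $\textbf{P}_{(r,p,q)}(a^{*}\otimes e)\le\|a^{*}\|\,\|e\|$. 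The reverse inequality follows by inserting one-term sequences ($m=1$), where both weak norms collapse to $\|x_1\|$ and $\|b_1\|$; taking suprema over the unit balls forces equality. This settles $\bf (OI_0)$ and the normalization.

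Next, for $\bf (OI_1)$ and subadditivity I would apply Minkowski's inequality in $\ell_r$ to $\langle(S+T)x_j,b_j\rangle=\langle Sx_j,b_j\rangle+\langle Tx_j,b_j\rangle$, which bounds the $\ell_r$-norm of the sum by the sum of the two $\ell_r$-norms, hence by $\bigl(\textbf{P}_{(r,p,q)}(S)+\textbf{P}_{(r,p,q)}(T)\bigr)\|(x_j)|\ell_p^{w}(E)\|\,\|(b_j)|\ell_q^{w}(F^*)\|$. For $\bf (OI_2)$ I would write $\langle BTAx_j,c_j\rangle=\langle T(Ax_j),B^{*}c_j\rangle$ and invoke the two standard estimates $\|(Ax_j)|\ell_p^{w}(E)\|\le\|A\|\,\|(x_j)|\ell_p^{w}(E_0)\|$ and $\|(B^{*}c_j)|\ell_q^{w}(F^*)\|\le\|B\|\,\|(c_j)|\ell_q^{w}(F_0^*)\|$, both obtained by passing to adjoints inside the supremum defining the weak norm and using $\|A^{*}\|=\|A\|$, $\|B^{**}\|=\|B\|$. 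Combining gives $\textbf{P}_{(r,p,q)}(BTA)\le\|B\|\,\textbf{P}_{(r,p,q)}(T)\,\|A\|$.

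Finally, to confirm that $\textbf{P}_{(r,p,q)}$ is a genuine norm I would once more specialize to $m=1$ to obtain $|\langle Tx,b\rangle|\le\textbf{P}_{(r,p,q)}(T)\,\|x\|\,\|b\|$ and take the supremum over $b\in B_{F^{*}}$, giving $\|T\|\le\textbf{P}_{(r,p,q)}(T)$ and hence positive-definiteness. The one point requiring care is the exponent bookkeeping in the rank-one step: the hypothesis $\frac1r\le\frac1p+\frac1q$ must be used precisely, through the embedding $\ell_s\hookrightarrow\ell_r$ followed by Hölder, and one must keep in mind that $(b_j)$ lives in $F^{*}$, so its weak $\ell_q$-norm is a supremum over $B_{F^{**}}$, which is exactly what permits pairing $e\in F$ against it. Everything else is routine; the only structural ingredient implicit in the \emph{normed} claim is $r\ge 1$, needed for Minkowski's inequality in the subadditivity step, since for $0<r<1$ one obtains only a quasi-norm.
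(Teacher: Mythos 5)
This proposition is quoted from Pietsch \cite[Sec.~17.1.2]{P78}; the paper supplies no proof of its own, so there is nothing internal to compare against. Your argument is the standard textbook one and the substantive steps are all correct: the rank-one computation via the contractive embedding $\ell_s\hookrightarrow\ell_r$ for $\tfrac1s=\tfrac1p+\tfrac1q\ge\tfrac1r$ followed by H\"older with the conjugate exponents $p/s$, $q/s$; the reverse inequality from one-term sequences; Minkowski in $\ell_r$ for additivity; the adjoint estimates for the weak norms under $A$ and $B^{*}$; and the specialization $m=1$ giving $\|T\|\le\mathbf{P}_{(r,p,q)}(T)$ and hence definiteness. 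You are also right to flag that the stated parameter range $0<r\le\infty$ only yields a quasi-norm unless $r\ge1$; the paper's phrasing is loose on this point and your caveat is exactly the correct one.

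The one omission worth naming: in Pietsch's terminology a (quasi-)normed operator ideal is required to have \emph{complete} components, and Sec.~17.1.2 asserts completeness as part of the statement. Your proof stops at the norm axioms. The missing piece is routine but should be said: if $(T_n)$ is $\mathbf{P}_{(r,p,q)}$-Cauchy, then by $\|\cdot\|\le\mathbf{P}_{(r,p,q)}(\cdot)$ it converges in operator norm to some $T$, and passing to the limit in the defining inequality (for fixed finite sequences $(x_j)$, $(b_j)$, the left-hand side is a finite sum of scalars depending continuously on the operator) shows $T\in\mathfrak{P}_{(r,p,q)}(E,F)$ with $\mathbf{P}_{(r,p,q)}(T-T_n)\to0$. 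Apart from this, and the unstated but routine handling of the cases $p=\infty$ or $q=\infty$ in the H\"older step, your proof is complete.
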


Let $0<p, q\leq\infty$. A. Pietsch \cite {P78} is also defined $(p, q)$-dominated operator as follows: A bounded  operator $T$ from $E$ into $F$ is called $(p, q)$-dominated if it belongs to the quasi-normed ideal 
$$\left[\mathcal{D}_{\left(p, q\right)}, D_{\left(p, q\right)}\right]:=\left[\mathfrak{P}_{(r, p, q)}, \textbf{P}_{(r, p, q)}\right],$$  
where $\frac{1}{r}=\frac{1}{p} + \frac{1}{q}$. For a special case, if $q=\infty$, then $\left[\mathcal{D}_{\left(p,  \infty\right)}, D_{\left(p,  \infty\right)}\right]:=\left[\Pi_{p}, \pi_{p}\right]$. 

J.A. L\'{o}pez Molina  and E. A. S\'{a}nchez P\'{e}rez \cite {ms93} established the important characteristic of $(p, q)$-dominated operator as follows.

\begin{prop} \cite {ms93}
Let $E$ and $F$ be Banach spaces and $T\in\mathfrak{L}(E,F)$. The following are equivalent:
 
 \begin{enumerate}
	 \item $T\in\mathcal{D}_{\left(p, q\right)}(E,F)$.
	
	 \item There exist a Banach spaces $G$ and $H$, bounded operators  $S_{1}\in\Pi_{p}(E, G)$ and $S_{2}\in\Pi_{q}(F^{*}, H)$ and $C>0$ such that 
\begin{equation} 
 \left|\left\langle Tx, b\right\rangle\right|\leq C \left\|S_{1} x\right\| \left\|S_{2} x\right\|, \ \   \forall x \in E, \forall \;b\in F^{*}.
\end{equation}
\end{enumerate}
\end{prop}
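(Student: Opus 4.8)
The plan is to prove the two implications separately, exploiting the identification $\mathcal{D}_{(p,q)}=\mathfrak{P}_{(r,p,q)}$ with $\frac1r=\frac1p+\frac1q$ (equivalently $\frac rp+\frac rq=1$), and the defining inequality (\ref{20}) of $(r,p,q)$-summing operators. I note in passing that, since $S_2\in\Pi_q(F^{*},H)$, the natural reading of the right-hand factor in (2) is $\|S_2b\|$ with $b\in F^{*}$; I shall prove the statement in that form.

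For the routine direction $(2)\Rightarrow(1)$ I would argue by two successive H\"older estimates. From the pointwise domination $|\langle Tx,b\rangle|\le C\|S_1x\|\,\|S_2b\|$ one gets, for finite sequences $(x_j)_{j=1}^m\subset E$ and $(b_j)_{j=1}^m\subset F^{*}$, that $|\langle Tx_j,b_j\rangle|^r\le C^r\|S_1x_j\|^r\|S_2b_j\|^r$; summing and applying H\"older's inequality with the conjugate exponents $\frac pr$ and $\frac qr$ yields
$$\Big(\sum_j|\langle Tx_j,b_j\rangle|^r\Big)^{1/r}\le C\Big(\sum_j\|S_1x_j\|^p\Big)^{1/p}\Big(\sum_j\|S_2b_j\|^q\Big)^{1/q}.$$
Bounding the two factors by $\pi_p(S_1)\,\|(x_j)\,|\,\ell_p^{w}(E)\|$ and $\pi_q(S_2)\,\|(b_j)\,|\,\ell_q^{w}(F^{*})\|$ respectively gives exactly (\ref{20}), so $T\in\mathfrak{P}_{(r,p,q)}=\mathcal{D}_{(p,q)}$ with $\mathbf{P}_{(r,p,q)}(T)\le C\,\pi_p(S_1)\,\pi_q(S_2)$.

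For $(1)\Rightarrow(2)$ I would run a Pietsch-type domination argument producing two probability measures $\mu\in W(B_{E^{*}})$ and $\nu\in W(B_{F^{**}})$ with
$$|\langle Tx,b\rangle|\le C\Big(\int_{B_{E^{*}}}|\langle x,a\rangle|^p\,d\mu(a)\Big)^{1/p}\Big(\int_{B_{F^{**}}}|\langle b,\phi\rangle|^q\,d\nu(\phi)\Big)^{1/q}$$
for all $x\in E$, $b\in F^{*}$. Once such $\mu,\nu$ are available the operators are the canonical ones: factor the evaluation map $E\to C(B_{E^{*}})\hookrightarrow L_p(\mu)$ and set $S_1x:=\langle x,\cdot\rangle\in L_p(\mu)$, with $G$ the closure of its range; the formal inclusion $C(B_{E^{*}})\hookrightarrow L_p(\mu)$ is $p$-summing of norm $\le 1$, so $S_1\in\Pi_p(E,G)$ and $\|S_1x\|=(\int|\langle x,a\rangle|^p\,d\mu)^{1/p}$. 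Symmetrically set $S_2b:=\langle\cdot,b\rangle\in L_q(\nu)$ on $B_{F^{**}}$, giving $S_2\in\Pi_q(F^{*},H)$ with $\|S_2b\|=(\int|\langle b,\phi\rangle|^q\,d\nu)^{1/q}$, and the displayed bound becomes $|\langle Tx,b\rangle|\le C\|S_1x\|\,\|S_2b\|$, i.e. (2).

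The hard part is the \emph{simultaneous} construction of $\mu$ and $\nu$: the right-hand side is a \emph{product} of a $\mu$-term and a $\nu$-term, so a single application of Pietsch's domination theorem is insufficient and the bilinear coupling blocks a naive one-measure separation. I would resolve this by linearizing the weighted geometric mean. Writing $\theta:=\frac rp$, so that $1-\theta=\frac rq$, and using the elementary identity $a^{\theta}b^{1-\theta}=\inf_{\tau>0}\big(\theta\,\tau^{-(1-\theta)}a+(1-\theta)\,\tau^{\theta}b\big)$ for $a,b\ge 0$, the target domination (raised to the power $r$) is equivalent to the family of \emph{affine} conditions
$$|\langle Tx,b\rangle|^r\le C^r\Big(\theta\,\tau^{-(1-\theta)}\!\!\int|\langle x,a\rangle|^p\,d\mu+(1-\theta)\,\tau^{\theta}\!\!\int|\langle b,\phi\rangle|^q\,d\nu\Big),\quad \tau>0.$$
On the compact convex set $K:=W(B_{E^{*}})\times W(B_{F^{**}})$ (product of weak$^{*}$ topologies) each such function of $(\mu,\nu)$ is affine and weak$^{*}$-continuous, and the functions obtained from finite sequences generate a convex cone; a Ky Fan type minimax (equivalently a Hahn--Banach separation of this cone inside the continuous functions on $K$) then produces a single pair $(\mu,\nu)\in K$ satisfying all the conditions, whence taking the infimum over $\tau$ restores the product bound. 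The feasibility input — that every member of the cone has nonpositive infimum over $K$ — is supplied precisely by (\ref{20}) together with the weighted arithmetic--geometric mean inequality, after a scalar rescaling that absorbs the coefficients and the parameters $\tau_i$, and using that the suprema defining the weak $\ell_p$- and $\ell_q$-norms are attained at Dirac measures (this is where $\mu,\nu$ enter). I expect the verification that this family is a genuine concave family, and the bookkeeping identifying the optimal constant as $C=\mathbf{P}_{(r,p,q)}(T)$, to be the most delicate points.
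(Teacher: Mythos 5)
Your proposal is correct, and it takes essentially the route of the cited source: the paper itself states this proposition without proof (attributing it to \cite{ms93}), but its own proof of the Lipschitz analogue, Theorem \ref{thm1}, runs on exactly the two ingredients you describe --- the H\"older estimate with conjugate exponents $p/r$, $q/r$ for the easy direction, and for the converse a Ky Fan--type separation (\cite[E.4.2]{P78}) over the compact convex set $W(B_{E^{*}})\times W(B_{F^{**}})$ followed by the canonical factorizations through $L_{p}(\mu)$ and $L_{q}(\nu)$. Your $\inf_{\tau}$ linearization of the geometric mean reproduces the standard concave family after the rescaling $x_j\mapsto c\,x_j$, $b_j\mapsto c^{-1}b_j$ (which leaves $\langle Tx_j,b_j\rangle$ invariant and absorbs $\tau$), so the concavity and feasibility checks you flag do close in the expected way; your reading of the misprint $\left\Vert S_{2}x\right\Vert$ as $\left\Vert S_{2}b\right\Vert$ is also the correct one.
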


A general example of $(p, q)$-dominated operators is also defined by J.A. L\'{o}pez Molina  and E. A. S\'{a}nchez P\'{e}rez \cite {ms93}  as follows: Let $1\leq p, q <\infty$ and $0\leq \theta, \nu< 1$ such that $\frac{1}{r}+\frac{1-\theta}{p}+\frac{1-\nu}{q}=1$ with $1\leq r <\infty$.  A bounded operator $T$ from $E$ to $F$ is called   $\left(p,\theta, q, \nu\right)$-dominated if there exist a Banach spaces $G$ and $H$, a bounded operator $S\in\Pi_{p} (E,G)$, a bounded operator $R\in\Pi_{q}(F^{*},H)$ and a positive constant $C$ such that 
\begin{equation}\label{lastrrrwagen1} 
\left|\left\langle Tx, b^{*}\right\rangle\right|\leq C\cdot  \left\|x\right\|^{\theta}\left\|Sx | G\right\|^{1-\theta}\left\|b^{*}\right\|^{\nu}\left\|R(b^{*})|H\right\|^{1-\nu} 
\end{equation}
for arbitrary finite sequences $\textbf{x}$ in $X$ and $b^{*}\subset F^{*}$. 

Let us denote by $\mathcal{D}_{\left(p, \theta, q, \nu\right)} (E,F)$ the class of all   $\left(p, \theta, q, \nu\right)$-dominated operators from $E$ to $F$ with $$D_{\left(p, \theta, q, \nu\right)} (T)=\inf\left\{C \cdot\pi_{p} (S)^{1-\theta}\cdot \pi_{q}(R)^{1-\nu}\right\},$$
where the infimum is taken over all bounded operators $S$ and $R$  and constant $C$ admitted in (\ref{lastrrrwagen1}). They also established an important characteristic of $\left(p,\theta, q, \nu\right)$-dominated operator as follows.

\begin{thm}
Let $E$ and $F$ be Banach spaces and $T\in\mathfrak{L}(E,F)$. The following are equivalent:

\begin{enumerate}
	\item [$\bf (1)$] $T\in\mathcal{D}_{\left(p, \theta, q, \nu\right)} (E,F)$.
	
	\item [$\bf (2)$] There is a constant $C\geq 0$ and regular probabilities $\mu$ and $\tau$ on $B_{E^{*}}$ and $B_{F^{**}}$, respectively such that for every $x$ in $X$ and $b^{*}$ in $F^{*}$ the following inequality holds
$$\left|\left\langle Tx, b^{*}\right\rangle\right|\leq C  \cdot\left[\int\limits_{B_{E^{*}}}\left(\left| \left\langle x, a\right\rangle\right|^{1-\theta}  \left\|x\right\|^{\theta}\right)^\frac{p}{1-\theta} d\mu(a)\right]^\frac{1-\theta}{p} 
\cdot\left[\int\limits_{B_{F^{**}}}\left(\left|\left\langle b^{*}, b^{**}\right\rangle\right|^{1-\nu} \left\|b^{*}\right\|^{\nu}\right)^\frac{q}{1-\nu} d\tau(b^{**})\right]^\frac{1-\nu}{q}.  
$$
\item[$\bf (3)$]  There exists a constant $C\geq 0$ such that for every finite sequences $\textbf{x} $ in $X$  and $b^{*}\subset F^{*}$ the inequality
\begin{equation}
\left\| \left\langle Tx, b^{*}\right\rangle|\ell_{r'}\right\|\leq C\cdot\left\|\textbf{x}\Big|\delta_{p,\theta} (E)\right\|\left\| b^{*} \Big|\delta_{q,\nu}(F^{*})\right\|
\end{equation}
holds. 
\item[$\bf (4)$] There are a Banach space $G$, a bounded operator $A\in\left(\Pi_{p}\right)_{\theta}(X,G)$ and a bounded operator $B\in\mathfrak{L}(E,F)$ such that $B^{*}\in\left(\Pi_{q}\right)_{\nu}^{dual}(F^{*}, G^{*})$ and $T=BA$.
\end{enumerate}
In this case, $D_{\left(p, \theta, q, \nu\right)}$ is equal to the infimum of such constants $C$ in either $\bf (2)$, or $\bf (3)$.

\end{thm}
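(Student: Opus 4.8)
The plan is to establish $(\mathbf 1)\Leftrightarrow(\mathbf 2)\Leftrightarrow(\mathbf 3)$ through the cycle $(\mathbf 1)\Rightarrow(\mathbf 2)\Rightarrow(\mathbf 3)\Rightarrow(\mathbf 2)\Rightarrow(\mathbf 1)$, and then to treat the factorization $(\mathbf 4)$ via the equivalence $(\mathbf 4)\Leftrightarrow(\mathbf 2)$. Throughout I abbreviate $p_{\theta}=\frac{p}{1-\theta}$ and $q_{\nu}=\frac{q}{1-\nu}$, so that the standing relation $\frac1r+\frac{1-\theta}{p}+\frac{1-\nu}{q}=1$ reads $\frac{1}{p_{\theta}}+\frac{1}{q_{\nu}}+\frac1r=1$, whence $\frac{1}{r'}=\frac{1}{p_{\theta}}+\frac{1}{q_{\nu}}$; I also write $\phi_{x}(a)=\big(|\langle x,a\rangle|^{1-\theta}\|x\|^{\theta}\big)^{p_{\theta}}$ on $B_{E^{*}}$ and $\psi_{b^{*}}(b^{**})=\big(|\langle b^{*},b^{**}\rangle|^{1-\nu}\|b^{*}\|^{\nu}\big)^{q_{\nu}}$ on $B_{F^{**}}$, both weak$^{*}$-continuous. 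For $(\mathbf 1)\Rightarrow(\mathbf 2)$ I start from the defining inequality (\ref{lastrrrwagen1}) and apply the Pietsch domination theorem to $S\in\Pi_{p}(E,G)$ and to $R\in\Pi_{q}(F^{*},H)$, producing $\mu\in W(B_{E^{*}})$ and $\tau\in W(B_{F^{**}})$ with $\|Sx\|\le\pi_{p}(S)\big(\int_{B_{E^{*}}}|\langle x,a\rangle|^{p}\,d\mu\big)^{1/p}$ and the analogous bound for $\|Rb^{*}\|$. The elementary identity $\|x\|^{\theta}\big(\int|\langle x,a\rangle|^{p}\,d\mu\big)^{(1-\theta)/p}=\big(\int\phi_{x}\,d\mu\big)^{(1-\theta)/p}$ (obtained by absorbing $\|x\|^{\theta}$ under the integral) turns $\|x\|^{\theta}\|Sx\|^{1-\theta}$, and symmetrically $\|b^{*}\|^{\nu}\|Rb^{*}\|^{1-\nu}$, into the two integrals of $(\mathbf 2)$, giving $(\mathbf 2)$ with constant $C\,\pi_{p}(S)^{1-\theta}\pi_{q}(R)^{1-\nu}$.

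For $(\mathbf 2)\Rightarrow(\mathbf 3)$ I apply $(\mathbf 2)$ to each index $j$, take the $\ell_{r'}$-norm of $(\langle Tx_{j},b_{j}^{*}\rangle)$, and use H\"older's inequality with the three exponents $r',p_{\theta},q_{\nu}$, which is licit precisely because $\frac{1}{r'}=\frac{1}{p_{\theta}}+\frac{1}{q_{\nu}}$; since $\mu,\tau$ are probability measures, $\sum_{j}\int\phi_{x_{j}}\,d\mu\le\sup_{a\in B_{E^{*}}}\sum_{j}\phi_{x_{j}}(a)=\|\mathbf{x}|\delta_{p,\theta}(E)\|^{p_{\theta}}$ and likewise for $\tau$, which yields $(\mathbf 3)$ with the same constant. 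The return passage $(\mathbf 2)\Rightarrow(\mathbf 1)$ is an explicit factorization: taking $G=L_{p}(\mu)$, $H=L_{q}(\tau)$ and the canonical maps $Sx=\langle x,\cdot\rangle\in L_{p}(\mu)$, $Rb^{*}=\langle b^{*},\cdot\rangle\in L_{q}(\tau)$, which are $p$- resp.\ $q$-summing with $\pi_{p}(S),\pi_{q}(R)\le1$, the same identity rewrites $(\mathbf 2)$ as the defining inequality (\ref{lastrrrwagen1}), so $T\in\mathcal{D}_{(p,\theta,q,\nu)}(E,F)$.

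The substantive step is $(\mathbf 3)\Rightarrow(\mathbf 2)$, where both measures must be produced at once; here I would use the Ky Fan minimax method on the compact convex set $P=W(B_{E^{*}})\times W(B_{F^{**}})$. For each $(x,b^{*})$ consider the affine (hence concave) function $g_{(x,b^{*})}(\mu,\tau)=|\langle Tx,b^{*}\rangle|^{r'}-C^{r'}\big(\tfrac{r'}{p_{\theta}}\int\phi_{x}\,d\mu+\tfrac{r'}{q_{\nu}}\int\psi_{b^{*}}\,d\tau\big)$ on $P$. By Ky Fan's lemma the existence of one pair $(\mu,\tau)$ with $g_{(x,b^{*})}(\mu,\tau)\le0$ for all $(x,b^{*})$ reduces to showing $\inf_{P}g\le0$ for every finite convex combination $g=\sum_{k}t_{k}g_{(x_{k},b_{k}^{*})}$. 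Evaluating this infimum (the minus signs turn each infimum over $\mu$ into a maximum over $a\in B_{E^{*}}$, and likewise for $\tau$) and absorbing $t_{k}^{1/p_{\theta}}$ into $x_{k}$ and $t_{k}^{1/q_{\nu}}$ into $b_{k}^{*}$ — the weights on $\langle Tx_{k},b_{k}^{*}\rangle$ balancing exactly because $\frac{1}{p_{\theta}}+\frac{1}{q_{\nu}}+\frac1r=1$ — leaves the inequality $\sum_{k}|\langle T\tilde x_{k},\tilde b_{k}^{*}\rangle|^{r'}\le C^{r'}\big(\tfrac{r'}{p_{\theta}}U^{p_{\theta}}+\tfrac{r'}{q_{\nu}}V^{q_{\nu}}\big)$, where $U=\|(\tilde x_{k})|\delta_{p,\theta}(E)\|$ and $V=\|(\tilde b_{k}^{*})|\delta_{q,\nu}(F^{*})\|$. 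This follows from $(\mathbf 3)$, which gives $\sum_{k}|\langle T\tilde x_{k},\tilde b_{k}^{*}\rangle|^{r'}\le C^{r'}U^{r'}V^{r'}$, together with the weighted arithmetic--geometric inequality $U^{r'}V^{r'}=(U^{p_{\theta}})^{r'/p_{\theta}}(V^{q_{\nu}})^{r'/q_{\nu}}\le\frac{r'}{p_{\theta}}U^{p_{\theta}}+\frac{r'}{q_{\nu}}V^{q_{\nu}}$, valid since $\frac{r'}{p_{\theta}}+\frac{r'}{q_{\nu}}=1$. Ky Fan then supplies a single pair $(\mu,\tau)$ with the additive domination $|\langle Tx,b^{*}\rangle|^{r'}\le C^{r'}\big(\frac{r'}{p_{\theta}}\int\phi_{x}\,d\mu+\frac{r'}{q_{\nu}}\int\psi_{b^{*}}\,d\tau\big)$ for all $x,b^{*}$; replacing $(x,b^{*})$ by $(\lambda x,\lambda^{-1}b^{*})$ (which fixes $\langle Tx,b^{*}\rangle$) and optimizing over $\lambda>0$ converts this into the multiplicative inequality of $(\mathbf 2)$, the exponents $\frac{1-\theta}{p},\frac{1-\nu}{q}$ emerging precisely from $\frac{1}{r'}=\frac{1}{p_{\theta}}+\frac{1}{q_{\nu}}$. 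This minimax construction, and the verification of the finite-combination inequality in particular, is the main obstacle.

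Finally I tie in $(\mathbf 4)$. For $(\mathbf 4)\Rightarrow(\mathbf 2)$ write $T=BA$ and $\langle Tx,b^{*}\rangle=\langle Ax,B^{*}b^{*}\rangle$; H\"older's inequality in the duality $\langle G,G^{*}\rangle$ gives $|\langle Tx,b^{*}\rangle|\le\|Ax\|\,\|B^{*}b^{*}\|$, and the Matter characterization of $(\Pi_{p})_{\theta}$-operators applied to $A$ and, in dual form, to $B^{*}\in(\Pi_{q})_{\nu}^{dual}(F^{*},G^{*})$ bounds $\|Ax\|$ and $\|B^{*}b^{*}\|$ by the two integrals of $(\mathbf 2)$. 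For the converse I would build the factorization from $(\mathbf 1)$: the seminorms $\rho(x)=\|x\|^{\theta}\|Sx\|^{1-\theta}$ on $E$ and $\sigma(b^{*})=\|b^{*}\|^{\nu}\|Rb^{*}\|^{1-\nu}$ on $F^{*}$ satisfy $|\langle Tx,b^{*}\rangle|\le C\rho(x)\sigma(b^{*})$; completing $E$ with respect to $\rho$ yields a Banach space $G$ whose canonical map $A\colon E\to G$ lies in $(\Pi_{p})_{\theta}(E,G)$ by Matter's theorem, while the boundedness of the induced bilinear form on $G\times F^{*}$ furnishes $B\colon G\to F$ with $T=BA$ and $B^{*}\in(\Pi_{q})_{\nu}^{dual}$. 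The delicate points are the identification of the completed space, the check that $B$ maps into $F$ rather than into $F^{**}$, and that $B^{*}$ genuinely lands in the dual procedure; these require the injectivity and completeness of $(\Pi_{q})_{\nu}$ together with a density argument. The equality of $D_{(p,\theta,q,\nu)}(T)$ with the infimum of the admissible constants $C$ in $(\mathbf 2)$ and $(\mathbf 3)$ then follows by tracking constants through the chain, every passage being norm-preserving up to the factors $\pi_{p}(S)^{1-\theta}\pi_{q}(R)^{1-\nu}$ appearing in the definition of $D_{(p,\theta,q,\nu)}$.
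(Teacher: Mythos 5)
Your cycle $(\mathbf 1)\Rightarrow(\mathbf 2)\Rightarrow(\mathbf 3)\Rightarrow(\mathbf 2)\Rightarrow(\mathbf 1)$ is correct, and it is essentially the argument this paper itself runs --- not for the linear statement above, which is quoted from L\'opez Molina and S\'anchez P\'erez without proof, but for its Lipschitz analogue, Theorem~\ref{thm1}: Pietsch domination applied separately to $S$ and $R$ for $(\mathbf 1)\Rightarrow(\mathbf 2)$; the canonical $L_{p}(\mu)$, $L_{q}(\tau)$ factorization for $(\mathbf 2)\Rightarrow(\mathbf 1)$; three-exponent H\"older (licit because $\frac{1}{r'}=\frac{1-\theta}{p}+\frac{1-\nu}{q}$) for $(\mathbf 2)\Rightarrow(\mathbf 3)$; and, for the substantive step $(\mathbf 3)\Rightarrow(\mathbf 2)$, the concave-family minimax lemma \cite[E.4.2]{P78} applied to exactly the functions $\Psi(\mu,\tau)$ you write down, followed by the same homogeneity normalization (the paper rescales by $s_{1}^{-1}$, $s_{2}^{-1}$ where you optimize over $\lambda$ in $(\lambda x,\lambda^{-1}b^{*})$; the two are equivalent since $\phi_{\lambda x}=\lambda^{p/(1-\theta)}\phi_{x}$ and $\psi_{\lambda^{-1}b^{*}}=\lambda^{-q/(1-\nu)}\psi_{b^{*}}$). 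Your exponent bookkeeping --- $r'/p_{\theta}+r'/q_{\nu}=1$, the absorption of the convex weights $t_{k}$, the weighted AM--GM step --- all checks out.

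The one genuine shortfall is $(\mathbf 4)$, which you only sketch and whose ``delicate points'' you name without resolving. They are resolvable, and you should close them: first, $\rho(x)=\|x\|^{\theta}\|Sx\|^{1-\theta}$ is indeed a seminorm, but this needs H\"older's inequality for sums, $(\alpha_{1}+\alpha_{2})^{\theta}(\beta_{1}+\beta_{2})^{1-\theta}\le\alpha_{1}^{\theta}\beta_{1}^{1-\theta}+\alpha_{2}^{\theta}\beta_{2}^{1-\theta}$, which you should state since the whole construction of $G$ rests on it. Second, the worry that $B$ might land in $F^{**}$ rather than $F$ evaporates if you define $B$ on the dense subspace $A(E)$ of $G$ directly by $B(Ax):=Tx$: well-definedness and boundedness follow from $\|Tx\|=\sup_{b^{*}\in B_{F^{*}}}\left|\left\langle Tx,b^{*}\right\rangle\right|\le C\,\rho(x)\sup_{b^{*}\in B_{F^{*}}}\sigma(b^{*})\le C'\rho(x)$, and $B$ extends to the completion with values in the complete space $F$. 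Third, membership of $B^{*}$ in the dual procedure is not a separate density argument: $\left\|B^{*}b^{*}\,|\,G^{*}\right\|=\sup_{\rho(x)\le 1}\left|\left\langle Tx,b^{*}\right\rangle\right|\le C\left\|b^{*}\right\|^{\nu}\left\|Rb^{*}\right\|^{1-\nu}$ is verbatim the defining inequality of $(\Pi_{q})_{\nu}$ for the operator $B^{*}$. As written, your treatment of $(\mathbf 4)$ is a plan rather than a proof; with these three points supplied it becomes one.
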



We now describe the contents of this paper. In Section \ref{Sec. 1}, we introduce notations and preliminaries that will be used in this article. In Section \ref{Sec. 2}, we first present preliminaries of special cases of those operators that map weakly (Lipschitz) $p$-summable sequences  in   arbitrary Banach (metric) space  into strongly (Lipschitz) $p$-summable  ones in  Banach (metric) space  these operators are called (Lipschitz) $p$-summing operators defined  by A. Pietsch \cite {P78},   J. D. Farmer and W. B. Johnson \cite {J09}, respectively. Jarchow and Matter \cite{JM88}  defined a general interpolation procedure to create  a new  ideal from   given  ideals and  U. Matter defined a  new class of interpolative ideal procedure in his seminal paper \cite {Matter87}. He established the fundamental  characterize result of $(p,\theta)$-summing operators for $1\leq p<\infty$ and $0\leq \theta <1$. For $0<p, q\leq\infty$. A. Pietsch \cite {P78}   defined $(p, q)$-dominated operator between arbitrary Banach spaces. J. A. L\'{o}pez Molina  and E. A. S\'{a}nchez P\'{e}rez \cite {ms93} established the fundamental  characterize  of $(p, q)$-dominated operator. Afterwards  a general example of $(p, q)$-dominated operators is also defined by J.A. L\'{o}pez Molina  and E. A. S\'{a}nchez P\'{e}rez \cite {ms93}. This class of operators is called   $\left(p,\theta, q, \nu\right)$-dominated for $1\leq p, q <\infty$ and $0\leq \theta, \nu< 1$ such that $\frac{1}{r}+\frac{1-\theta}{p}+\frac{1-\nu}{q}=1$ with $1\leq r <\infty$. They proved an important  characterize of $\left(p,\theta, q, \nu\right)$-dominated operator. In Section \ref{Sec. 3}, we treat the general theory of nonlinear operator ideals. The basic idea here is to extend as many notions as possible from the linear theory to the nonlinear theory. Therefore, we start by recalling the fundamental concepts of an operator ideal defined by A. Pietsch \cite{P78}, see also \cite{P07}. 
  Then, we introduce the corresponding definitions  for nonlinear 
operator ideals in the version close to that of A. Jim{\'e}nez-Vargas, J. M. Sepulcre, and Mois{\'e}s Villegas-Vallecillos \cite{mjam15}. Afterwards, we define nonlinear ideals with special properties which associate new non-linear ideals to given ones. Again, this is parallel to the linear theory. For $0<p\leq 1$ we also define a Lipschitz $p$-norm on nonlinear  ideal and prove that the injective hull $\textfrak{A}^{L}_{\text{inj}}$ is a $p$-normed nonlinear ideal. We generalize U. Matter's interpolative ideal procedure for its nonlinear (Lipschitz) version  between metric spaces and Banach spaces and establish these class of operators is an injective Banach nonlinear ideal as well as we show several basic properties for such class.   Extending the work of J. A. L\'{o}pez Molina  and E. A. S\'{a}nchez P\'{e}rez   we define   a Lipschitz $\left(p,\theta, q, \nu\right)$-dominated operators for $1\leq p, q <\infty$ and $0\leq \theta, \nu< 1$ such that $\frac{1}{r}+\frac{1-\theta}{p}+\frac{1-\nu}{q}=1$ with $1\leq r <\infty$ and establish several characterizations  analogous to linear case of \cite {ms93} and prove that the class of Lipschitz $\left(p,\theta, q, \nu\right)$-dominated operators is a Banach nonlinear ideal under the Lipschitz  $\left(p,\theta, q, \nu\right)$-norm.  In Section \ref{Sec. 4},  we define nonlinear operator ideal concept between arbitrary metric spaces. It is also in the version close to that defined in  \cite {mjam15}.  We generalize a notion of Lipschitz interpolative nonlinear ideal procedure  between arbitrary metric spaces and prove its a nonlinear ideal. Finally, we  present certain  basic counter examples of  Lipschitz interpolative nonlinear ideal procedure between arbitrary metric spaces.

\section{\bf Nonlinear  ideals between arbitrary metric spaces and Banach spaces} \label{Sec. 3}

\begin{definition}\label{A7777}

Suppose that, for every pair of metric spaces $X$ and Banach spaces $F$, we are given a subset $\textfrak{A}^{L}(X,F)$ of $\Lip(X,F)$. The class  
$$\textfrak{A}^{L}:=\bigcup_{X,F}\textfrak{A}^{L}(X,F)$$
is said to be a complete $p$-normed  (Banach) nonlinear ideal $\left(0<p\leq 1\right)$, if the following   conditions are satisfied:

\begin{enumerate}

	\item[$\bf (\widetilde{PNOI_0})$]  $g\boxdot e\in\textfrak{A}^{L}(X,F)$ and $\mathbf{A}^{L}\left(g\boxdot e\right)=\Lip(g)\cdot\left\|e\right\|$ for $g\in X^{\#}$ and $e\in F$.
	
	\item[$\bf (\widetilde{PNOI_1})$]  $S + T\in\textfrak{A}^{L}(X,F)$  and the $p$-triangle inequality holds:
	$$\mathbf{A}^{L}\left(S + T\right)^{p}\leq\mathbf{A}^{L}(S)^{p} + \mathbf{A}^{L}(T)^{p} \ \text{for} \  S,\: T\in\textfrak{A}^{L}(X,F).$$ 
	
	\item[$\bf (\widetilde{PNOI_2})$] $BTA\in\textfrak{A}^{L}(X_{0},F_{0})$ and $\mathbf{A}^{L}\left(BTA\right)\leq\left\|B\right\|\mathbf{A}^{L}(T)\: \Lip(A)$ for $A\in \Lip(X_{0},X)$, $T\in\textfrak{A}^{L}(X,F)$, and $B\in\mathfrak{L}(F,F_{0})$.

		\item[$\bf (\widetilde{PNOI_3})$] All linear spaces $\textfrak{A}^{L}(X,F)$ are complete, where $\mathbf{A}^{L}$ is called a Lipschitz $p$-norm from $\textfrak{A}^{L}$ to $\mathbb{R}^{+}$.	
	
\end{enumerate}

\end{definition}

\begin{remark}\label{soon}  

\begin{description}
	\item[$\bf (1)$] If $p=1$, then $\mathbf{A}^{L}$ is simply called a Lipschitz norm and $\left[\textfrak{A}^{L}, \mathbf{A}^{L}\right]$ is said to be a Banach nonlinear ideal. 
	
	\item[$\bf (2)$] If $\left[\textfrak{A}^{L}, \mathbf{A}^{L}\right]$ be a normed nonlinear ideal, then $\textfrak{A}^{L}\left(X, \mathbb{R}\right)=X^{\#}$ with $\Lip(g)=\mathbf{A}^{L}(g),\forall \ \ g\in X^{\#}$.
\end{description}

\end{remark}    

\begin{prop}\label{Auto4}
Let $\textfrak{A}^{L}$ be a nonlinear ideal. Then all components $\textfrak{A}^{L}(X,F)$ are linear spaces.
\end{prop}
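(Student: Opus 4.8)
The plan is to verify that each component $\textfrak{A}^{L}(X,F)$ is a linear \emph{subspace} of the ambient space $\Lip(X,F)$, which under pointwise addition and scalar multiplication is itself a vector space over $\mathbb{K}$ (the sum and the scalar multiple of Lipschitz maps are again Lipschitz). Consequently it suffices to check three things: that $\textfrak{A}^{L}(X,F)$ is nonempty and contains the zero map, that it is closed under addition, and that it is closed under scalar multiplication. The first two are read off directly from the axioms: taking $e=0$ (equivalently, $g$ the zero functional) in the elementary-operator condition $\bf (\widetilde{PNOI_0})$ gives $g\boxdot e=0\in\textfrak{A}^{L}(X,F)$, so the component is nonempty; and closure under addition is precisely the sum condition $\bf (\widetilde{PNOI_1})$, which asserts $S+T\in\textfrak{A}^{L}(X,F)$ whenever $S,T\in\textfrak{A}^{L}(X,F)$.

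The only point that requires the genuine ideal property rather than a mere reading-off of an axiom is closure under scalar multiplication, and this is where I would focus the argument. For $\lambda\in\mathbb{K}$ and $T\in\textfrak{A}^{L}(X,F)$, I would realize $\lambda T$ as the two-sided composition $(\lambda\,\mathrm{id}_{F})\circ T\circ\mathrm{id}_{X}$. Here $\mathrm{id}_{X}\in\Lip(X,X)$ has Lipschitz constant $1$ and $\lambda\,\mathrm{id}_{F}\in\mathfrak{L}(F,F)$ is a bounded linear operator of norm $|\lambda|$, so the composition/ideal condition $\bf (\widetilde{PNOI_2})$ (applied with $X_{0}=X$ and $F_{0}=F$) yields $\lambda T=(\lambda\,\mathrm{id}_{F})\,T\,\mathrm{id}_{X}\in\textfrak{A}^{L}(X,F)$. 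The case $\lambda=0$ is already subsumed by the observation that the zero map lies in the component. Combining the three facts shows $\textfrak{A}^{L}(X,F)$ is a nonempty subset of $\Lip(X,F)$ stable under both vector-space operations, hence a linear space.

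I do not anticipate any real obstacle, since the statement is a structural consequence of the defining conditions; the proof is essentially the verbatim transcription of the classical linear argument of Pietsch into the Lipschitz setting. The one place calling for slight care is the scalar-multiplication step, where one must not attempt to extract $\lambda T$ from the additivity axiom $\bf (\widetilde{PNOI_1})$ alone, but instead express multiplication by $\lambda$ as composition with the bounded operator $\lambda\,\mathrm{id}_{F}$ in order to invoke $\bf (\widetilde{PNOI_2})$; it is also worth noting explicitly that $\Lip(X,F)$ really is a vector space, so that ``linear subspace'' and ``linear space'' coincide for the conclusion.
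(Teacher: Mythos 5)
Your proposal is correct and follows essentially the same route as the paper: closure under addition comes from $\bf (\widetilde{PNOI_1})$, and the key step — writing $\lambda T=(\lambda\, I_{F})\circ T\circ I_{X}$ and invoking $\bf (\widetilde{PNOI_2})$ — is exactly the paper's argument. The additional remarks about nonemptiness and the zero map are harmless elaboration.
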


\begin{proof}
By the   condition of $\bf (\widetilde{PNOI_1})$ it remains to show that $T\in\textfrak{A}^{L}(X,F)$ and $\lambda\in\mathbb{K}$ imply $\lambda\cdot T\in\textfrak{A}^{L}(X,F)$. This follows from $\lambda\cdot T=\left(\lambda\cdot I_{F}\right)\circ T\circ I_{X}$ and $\bf (\widetilde{PNOI_2})$. \\
\end{proof}

\begin{prop} 
If $\left[\textfrak{A}^{L}, \mathbf{A}^{L}\right]$ be a normed nonlinear ideal, then $\Lip(T)\leq\mathbf{A}^{L}(T)$ for all $T\in\textfrak{A}^{L}$.  
\end{prop}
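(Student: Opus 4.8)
The plan is to transfer the classical linear fact ``the ideal norm dominates the operator norm'' to the Lipschitz setting by testing $T$ against each functional of $B_{F^{*}}$ and invoking Hahn--Banach. The two axioms that do all the work are the composition axiom $\bf (\widetilde{PNOI_2})$ and the normalization axiom $\bf (\widetilde{PNOI_0})$; completeness and the $p$-triangle inequality play no role.

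First I would fix $T\in\textfrak{A}^{L}(X,F)$ and an arbitrary $b^{*}\in B_{F^{*}}$, and form the scalar map $b^{*}\circ T\colon X\to\mathbb{K}$. Applying $\bf (\widetilde{PNOI_2})$ with $A=I_{X}$ (so $\Lip(A)=1$) and $B=b^{*}\in\mathfrak{L}(F,\mathbb{K})$ shows $b^{*}\circ T\in\textfrak{A}^{L}(X,\mathbb{K})$ and
$$\mathbf{A}^{L}(b^{*}\circ T)\leq\|b^{*}\|\,\mathbf{A}^{L}(T)\,\Lip(I_{X})\leq\mathbf{A}^{L}(T).$$
Next I would observe that, as a map $X\to\mathbb{K}$, the composition $b^{*}\circ T$ coincides with the elementary operator $(b^{*}\circ T)\boxdot 1$, where now $b^{*}\circ T$ is read as an element of $X^{\#}$ and $1\in\mathbb{K}$. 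Hence $\bf (\widetilde{PNOI_0})$ evaluates its ideal norm exactly, $\mathbf{A}^{L}(b^{*}\circ T)=\Lip(b^{*}\circ T)\cdot\|1\|=\Lip(b^{*}\circ T)$ (this is precisely the content of the second part of Remark~\ref{soon}), so that $\Lip(b^{*}\circ T)\leq\mathbf{A}^{L}(T)$ for every $b^{*}\in B_{F^{*}}$.

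To conclude I would unwind the scalar Lipschitz estimates: for all $x',x''\in X$ we have $|\langle Tx'-Tx'',b^{*}\rangle|\leq\Lip(b^{*}\circ T)\,d_{X}(x',x'')\leq\mathbf{A}^{L}(T)\,d_{X}(x',x'')$, and taking the supremum over $b^{*}\in B_{F^{*}}$ together with the norming identity $\|y\|=\sup_{b^{*}\in B_{F^{*}}}|\langle y,b^{*}\rangle|$ gives $\|Tx'-Tx''\|\leq\mathbf{A}^{L}(T)\,d_{X}(x',x'')$; dividing by $d_{X}(x',x'')$ and taking the supremum over $x'\neq x''$ yields $\Lip(T)\leq\mathbf{A}^{L}(T)$. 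I do not anticipate a real obstacle; the only delicate point is the trip through the scalar field, namely confirming that $b^{*}\circ T$ is genuinely the elementary map $(b^{*}\circ T)\boxdot 1$ so that $\bf (\widetilde{PNOI_0})$ supplies the clean factor $\|1\|=1$ rather than an uncontrolled constant.
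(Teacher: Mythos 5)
Your proposal is correct and follows essentially the same route as the paper: the paper writes $\Lip(T)=\sup\{\Lip(b^{*}\circ T):b^{*}\in B_{F^{*}}\}$ (the Hahn--Banach norming step you spell out at the end), identifies $\Lip(b^{*}\circ T)=\mathbf{A}^{L}(b^{*}\circ T)$ via Remark~\ref{soon} exactly as you do via $(g\boxdot 1)$ and $\bf(\widetilde{PNOI_0})$, and then bounds the supremum by $\mathbf{A}^{L}(T)$ using the composition axiom $\bf(\widetilde{PNOI_2})$. Your write-up merely makes explicit the two steps the paper leaves terse.
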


\begin{proof}
Let $T$ be an arbitrary Lipschitz operator in $\textfrak{A}^{L}(X,F)$. 
\begin{align}
\Lip(T)=\left\|T^{\#}_{|_{{F}^{*}}}\right\|&=\sup\left\{\Lip(T^{\#} b^{*}) : b^{*}\in B_{{F}^{*}}\right\}\nonumber \\
&=\sup\left\{\Lip\left(b^{*}\circ T\right) : b^{*}\in B_{{F}^{*}}\right\} \nonumber 
\end{align}
Now from Remark \ref{soon} we have $\Lip(b^{*}\circ T)=\mathbf{A}^{L}(b^{*}\circ T)$ for $b^{*}\in {F}^{*}$. It follows $$\Lip(T)=\sup\left\{\mathbf{A}^{L}(b^{*}\circ T) : b^{*}\in B_{{F}^{*}}\right\}\leq\mathbf{A}^{L}(T).$$
\end{proof}

\subsection{\bf Nonlinear Ideals with Special Properties}\label{mAuto1818}

\subsubsection{Lipschitz Procedures}

A rule $$\text{new}: \mathfrak{A}\longrightarrow \mathfrak{A}^{L}_{new}$$

which defines a new nonlinear ideal   $\mathfrak{A}^{L}_{new}$ for every ideal $\mathfrak{A}$ is called a  Lipschitz semi-procedure. A rule $$\text{new} : \textfrak{A}^{L}\longrightarrow \textfrak{A}^{L}_{new}$$

which defines a new nonlinear ideal  $\textfrak{A}^{L}_{new}$ for every nonlinear ideal    $\textfrak{A}^{L}$ is called a  Lipschitz procedure.

\begin{remark}
We now define the following special properties:

\begin{enumerate}

	\item[$\bf (M')$] If $\textfrak{A}^{L}\subseteq\textfrak{B}^{L}$, then $\textfrak{A}^{L}_{new}\subseteq\textfrak{B}^{L}_{new}$  (strong monotony).
	
	\item[$\bf (M'')$] If $\mathfrak{A}\subseteq\mathfrak{B}$, then $\mathfrak{A}^{L}_{new}\subseteq\mathfrak{B}^{L}_{new}$  (monotony).
	
	\item[$\bf (I)$] $\left(\textfrak{A}^{L}_{new}\right)_{new}=\textfrak{A}^{L}_{new}$ for all $\textfrak{A}^{L}$  (idempotence).
	
\end{enumerate}

A strong monotone and idempotent Lipschitz procedure is called a Lipschitz hull  procedure if $\textfrak{A}^{L}\subseteq\textfrak{A}^{L}_{new}$ for all nonlinear ideals. 

\end{remark}

\subsubsection{Closed Nonlinear Ideals}\label{Auto1818}
Let $\textfrak{A}^{L}$ be a nonlinear ideal. A Lipschitz operator $T\in \Lip(X,F)$ belongs to the closure $\textfrak{A}^{L}_{clos}$ if there are $T_{1}, T_{2}, T_{3},\cdots\in\textfrak{A}^{L}(X, F)$ with $\lim\limits_{n} \Lip\left(T - T_{n}\right)=0$. It is not difficult to prove the following result.

\begin{prop}
$\textfrak{A}^{L}_{clos}$ is a nonlinear ideal.        
\end{prop}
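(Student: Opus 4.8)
The plan is to verify the three nonlinear ideal axioms $\bf (\widetilde{PNOI_0})$, $\bf (\widetilde{PNOI_1})$, and $\bf (\widetilde{PNOI_2})$ directly for $\textfrak{A}^{L}_{clos}$, working throughout with the Lipschitz constant $\Lip(\cdot)$ as the governing quantity, since the closure is defined using convergence $\lim_n \Lip(T - T_n) = 0$. The central fact I would establish first is that $\Lip$ behaves well under the ideal operations: it is subadditive, it is absolutely homogeneous, and it satisfies the multiplicative estimate $\Lip(BTA) \leq \|B\|\,\Lip(T)\,\Lip(A)$ for $A \in \Lip(X_0,X)$, $T \in \Lip(X,F)$, $B \in \mathfrak{L}(F,F_0)$. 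These are standard properties of the Lipschitz constant (for the composition estimate, note $\Lip(g \circ \Lip$ chains multiplicatively and a bounded linear map contributes its operator norm), and they are exactly what make the limiting arguments go through.

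For $\bf (\widetilde{PNOI_0})$, I would observe that each elementary operator $g \boxdot e$ already lies in $\textfrak{A}^{L}(X,F)$ by the axiom for $\textfrak{A}^{L}$, hence trivially in its closure $\textfrak{A}^{L}_{clos}(X,F)$ (take the constant approximating sequence $T_n = g \boxdot e$). For $\bf (\widetilde{PNOI_1})$, given $S, T \in \textfrak{A}^{L}_{clos}(X,F)$ with approximating sequences $S_n, T_n \in \textfrak{A}^{L}(X,F)$, the sequence $S_n + T_n$ lies in $\textfrak{A}^{L}(X,F)$ (it is closed under addition), and subadditivity of $\Lip$ gives
\[
\Lip\bigl((S+T) - (S_n + T_n)\bigr) \leq \Lip(S - S_n) + \Lip(T - T_n) \longrightarrow 0,
\]
so $S + T \in \textfrak{A}^{L}_{clos}(X,F)$. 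Homogeneity is handled the same way using $\Lip(\lambda S - \lambda S_n) = |\lambda|\,\Lip(S - S_n)$, which together with $\bf (\widetilde{PNOI_1})$ shows each component is a linear space.

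For $\bf (\widetilde{PNOI_2})$, I would take $T \in \textfrak{A}^{L}_{clos}(X,F)$ with approximants $T_n \in \textfrak{A}^{L}(X,F)$, and $A \in \Lip(X_0,X)$, $B \in \mathfrak{L}(F,F_0)$. Then $B T_n A \in \textfrak{A}^{L}(X_0,F_0)$ by the ideal property of $\textfrak{A}^{L}$, and using the composition estimate for $\Lip$,
\[
\Lip\bigl(BTA - B T_n A\bigr) = \Lip\bigl(B(T - T_n)A\bigr) \leq \|B\|\,\Lip(T - T_n)\,\Lip(A) \longrightarrow 0,
\]
which places $BTA$ in $\textfrak{A}^{L}_{clos}(X_0,F_0)$. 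The only genuinely delicate point, rather than any deep obstacle, is the bookkeeping around completeness: strictly $\textfrak{A}^{L}_{clos}$ as defined is a set-theoretic closure in the Lipschitz seminorm and the proposition only asserts it is a nonlinear ideal (not normed), so I would not need to produce a Lipschitz $p$-norm on it; I expect the main care to be in confirming that the triangle and composition inequalities for $\Lip$ hold in the Lipschitz setting with the correct constants, after which all three axioms follow by routine limiting arguments as above.
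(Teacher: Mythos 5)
Your proof is correct and is exactly the standard verification the paper has in mind: the paper itself omits the argument entirely (stating only that ``it is not difficult to prove''), and your limiting arguments via subadditivity of $\Lip$ and the estimate $\Lip(B(T-T_n)A)\leq \|B\|\,\Lip(T-T_n)\,\Lip(A)$ fill that gap correctly, including the correct observation that only the algebraic axioms need checking. The only blemish is the garbled clause ``note $\Lip(g \circ \Lip$ chains multiplicatively'' in your first paragraph, which should simply read that the Lipschitz constant is submultiplicative under composition and reduces to the operator norm for bounded linear maps.
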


The following statement is evident.

\begin{prop}
The rule $$\text{clos}: \textfrak{A}^{L}\longrightarrow\textfrak{A}^{L}_{clos}$$
is a hull Lipschitz procedure. 
\end{prop}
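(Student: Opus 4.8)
The plan is to verify, one by one, the three properties that make up the definition of a Lipschitz hull procedure: the inclusion $\textfrak{A}^{L}\subseteq\textfrak{A}^{L}_{clos}$, the strong monotony $(M')$, and the idempotence $(I)$. That the rule $\text{clos}$ really sends nonlinear ideals to nonlinear ideals — the prerequisite for it to be a Lipschitz procedure at all — is already recorded in the preceding proposition, so only these three points remain. Throughout, the one tool I would rely on is the subadditivity of the Lipschitz seminorm, namely $\Lip(S+T)\leq\Lip(S)+\Lip(T)$.

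For the inclusion, given $T\in\textfrak{A}^{L}(X,F)$ I would simply take the constant sequence $T_{n}:=T$, so that $\Lip(T-T_{n})=\Lip(0)=0\to 0$; this exhibits $T$ as an element of $\textfrak{A}^{L}_{clos}(X,F)$, and since the pair $(X,F)$ was arbitrary we get $\textfrak{A}^{L}\subseteq\textfrak{A}^{L}_{clos}$.

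For strong monotony, I would assume $\textfrak{A}^{L}\subseteq\textfrak{B}^{L}$ and take $T\in\textfrak{A}^{L}_{clos}(X,F)$. By definition there are $T_{n}\in\textfrak{A}^{L}(X,F)$ with $\Lip(T-T_{n})\to 0$; because $\textfrak{A}^{L}(X,F)\subseteq\textfrak{B}^{L}(X,F)$, the very same sequence witnesses $T\in\textfrak{B}^{L}_{clos}(X,F)$, giving $\textfrak{A}^{L}_{clos}\subseteq\textfrak{B}^{L}_{clos}$.

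The only step demanding genuine work is idempotence. One inclusion, $\textfrak{A}^{L}_{clos}\subseteq(\textfrak{A}^{L}_{clos})_{clos}$, is immediate by applying the inclusion property already proved to the nonlinear ideal $\textfrak{A}^{L}_{clos}$. For the converse I would take $T\in(\textfrak{A}^{L}_{clos})_{clos}(X,F)$, choose $S_{n}\in\textfrak{A}^{L}_{clos}(X,F)$ with $\Lip(T-S_{n})\to 0$, and use that each $S_{n}$ is itself a $\Lip$-limit of some $T_{n,k}\in\textfrak{A}^{L}(X,F)$ to pick, for every $n$, an index $k(n)$ with $\Lip(S_{n}-T_{n,k(n)})<1/n$. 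A diagonal estimate then yields
$$\Lip(T-T_{n,k(n)})\leq\Lip(T-S_{n})+\Lip(S_{n}-T_{n,k(n)})\to 0,$$
so $(T_{n,k(n)})\subset\textfrak{A}^{L}(X,F)$ shows $T\in\textfrak{A}^{L}_{clos}(X,F)$, proving $(\textfrak{A}^{L}_{clos})_{clos}\subseteq\textfrak{A}^{L}_{clos}$ and hence equality. I expect this diagonal argument to be the main (indeed essentially the only) obstacle, though it is routine once the triangle inequality for $\Lip$ is in hand; no further subtlety should arise.
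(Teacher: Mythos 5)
Your verification is correct and is exactly the argument the paper has in mind: the paper offers no written proof (it declares the statement ``evident''), and the three checks you supply --- the constant approximating sequence for the inclusion, reuse of the same sequence for strong monotony $\bf (M')$, and the diagonal/triangle-inequality argument for idempotence --- are the standard way to make that evidence explicit. No gaps; the only point worth flagging is that your argument quietly relies on the preceding proposition that $\textfrak{A}^{L}_{clos}$ is itself a nonlinear ideal, which you correctly cite rather than reprove.
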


\begin{definition}
The nonlinear ideal  $\textfrak{A}^{L}$ is called closed if $\textfrak{A}^{L}=\textfrak{A}^{L}_{clos}$.
\end{definition} 
\begin{prop}
  Let $\textfrak{G}^{L}$ be a Lipschitz approximable nonlinear ideal. Then $\textfrak{G}^{L}$ is the smallest closed nonlinear ideal. 
\end{prop}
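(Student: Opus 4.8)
The plan is to follow the classical route for approximable operator ideals, transported to the Lipschitz setting. Recall that $\textfrak{G}^{L}$ is the closure $\textfrak{F}^{L}_{clos}$ of the nonlinear ideal $\textfrak{F}^{L}$ of \emph{Lipschitz finite-rank} operators, i.e.\ finite sums of elementary operators $g\boxdot e$ with $g\in X^{\#}$ and $e\in F$. Two things must be checked: that $\textfrak{G}^{L}$ is closed, and that it sits inside every closed nonlinear ideal.

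First I would confirm that $\textfrak{G}^{L}$ is closed. Since $\mathrm{clos}$ is a hull Lipschitz procedure, it is idempotent, so $\left(\textfrak{F}^{L}_{clos}\right)_{clos}=\textfrak{F}^{L}_{clos}$; that is, $\textfrak{G}^{L}_{clos}=\textfrak{G}^{L}$, and hence $\textfrak{G}^{L}$ is closed by definition.

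The core step is to show $\textfrak{G}^{L}\subseteq\textfrak{A}^{L}$ for an arbitrary closed nonlinear ideal $\textfrak{A}^{L}$. Fix a metric space $X$ and a Banach space $F$, and take $T\in\textfrak{G}^{L}(X,F)=\textfrak{F}^{L}_{clos}(X,F)$. By the definition of the closure there are $T_{1},T_{2},\dots\in\textfrak{F}^{L}(X,F)$ with $\lim_{n}\Lip(T-T_{n})=0$. Each $T_{n}$ is a finite sum of elementary operators $g\boxdot e$; by $\bf (\widetilde{PNOI_0})$ every such elementary operator belongs to $\textfrak{A}^{L}(X,F)$, and since the components $\textfrak{A}^{L}(X,F)$ are linear spaces (Proposition \ref{Auto4}), we get $T_{n}\in\textfrak{A}^{L}(X,F)$ for all $n$. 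Thus $T$ is a $\Lip$-limit of operators from $\textfrak{A}^{L}(X,F)$, so $T\in\textfrak{A}^{L}_{clos}(X,F)$; as $\textfrak{A}^{L}$ is closed, $\textfrak{A}^{L}_{clos}=\textfrak{A}^{L}$, whence $T\in\textfrak{A}^{L}(X,F)$. This proves $\textfrak{G}^{L}\subseteq\textfrak{A}^{L}$.

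Together the two steps show that $\textfrak{G}^{L}$ is a closed nonlinear ideal contained in every closed nonlinear ideal, i.e.\ it is the smallest one. The only genuine obstacle is the preliminary, definitional verification that the approximable ideal $\textfrak{G}^{L}$ coincides with $\textfrak{F}^{L}_{clos}$ and that $\textfrak{F}^{L}$ is itself a nonlinear ideal --- in particular that $\bf (\widetilde{PNOI_2})$ carries finite-rank Lipschitz operators to finite-rank ones, since $B(g\boxdot e)A=\left(g\circ A\right)\boxdot Be$ is again elementary. Once this elementary-operator description is secured, the argument above is a direct application of the ideal axioms and the hull-procedure properties.
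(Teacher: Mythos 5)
Your proof is correct and follows essentially the same route as the paper: identify $\textfrak{G}^{L}$ with $\textfrak{F}^{L}_{clos}$, note that it is closed by idempotence of the closure procedure, and then use that $\textfrak{F}^{L}$ is contained in every nonlinear ideal together with monotonicity of the closure to conclude $\textfrak{G}^{L}=\textfrak{F}^{L}_{clos}\subseteq\textfrak{A}^{L}_{clos}=\textfrak{A}^{L}$. The only difference is that you unpack the two facts the paper cites as known (minimality of $\textfrak{F}^{L}$ via $\bf (\widetilde{PNOI_0})$ and linearity of the components, and monotonicity of $\mathrm{clos}$), which is a harmless elaboration rather than a different argument.
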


\begin{proof}
By the definition of Lipschitz approximable operators  in \cite{JAJM14} we have $\textfrak{G}^{L}=\textfrak{F}^{L}_{clos}$.  
Hence $\textfrak{G}^{L}$ is closed.
Let $\textfrak{A}^{L}$ be a closed nonlinear ideal. Since $\textfrak{F}^{L}$ is the smallest nonlinear ideal, we obtain from the monotonicity of the closure procedure
$$ \textfrak{G}^{L}=\textfrak{F}^{L}_{clos} \subseteq \textfrak{A}^{L}_{clos} = \textfrak{A}^{L}.$$
\end{proof} 
\subsubsection{Dual Nonlinear Ideals}\label{Auto19}
Let $\mathfrak{A}$ be an ideal. A Lipschitz operator $T\in \Lip(X,F)$ belongs to the Lipschitz dual ideal $\mathfrak{A}^{L}_{dual}$ if $T^{\#}_{|_{{F}^{*}}}\in\mathfrak{A}(F^{*}, X^{\#})$.        

\begin{lemma}\label{Auto244}
Let $T$ in $\textfrak{F}^{L}(X, F)$ with $T=\sum\limits_{j=1}^{m} g_{j}\boxdot e_{j}$. Then $T^{\#}_{|_{{F}^{*}}}=\sum\limits_{j=1}^{m} \hat{e}_{j}\otimes g_{j}$, where $e\longmapsto \hat{e}$ is the natural embedding of the space $F$ into its second dual $F^{**}$.
\end{lemma}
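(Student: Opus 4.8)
The plan is to verify the asserted identity by a direct computation, regarding both sides as linear operators from $F^{*}$ into $X^{\#}$, evaluating them at an arbitrary functional $b^{*}\in F^{*}$, and then comparing the two resulting elements of $X^{\#}$ pointwise on $X$ (recall that members of $X^{\#}$ are themselves Lipschitz functions on $X$, so the final comparison must be made argument by argument).

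First I would recall the two definitions involved. By the notion of Lipschitz transpose already used in the proof that $\Lip(T)\leq\mathbf{A}^{L}(T)$, the map $T^{\#}_{|_{F^{*}}}$ sends $b^{*}\in F^{*}$ to the composition $b^{*}\circ T\in X^{\#}$. Writing out the elementary Lipschitz operator from $\bf (\widetilde{PNOI_0})$, one has $(g_{j}\boxdot e_{j})(x)=\langle g_{j},x\rangle\,e_{j}$, so that $T(x)=\sum_{j=1}^{m}\langle g_{j},x\rangle\,e_{j}$ for every $x\in X$.

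Next I would compute the left-hand side. Fixing $b^{*}\in F^{*}$ and $x\in X$, linearity of $b^{*}$ yields $(b^{*}\circ T)(x)=b^{*}\left(\sum_{j=1}^{m}\langle g_{j},x\rangle\,e_{j}\right)=\sum_{j=1}^{m}\langle g_{j},x\rangle\,\langle e_{j},b^{*}\rangle$. Since this holds for all $x\in X$, the element $T^{\#}_{|_{F^{*}}}(b^{*})\in X^{\#}$ is exactly the function $\sum_{j=1}^{m}\langle e_{j},b^{*}\rangle\,g_{j}$. For the right-hand side I would unwind the rank-one linear map $\hat{e}_{j}\otimes g_{j}\colon F^{*}\to X^{\#}$ according to the convention $a^{*}\otimes y\colon b^{*}\mapsto\langle b^{*},a^{*}\rangle\,y$ together with the definition of the canonical embedding $\hat{e}_{j}(b^{*})=\langle e_{j},b^{*}\rangle$. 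This gives $\left(\sum_{j=1}^{m}\hat{e}_{j}\otimes g_{j}\right)(b^{*})=\sum_{j=1}^{m}\langle b^{*},\hat{e}_{j}\rangle\,g_{j}=\sum_{j=1}^{m}\langle e_{j},b^{*}\rangle\,g_{j}$, which coincides with the expression found for $T^{\#}_{|_{F^{*}}}(b^{*})$. As $b^{*}\in F^{*}$ was arbitrary, the two operators agree.

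I do not expect any genuine obstacle: the content is purely bookkeeping. The only points requiring care are matching the duality conventions so that $\hat{e}_{j}(b^{*})=\langle e_{j},b^{*}\rangle$ (and not a conjugate), and remembering that the equality is between functions in $X^{\#}$, forcing the pointwise comparison over $x\in X$. I would also note in passing that each summand $\hat{e}_{j}\otimes g_{j}$ does land in $X^{\#}$, so that the claimed finite-rank representation of $T^{\#}_{|_{F^{*}}}$ is well posed.
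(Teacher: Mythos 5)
Your proposal is correct and follows essentially the same route as the paper: evaluate $T^{\#}_{|_{F^{*}}}b^{*}$ at an arbitrary $x\in X$ via the duality $\langle T^{\#}_{|_{F^{*}}}b^{*},x\rangle=\langle b^{*},Tx\rangle=\sum_{j}g_{j}(x)\,b^{*}(e_{j})$ and match it with the unwound right-hand side. The only difference is cosmetic — you make the identification $T^{\#}_{|_{F^{*}}}b^{*}=b^{*}\circ T$ and the convention $\hat{e}_{j}(b^{*})=b^{*}(e_{j})$ explicit, which the paper leaves implicit.
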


\begin{proof}
We have $Tx=\sum\limits_{j=1}^{m} g_{j}(x) e_{j}$ for $x\in X$. So for $b^{*}\in F^{*}$,
$$\left\langle T^{\#}_{|_{{F}^{*}}} b^{*},x\right\rangle_{(X^{\#},X)}=\left\langle b^{*}, Tx\right\rangle_{(F^{*},F)}=\sum\limits_{j=1}^{m}g_{j}(x) b^{*} (e_{j}).$$ Hence $T^{\#}_{|_{{F}^{*}}} b^{*}=\sum\limits_{j=1}^{m} b^{*}(e_{j}) g_{j}$. This proves the statement for $T^{\#}_{|_{{F}^{*}}}$. 
\end{proof}

\begin{lemma}\label{Auto16}
Let $T, S\in \Lip(X,F)$, $A\in \Lip(X_{0},X)$, and $B\in\mathfrak{L}(F,F_{0})$. Then
\begin{enumerate}
  	\item $\left(T+S\right)^{\#}_{|_{{F}^{*}}}=T^{\#}_{|_{{F}^{*}}} + S^{\#}_{|_{{F}^{*}}}$.
	  \item $\left(BTA\right)^{\#}_{|_{F^{*}_{0}}}=A^{\#}T^{\#}_{|_{{F}^{*}}}B^{*}$.
\end{enumerate}

 \end{lemma}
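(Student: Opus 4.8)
The plan is to derive both identities directly from the defining duality relation for the Lipschitz transpose recorded in the proof of Lemma \ref{Auto244}, namely that for any $T\in\Lip(X,F)$ the operator $T^{\#}_{|_{F^{*}}}\colon F^{*}\to X^{\#}$ is characterized by
$$\left\langle T^{\#}_{|_{F^{*}}}b^{*},x\right\rangle_{(X^{\#},X)}=\left\langle b^{*},Tx\right\rangle_{(F^{*},F)},\qquad b^{*}\in F^{*},\ x\in X,$$
equivalently $T^{\#}_{|_{F^{*}}}b^{*}=b^{*}\circ T$ viewed as an element of $X^{\#}$. Since two elements of $X^{\#}$ (resp.\ $X_{0}^{\#}$) coincide as soon as they agree at every point, each claim reduces to testing against an arbitrary functional and an arbitrary point and then unwinding the compositions. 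The only conceptual care needed is to keep the three distinct ``adjoints'' apart: the Banach-space adjoint $B^{*}\colon F_{0}^{*}\to F^{*}$ of the \emph{linear} operator $B$, satisfying $\left\langle B^{*}c^{*},f\right\rangle_{(F^{*},F)}=\left\langle c^{*},Bf\right\rangle_{(F_{0}^{*},F_{0})}$, and the Lipschitz transposes $T^{\#}_{|_{F^{*}}}$ and $A^{\#}$, where for $A\in\Lip(X_{0},X)$ the map $A^{\#}\colon X^{\#}\to X_{0}^{\#}$ acts by $g\mapsto g\circ A$ (which does land in $X_{0}^{\#}$, since a composition of Lipschitz maps is Lipschitz).

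For part (1) I would simply observe that for every $b^{*}\in F^{*}$ and $x\in X$,
$$\left\langle (T+S)^{\#}_{|_{F^{*}}}b^{*},x\right\rangle_{(X^{\#},X)}=\left\langle b^{*},(T+S)x\right\rangle_{(F^{*},F)}=\left\langle b^{*},Tx\right\rangle_{(F^{*},F)}+\left\langle b^{*},Sx\right\rangle_{(F^{*},F)},$$
using linearity of $b^{*}$ on $F$. The right-hand side equals $\left\langle T^{\#}_{|_{F^{*}}}b^{*}+S^{\#}_{|_{F^{*}}}b^{*},x\right\rangle_{(X^{\#},X)}$, and as $x$ and $b^{*}$ are arbitrary this gives $(T+S)^{\#}_{|_{F^{*}}}=T^{\#}_{|_{F^{*}}}+S^{\#}_{|_{F^{*}}}$.

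For part (2) I would fix $c^{*}\in F_{0}^{*}$ and $x_{0}\in X_{0}$ and compute the chain
$$\left\langle (BTA)^{\#}_{|_{F_{0}^{*}}}c^{*},x_{0}\right\rangle_{(X_{0}^{\#},X_{0})}=\left\langle c^{*},BTAx_{0}\right\rangle_{(F_{0}^{*},F_{0})}=\left\langle B^{*}c^{*},T(Ax_{0})\right\rangle_{(F^{*},F)},$$
where the last step uses the definition of the Banach adjoint $B^{*}$. Applying the defining relation for $T^{\#}_{|_{F^{*}}}$ at the point $Ax_{0}\in X$ turns this into $\left\langle T^{\#}_{|_{F^{*}}}B^{*}c^{*},Ax_{0}\right\rangle_{(X^{\#},X)}$, and then the identity $\left\langle g,Ax_{0}\right\rangle_{(X^{\#},X)}=\left\langle A^{\#}g,x_{0}\right\rangle_{(X_{0}^{\#},X_{0})}$ with $g=T^{\#}_{|_{F^{*}}}B^{*}c^{*}$ yields $\left\langle A^{\#}T^{\#}_{|_{F^{*}}}B^{*}c^{*},x_{0}\right\rangle_{(X_{0}^{\#},X_{0})}$. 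Since $x_{0}$ and $c^{*}$ are arbitrary, the operator identity $(BTA)^{\#}_{|_{F_{0}^{*}}}=A^{\#}T^{\#}_{|_{F^{*}}}B^{*}$ follows. There is no genuine obstacle here; the statement is essentially a compatibility check, and the only thing that could trip one up is the bookkeeping of domains and codomains of the three transposes together with the order of composition $F_{0}^{*}\xrightarrow{B^{*}}F^{*}\xrightarrow{T^{\#}_{|_{F^{*}}}}X^{\#}\xrightarrow{A^{\#}}X_{0}^{\#}$, which I would verify explicitly to make sure the composite $A^{\#}T^{\#}_{|_{F^{*}}}B^{*}$ is well defined before equating it with $(BTA)^{\#}_{|_{F_{0}^{*}}}$.
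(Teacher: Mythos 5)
Your proposal is correct and follows essentially the same route as the paper's proof: both parts are verified by testing $(T+S)^{\#}_{|_{F^{*}}}$ and $(BTA)^{\#}_{|_{F_{0}^{*}}}$ against an arbitrary functional and an arbitrary point, then unwinding the duality relations $\left\langle T^{\#}_{|_{F^{*}}}b^{*},x\right\rangle=\left\langle b^{*},Tx\right\rangle$, the Banach adjoint identity for $B^{*}$, and $A^{\#}g=g\circ A$ in exactly the order the paper uses. Your additional remarks on the domains and codomains of the three transposes are sound bookkeeping but do not change the argument.
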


\begin{proof}
For  $b^{*}\in F^{*}$ and $x\in X$, we have
\begin{align}
\left\langle \left(T+S\right)^{\#}_{|_{{F}^{*}}} b^{*},x\right\rangle_{(X^{\#},X)}&=\left\langle b^{*}, (T+S)x\right\rangle_{(F^{*},F)}=\left\langle b^{*}, Tx+Sx\right\rangle_{(F^{*},F)}  \nonumber \\
&=\left\langle b^{*}, Tx\right\rangle_{(F^{*},F)} + \left\langle b^{*}, Sx\right\rangle_{(F^{*},F)}\nonumber \\
&=\left\langle T^{\#}_{|_{{F}^{*}}} b^{*},x\right\rangle_{(X^{\#},X)} + \left\langle S^{\#}_{|_{{F}^{*}}} b^{*},x\right\rangle_{(X^{\#},X)}. \nonumber
\end{align}
Hence $\left(T+S\right)^{\#}_{|_{{F}^{*}}}=T^{\#}_{|_{{F}^{*}}} + S^{\#}_{|_{{F}^{*}}}$. For  $b^{*}_{0}\in F^{*}$ and $x_{0}\in X_{0}$, we have
\begin{align}
\left\langle b_{0}^{*}, BTA (x_{0})\right\rangle_{(F_{0}^{*},F_{0})}&=\left\langle b_{0}^{*}, B(TA x_{0})\right\rangle_{(F_{0}^{*},F_{0})}=\left\langle B^{*} b_{0}^{*}, T(A x_{0})\right\rangle_{(F^{*},F)}  \nonumber \\
&=\left\langle T^{\#}_{|_{{F}^{*}}} B^{*} b_{0}^{*}, A x_{0}\right\rangle_{(X^{\#}, X)}=\left\langle A^{\#} T^{\#}_{|_{{F}^{*}}} B^{*} b_{0}^{*}, x_{0}\right\rangle_{(X_{0}^{\#}, X_{0})}. \nonumber 
\end{align}
But also $\left\langle b_{0}^{*}, BTA (x)\right\rangle_{(F_{0}^{*},F_{0})}=\left\langle \left(BTA\right)^{\#}_{|_{F^{*}_{0}}} b_{0}^{*}, x_{0}\right\rangle_{(X_{0}^{\#}, X_{0})}$. Therefore $\left(BTA\right)^{\#}_{|_{F^{*}_{0}}}=A^{\#}T^{\#}_{|_{{F}^{*}}}B^{*}$. 
\end{proof}

\begin{prop}\label{Auto26}
$\mathfrak{A}^{L}_{dual}$ is a nonlinear ideal. 
\end{prop}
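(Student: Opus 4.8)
The plan is to verify the three defining axioms of a nonlinear ideal (in the sense of Definition \ref{A7777}, dropping the norm estimates since $\mathfrak{A}^{L}_{dual}$ is asserted only to be a nonlinear ideal) for the class $\mathfrak{A}^{L}_{dual}$, by transporting each axiom across the correspondence $T\mapsto T^{\#}_{|_{F^{*}}}$ into the already-known ideal axioms for the underlying linear ideal $\mathfrak{A}$. The key technical fact that makes this transport possible is Lemma \ref{Auto16}, which tells us that $T\mapsto T^{\#}_{|_{F^{*}}}$ is additive and behaves contravariantly under composition, namely $(BTA)^{\#}_{|_{F_0^{*}}}=A^{\#}T^{\#}_{|_{F^{*}}}B^{*}$.

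First I would check $\bf (\widetilde{PNOI_0})$: I must show that each elementary Lipschitz operator $g\boxdot e$ (with $g\in X^{\#}$, $e\in F$) lies in $\mathfrak{A}^{L}_{dual}(X,F)$, i.e. that its Lipschitz transpose lies in $\mathfrak{A}(F^{*},X^{\#})$. By Lemma \ref{Auto244} applied to the single-term sum, $(g\boxdot e)^{\#}_{|_{F^{*}}}=\hat{e}\otimes g$, which is a rank-one (finite rank) linear operator $b^{*}\mapsto \langle b^{*},e\rangle\, g = \hat e(b^*)\,g$ from $F^{*}$ into $X^{\#}$. Since $\mathfrak A$ is an ideal, axiom $\bf (OI_0)$ guarantees that every such $\hat e\otimes g$ belongs to $\mathfrak A(F^{*},X^{\#})$; hence $g\boxdot e\in\mathfrak{A}^{L}_{dual}(X,F)$.

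Next I would verify $\bf (\widetilde{PNOI_1})$, additivity. For $S,T\in\mathfrak{A}^{L}_{dual}(X,F)$ we have by definition that $S^{\#}_{|_{F^{*}}}, T^{\#}_{|_{F^{*}}}\in\mathfrak A(F^{*},X^{\#})$. Part (1) of Lemma \ref{Auto16} gives $(S+T)^{\#}_{|_{F^{*}}}=S^{\#}_{|_{F^{*}}}+T^{\#}_{|_{F^{*}}}$, and the linear ideal axiom $\bf (OI_1)$ closes $\mathfrak A(F^{*},X^{\#})$ under sums, so the right-hand side lies in $\mathfrak A(F^{*},X^{\#})$, whence $S+T\in\mathfrak{A}^{L}_{dual}(X,F)$. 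For the ideal property $\bf (\widetilde{PNOI_2})$, take $A\in\Lip(X_0,X)$, $T\in\mathfrak{A}^{L}_{dual}(X,F)$, $B\in\mathfrak L(F,F_0)$; part (2) of Lemma \ref{Auto16} yields $(BTA)^{\#}_{|_{F_0^{*}}}=A^{\#}\,T^{\#}_{|_{F^{*}}}\,B^{*}$. Here $B^{*}\in\mathfrak L(F_0^{*},F^{*})$ is the linear adjoint and $A^{\#}\in\mathfrak L(X^{\#},X_0^{\#})$ is the (bounded linear) Lipschitz transpose of $A$, so this is a composition of the form (bounded linear)$\circ\,T^{\#}_{|_{F^{*}}}\circ$(bounded linear); applying the linear two-sided ideal axiom $\bf (OI_2)$ shows $(BTA)^{\#}_{|_{F_0^{*}}}\in\mathfrak A(F_0^{*},X_0^{\#})$, hence $BTA\in\mathfrak{A}^{L}_{dual}(X_0,F_0)$.

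The only point requiring a little care — and the step I expect to be the main obstacle — is making sure the two flanking maps in $A^{\#}T^{\#}_{|_{F^{*}}}B^{*}$ really are admissible \emph{linear} operators for axiom $\bf (OI_2)$, since $A$ itself is merely Lipschitz. The resolution is that the Lipschitz transpose $A^{\#}\colon X^{\#}\to X_0^{\#}$ of a Lipschitz map $A$ is always a \emph{bounded linear} operator with $\|A^{\#}\|=\Lip(A)$ (this is precisely why $T^{\#}_{|_{F^{*}}}$ takes values in a linear space and is studied as a linear object), so it is a legitimate right factor, while $B^{*}$ is the ordinary linear adjoint; thus $\bf (OI_2)$ applies verbatim. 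Combining the three verifications establishes that $\mathfrak{A}^{L}_{dual}$ satisfies all the defining axioms of a nonlinear ideal, completing the proof.
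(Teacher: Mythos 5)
Your proof is correct and follows essentially the same route as the paper: both verify the three axioms by applying Lemma~\ref{Auto244} to identify $(g\boxdot e)^{\#}_{|_{F^{*}}}=\hat{e}\otimes g$ and Lemma~\ref{Auto16} for additivity and the composition formula $(BTA)^{\#}_{|_{F_{0}^{*}}}=A^{\#}T^{\#}_{|_{F^{*}}}B^{*}$, then invoke the linear ideal axioms $\bf (OI_0)$--$\bf (OI_2)$ for $\mathfrak{A}$. Your added observation that $A^{\#}$ is a bounded \emph{linear} operator with $\left\|A^{\#}\right\|=\Lip(A)$, so that $\bf (OI_2)$ genuinely applies, makes explicit a point the paper leaves implicit.
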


\begin{proof}
The algebraic condition $\bf (\widetilde{PNOI_0})$ is satisfied, from Lemma \ref{Auto244} we obtain $\left(g\boxdot e\right)^{\#}_{|_{{F}^{*}}}=\hat{e}\otimes g\in\mathfrak{A}(F^{*}, X^{\#})$. To prove the algebraic condition $\bf (\widetilde{PNOI_1})$, let $T$ and $S$ in $\mathfrak{A}^{L}_{dual}(X,F)$. Let $T^{\#}_{|_{{F}^{*}}}$ and $S^{\#}_{|_{{F}^{*}}}$ in $\mathfrak{A}(F^{*}, X^{\#})$, from Lemma \ref {Auto16} we have $\left(T+S\right)^{\#}_{|_{{F}^{*}}}=T^{\#}_{|_{{F}^{*}}} + S^{\#}_{|_{{F}^{*}}}\in\mathfrak{A}(F^{*}, X^{\#})$. Let $A\in \Lip(X_{0},X)$, $T\in\mathfrak{A}^{L}_{dual}(X,F)$, and $B\in\mathfrak{L}(F,F_{0})$. Also from Lemma \ref {Auto16} we have $\left(BTA\right)^{\#}_{|_{F^{*}_{0}}}=A^{\#}T^{\#}_{|_{{F}^{*}}}B^{*}\in\mathfrak{A}(F_{0}^{*}, X_{0}^{\#})$, hence the algebraic condition $\bf (\widetilde{PNOI_2})$ is satisfied.  
\end{proof}

The following proposition is obvious.
\begin{prop}\label{tensor2}
The rule $$dual: \mathfrak{A}\longrightarrow \left(\mathfrak{A}\right)^{L}_{dual}$$
is a monotone Lipschitz procedure. 
\end{prop}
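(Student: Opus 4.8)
The plan is to verify the two requirements packaged in the statement: that $dual$ genuinely qualifies as a Lipschitz procedure, and that it satisfies the monotony property $\bf (M'')$. The first requirement is already in hand: Proposition \ref{Auto26} establishes that $\mathfrak{A}^{L}_{dual}$ is a nonlinear ideal for every ideal $\mathfrak{A}$, so the assignment $\mathfrak{A}\mapsto \mathfrak{A}^{L}_{dual}$ indeed defines a rule of the form $\text{new}:\mathfrak{A}\longrightarrow\mathfrak{A}^{L}_{new}$ in the sense of the definition above. Since the input of this rule is a linear ideal rather than a nonlinear one, the relevant monotony notion is $\bf (M'')$ (using $\mathfrak{A}\subseteq\mathfrak{B}$) rather than the strong monotony $\bf (M')$.

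For the monotony itself, I would simply unwind the defining condition of the dual ideal. Fix a metric space $X$ and a Banach space $F$, and suppose $\mathfrak{A}\subseteq\mathfrak{B}$, understood componentwise, i.e. $\mathfrak{A}(E,G)\subseteq\mathfrak{B}(E,G)$ for every pair of Banach spaces $E$ and $G$. Let $T\in\mathfrak{A}^{L}_{dual}(X,F)$. By the definition of the Lipschitz dual ideal this says precisely that the restricted transpose $T^{\#}_{|_{{F}^{*}}}$ lies in $\mathfrak{A}(F^{*}, X^{\#})$. Applying the componentwise inclusion to the pair $E=F^{*}$ and $G=X^{\#}$ yields $T^{\#}_{|_{{F}^{*}}}\in\mathfrak{B}(F^{*}, X^{\#})$, which is exactly the statement that $T\in\mathfrak{B}^{L}_{dual}(X,F)$. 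Since $X$ and $F$ were arbitrary, this gives $\mathfrak{A}^{L}_{dual}\subseteq\mathfrak{B}^{L}_{dual}$, establishing $\bf (M'')$.

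There is no genuine obstacle here, which is why the statement is flagged as obvious: the whole argument is a direct transfer of membership through the definition of the dual ideal, and it rests only on the fact that the inclusion $\mathfrak{A}\subseteq\mathfrak{B}$ is read componentwise. The single point worth keeping straight is the bookkeeping on which spaces carry the inclusion—one must invoke $\mathfrak{A}\subseteq\mathfrak{B}$ on the pair $(F^{*}, X^{\#})$, the domain and codomain of the transpose $T^{\#}_{|_{{F}^{*}}}$, and not on $(X,F)$ itself.
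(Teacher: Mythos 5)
Your proof is correct and is exactly the routine unwinding the paper intends when it declares the proposition ``obvious'' without proof: Proposition \ref{Auto26} supplies the fact that $\mathfrak{A}^{L}_{dual}$ is a nonlinear ideal, and monotony $\bf (M'')$ follows by transferring the componentwise inclusion $\mathfrak{A}(F^{*},X^{\#})\subseteq\mathfrak{B}(F^{*},X^{\#})$ through the defining condition on $T^{\#}_{|_{{F}^{*}}}$. Your remark about applying the inclusion to the pair $(F^{*},X^{\#})$ rather than $(X,F)$ is the right bookkeeping point.
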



\begin{prop}\label{peep98}
Let $\textfrak{F}^{L}$ be a nonlinear ideal of   Lipschitz finite rank operators, $\mathfrak{F}$ be an ideal of  finite rank operators and $(\mathfrak{F})^{L}_{dual}$ be a  semi-Lipschitz procedure. Then  $\textfrak{F}^{L}=(\mathfrak{F})^{L}_{dual}$.
\end{prop}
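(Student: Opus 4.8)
The plan is to prove the set equality $\textfrak{F}^{L}=(\mathfrak{F})^{L}_{dual}$ by verifying the two inclusions separately on each component. Unwinding the definitions, membership $T\in(\mathfrak{F})^{L}_{dual}(X,F)$ means precisely that the adjoint $T^{\#}_{|_{F^{*}}}\colon F^{*}\to X^{\#}$ is a finite rank linear operator, whereas $T\in\textfrak{F}^{L}(X,F)$ means that $T$ admits a representation $T=\sum_{j=1}^{m} g_{j}\boxdot e_{j}$ with $g_{j}\in X^{\#}$ and $e_{j}\in F$, so that $Tx=\sum_{j} g_{j}(x)\,e_{j}$.

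For the inclusion $\textfrak{F}^{L}\subseteq(\mathfrak{F})^{L}_{dual}$ I would take any $T=\sum_{j=1}^{m} g_{j}\boxdot e_{j}$ in $\textfrak{F}^{L}(X,F)$ and invoke Lemma \ref{Auto244} directly, which gives $T^{\#}_{|_{F^{*}}}=\sum_{j=1}^{m}\hat{e}_{j}\otimes g_{j}$. This is visibly a finite rank operator from $F^{*}$ into $X^{\#}$, so $T^{\#}_{|_{F^{*}}}\in\mathfrak{F}(F^{*},X^{\#})$ and hence $T\in(\mathfrak{F})^{L}_{dual}(X,F)$.

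The substantive direction is $(\mathfrak{F})^{L}_{dual}\subseteq\textfrak{F}^{L}$, where from the hypothesis that $T^{\#}_{|_{F^{*}}}$ is finite rank I must reconstruct an explicit finite rank Lipschitz representation of $T$. First I would write $T^{\#}_{|_{F^{*}}}b^{*}=\sum_{j=1}^{m}\langle\lambda_{j},b^{*}\rangle\, g_{j}$, taking the $g_{j}\in X^{\#}$ linearly independent (a basis of the range) and $\lambda_{j}\in F^{**}$. Recalling from the proof of Lemma \ref{Auto244} that $(T^{\#}_{|_{F^{*}}}b^{*})(x)=\langle b^{*},Tx\rangle$, this identity reads $\langle b^{*},Tx\rangle=\sum_{j}g_{j}(x)\langle\lambda_{j},b^{*}\rangle$ for all $x\in X$ and $b^{*}\in F^{*}$, i.e. $\widehat{Tx}=\sum_{j}g_{j}(x)\,\lambda_{j}$ in $F^{**}$. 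The key step is to upgrade the a priori bidual coefficients $\lambda_{j}$ to elements of $F$. To this end I would use that the evaluation functionals $g\mapsto g(x)$ separate points of the finite dimensional span of the $g_{j}$ (a nonzero Lipschitz function cannot vanish identically), so one may select $x_{1},\dots,x_{m}\in X$ making the matrix $\bigl(g_{j}(x_{i})\bigr)$ invertible. Evaluating $\widehat{Tx}=\sum_{j}g_{j}(x)\,\lambda_{j}$ at $x=x_{1},\dots,x_{m}$ and inverting this linear system expresses each coefficient as $\lambda_{j}=\sum_{i}c_{ji}\,\widehat{Tx_{i}}$, where $(c_{ji})$ is the inverse of $\bigl(g_{j}(x_{i})\bigr)$. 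Since every $\widehat{Tx_{i}}$ lies in $\hat{F}$, it follows that $\lambda_{j}=\hat{e}_{j}$ with $e_{j}:=\sum_{i}c_{ji}\,Tx_{i}\in F$. Substituting back gives $\widehat{Tx}=\widehat{\sum_{j}g_{j}(x)\,e_{j}}$, and injectivity of the canonical embedding $F\hookrightarrow F^{**}$ yields $Tx=\sum_{j}g_{j}(x)\,e_{j}$, that is $T=\sum_{j=1}^{m}g_{j}\boxdot e_{j}\in\textfrak{F}^{L}(X,F)$.

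The main obstacle is exactly the coefficient-lifting step: a generic finite rank operator $F^{*}\to X^{\#}$ is a sum of rank-one operators $\lambda_{j}\otimes g_{j}$ with $\lambda_{j}$ only in the bidual $F^{**}$, and nothing in the dual ideal hypothesis alone forces these to lie in $F$. The resolution hinges on the fact that $T$ itself maps into $F$, so that every $\widehat{Tx}$ already belongs to $\hat{F}$; the invertible evaluation matrix is the device that transfers this membership from the images $\widehat{Tx_{i}}$ to the coefficients $\lambda_{j}$. I expect the only technical care needed is the preliminary reduction to a linearly independent family $\{g_{j}\}$, which guarantees that suitable points $x_{1},\dots,x_{m}$ exist.
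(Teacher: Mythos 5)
Your proposal is correct, and on the reverse inclusion it is actually more careful than the paper's own argument. The forward inclusion $\textfrak{F}^{L}\subseteq(\mathfrak{F})^{L}_{dual}$ is identical in both: apply Lemma \ref{Auto244} to get $T^{\#}_{|_{F^{*}}}=\sum_{j}\hat{e}_{j}\otimes g_{j}$, which is finite rank. For the converse, the paper simply asserts that $T^{\#}_{|_{F^{*}}}\in\mathfrak{F}(F^{*},X^{\#})$ ``can be represented in the form $\sum_{j=1}^{m}\hat{e}_{j}\otimes g_{j}$'' and then runs the duality computation backwards. But a generic finite rank operator $F^{*}\to X^{\#}$ lies in $F^{**}\otimes X^{\#}$, so its rank-one coefficients are a priori only elements $\lambda_{j}\in F^{**}$; the paper gives no justification for why they land in $\hat{F}$. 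This is exactly the obstacle you isolate, and your resolution is sound: after reducing to linearly independent $g_{j}$, the point evaluations separate the finite-dimensional span (a nonzero element of $X^{\#}$ cannot vanish at every point of $X$), so points $x_{1},\dots,x_{m}$ with $\bigl(g_{j}(x_{i})\bigr)$ invertible exist; solving the system $\widehat{Tx_{i}}=\sum_{j}g_{j}(x_{i})\lambda_{j}$ expresses each $\lambda_{j}$ as a linear combination of the $\widehat{Tx_{i}}\in\hat{F}$, and injectivity of $F\hookrightarrow F^{**}$ finishes the identification $T=\sum_{j}g_{j}\boxdot e_{j}$. In short, your proof fills a genuine gap in the paper's proof of the inclusion $(\mathfrak{F})^{L}_{dual}\subseteq\textfrak{F}^{L}$; what it costs is the extra preliminary step of passing to a linearly independent family and choosing the evaluation points, which the paper's version silently avoids by assuming the conclusion of the lifting.
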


\begin{proof}
Let $T\in\textfrak{F}^{L}(X, F)$, then $T$ can be represented in the form $\sum\limits_{j=1}^{m} g_{j}\boxdot e_{j}$. From Lemma \ref{Auto244} and $E^{*}\otimes F\equiv\mathfrak{F}(E, F)$  we have $T^{\#}_{|_{{F}^{*}}}=\sum\limits_{j=1}^{m}\hat{e}_{j}\otimes g_{j}\in F^{**}\otimes X^{\#}\equiv\mathfrak{F}(F^{*}, X^{\#})$. Hence $T\in\mathfrak{F}^{L}_{dual}(X, F)$. 

Let $T\in\mathfrak{F}^{L}_{dual}(X, F)$ then $T^{\#}_{|_{{F}^{*}}}\in\mathfrak{F}(F^{*}, X^{\#})$ hence $T^{\#}_{|_{{F}^{*}}}$ can be represented in the form $\sum\limits_{j=1}^{m} \hat{e}_{j}\otimes g_{j}$. For  $b^{*}\in F^{*}$ and $x\in X$, we have
\begin{align}
\left\langle b^{*}, T x\right\rangle_{(F^{*},F)}&=\left\langle T^{\#}_{|_{{F}^{*}}} b^{*}, x\right\rangle_{(X^{\#}, X)}=\left\langle \sum\limits_{j=1}^{m} \hat{e}_{j}\otimes g_{j}\  (b^{*}), x\right\rangle_{(X^{\#}, X)}=\left\langle \sum\limits_{j=1}^{m} \hat{e}_{j}(b^{*})\cdot g_{j}\  , x\right\rangle_{(X^{\#}, X)}  \nonumber \\
&=\left\langle \sum\limits_{j=1}^{m} b^{*}({e}_{j})\cdot g_{j}\  , x\right\rangle_{(X^{\#}, X)}=\sum\limits_{j=1}^{m} g_{j}(x)\cdot b^{*}(e_{j})=\left\langle b^{*}, \sum\limits_{j=1}^{m} g_{j}\boxdot e_{j} \ (x)\right\rangle_{(F^{*},F)}. \nonumber
\end{align}
Hence $T=\sum\limits_{j=1}^{m} g_{j}\boxdot e_{j}\in\textfrak{F}^{L}(X, F)$.  
\end{proof}

\subsubsection{Injective Nonlinear Ideals} 
Let $\textfrak{A}^{L}$ be a nonlinear ideal. A Lipschitz operator $T\in \Lip(X,F)$ belongs to the injective hull $\textfrak{A}^{L}_{inj}$ if $J_{F}T\in\textfrak{A}^{L}(X, F^{inj})$.

\begin{prop}\label{Auto29}
$\textfrak{A}^{L}_{inj}$ is a nonlinear ideal.    
\end{prop}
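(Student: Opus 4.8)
The plan is to verify the three algebraic conditions $(\widetilde{PNOI_{0}})$, $(\widetilde{PNOI_{1}})$ and $(\widetilde{PNOI_{2}})$ directly from the defining criterion that $T\in\textfrak{A}^{L}_{inj}(X,F)$ precisely when $J_{F}T\in\textfrak{A}^{L}(X,F^{inj})$, reducing each assertion about $\textfrak{A}^{L}_{inj}$ to the corresponding one for $\textfrak{A}^{L}$, which is already a nonlinear ideal. The mechanism is exactly parallel to the treatment of $\mathfrak{A}^{L}_{dual}$ in Proposition \ref{Auto26}: one transports operators through the metric injection $J_{F}$ instead of through the Lipschitz transpose.

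For $(\widetilde{PNOI_{0}})$ I would first compute $J_{F}(g\boxdot e)$. Since $(g\boxdot e)(x)=g(x)\,e$ and $J_{F}$ is linear, one gets $J_{F}(g\boxdot e)=g\boxdot(J_{F}e)$, a rank-one Lipschitz operator with $J_{F}e\in F^{inj}$; by $(\widetilde{PNOI_{0}})$ for $\textfrak{A}^{L}$ this lies in $\textfrak{A}^{L}(X,F^{inj})$, so $g\boxdot e\in\textfrak{A}^{L}_{inj}(X,F)$. For $(\widetilde{PNOI_{1}})$, linearity of $J_{F}$ again gives $J_{F}(S+T)=J_{F}S+J_{F}T$; if $S,T\in\textfrak{A}^{L}_{inj}(X,F)$ then both summands lie in $\textfrak{A}^{L}(X,F^{inj})$, and $(\widetilde{PNOI_{1}})$ for $\textfrak{A}^{L}$ finishes the argument.

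The heart of the proof, and the only nontrivial step, is $(\widetilde{PNOI_{2}})$. The obstacle is that $J_{F_{0}}BTA$ is not literally of the form $\tilde{B}\,(J_{F}T)\,A$ with $\tilde{B}$ a bounded linear operator until one produces an operator $B^{inj}\in\mathfrak{L}(F^{inj},F_{0}^{inj})$ intertwining the metric injections, i.e. satisfying $J_{F_{0}}B=B^{inj}J_{F}$. I would construct $B^{inj}$ either abstractly, invoking the $1$-injectivity of $F_{0}^{inj}=\ell_{\infty}(B_{F_{0}^{*}})$ to extend the operator $J_{F_{0}}B\colon F\to F_{0}^{inj}$ along the isometric embedding $J_{F}\colon F\to F^{inj}$, or explicitly by setting, for $\xi=(\xi_{a})_{a\in B_{F^{*}}}$ and $b\in B_{F_{0}^{*}}$, $(B^{inj}\xi)_{b}:=\|B^{*}b\|\,\xi_{a_{b}}$, where $a_{b}:=B^{*}b/\|B^{*}b\|$ when $B^{*}b\neq 0$ and $(B^{inj}\xi)_{b}:=0$ otherwise; in both cases $\|B^{inj}\|\leq\|B\|$ since $\|B^{*}b\|\leq\|B\|$ for $b\in B_{F_{0}^{*}}$. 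A short verification on the generators $J_{F}x$ gives $(B^{inj}J_{F}x)_{b}=\|B^{*}b\|\,\langle x,a_{b}\rangle=\langle x,B^{*}b\rangle=\langle Bx,b\rangle=(J_{F_{0}}Bx)_{b}$, whence $J_{F_{0}}B=B^{inj}J_{F}$.

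With $B^{inj}$ in hand, the factorization $J_{F_{0}}(BTA)=(J_{F_{0}}B)TA=B^{inj}(J_{F}T)A$ exhibits $J_{F_{0}}(BTA)$ as a left-multiple by the bounded operator $B^{inj}$ and right-composition with $A\in\Lip(X_{0},X)$ of the operator $J_{F}T\in\textfrak{A}^{L}(X,F^{inj})$. Applying $(\widetilde{PNOI_{2}})$ for $\textfrak{A}^{L}$ yields $J_{F_{0}}(BTA)\in\textfrak{A}^{L}(X_{0},F_{0}^{inj})$, that is $BTA\in\textfrak{A}^{L}_{inj}(X_{0},F_{0})$, completing the verification. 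If in addition one equips $\textfrak{A}^{L}_{inj}$ with the candidate Lipschitz norm $\mathbf{A}^{L}_{inj}(T):=\mathbf{A}^{L}(J_{F}T)$, the same three factorizations together with $\|B^{inj}\|\leq\|B\|$ and the isometry of $J_{F}$ deliver the norm estimates in $(\widetilde{PNOI_{0}})$–$(\widetilde{PNOI_{2}})$, and completeness of the components transfers from that of $\textfrak{A}^{L}(X,F^{inj})$ via the isometric identification of $\textfrak{A}^{L}_{inj}(X,F)$ with the closed subspace $\{\,J_{F}T:T\in\textfrak{A}^{L}_{inj}(X,F)\,\}$ of $\textfrak{A}^{L}(X,F^{inj})$.
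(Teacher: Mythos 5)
Your proposal is correct and follows essentially the same route as the paper: the decisive step in both is to use the extension property ($1$-injectivity) of $F_{0}^{inj}=\ell_{\infty}(B_{F_{0}^{*}})$ to produce $B^{inj}\in\mathfrak{L}(F^{inj},F_{0}^{inj})$ with $J_{F_{0}}B=B^{inj}J_{F}$, so that $J_{F_{0}}(BTA)=B^{inj}(J_{F}T)A$ and $(\widetilde{PNOI_{2}})$ for $\textfrak{A}^{L}$ applies. Your explicit formula for $B^{inj}$ and the identity $J_{F}(g\boxdot e)=g\boxdot(J_{F}e)$ are harmless variants of the paper's appeals to abstract injectivity and to the composition property, respectively.
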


\begin{proof}
The algebraic condition $\bf (\widetilde{PNOI_0})$ is satisfied, since $g\boxdot e\in\textfrak{A}^{L}(X, F)$ and using nonlinear composition ideal property we have $J_{F}(g\boxdot e)\in\textfrak{A}^{L}(X, F^{inj})$. To prove the algebraic condition $\bf (\widetilde{PNOI_1})$, let $T$ and $S$ in $\textfrak{A}^{L}_{inj}(X,F)$. Then $J_{F}T$ and $J_{F}S$ in $\textfrak{A}^{L}(X, F^{inj})$, we have $J_{F}(T+S)=J_{F}T + J_{F}S\in\textfrak{A}^{L}(X, F^{inj})$. Let $A\in \Lip(X_{0},X)$, $T\in\textfrak{A}^{L}_{inj}(X, F)$, and $B\in\mathfrak{L}(F,F_{0})$. Since $F_{0}^{inj}$ has the extension property, there exists $B^{inj}\in\mathfrak{L}(F^{inj},F_{0}^{inj})$ such that 
$$
\begin{tikzcd}[row sep=5.0em, column sep=5.0em]
X   \arrow{r}{T}                & F   \arrow{r}{J_F} \arrow{d}{B} & F^{inj} \arrow{d}{B^{inj}} \\
X_0 \arrow{r}{BTA} \arrow{u}{A} & F_0 \arrow{r}{J_{F_0}}          & F_0^{inj}                  \\
\end{tikzcd}
\vspace{-50pt}
$$
Consequently $J_{F_{0}}\left(BTA\right)=B^{inj}\left(J_{F}T\right)A\in\textfrak{A}^{L}$, hence the algebraic condition $\bf (\widetilde{PNOI_2})$ is satisfied.  
\end{proof}

\begin{prop} 
The rule $$\text{inj}: \textfrak{A}^{L}\longrightarrow\textfrak{A}^{L}_{inj}$$
is a hull Lipschitz procedure. 
\end{prop}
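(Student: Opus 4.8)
The plan is to verify, directly from the Remark preceding Subsection \ref{Auto1818}, the three defining features of a hull Lipschitz procedure: strong monotony $\bf (M')$, idempotence $\bf (I)$, and the inflation $\textfrak{A}^{L}\subseteq\textfrak{A}^{L}_{inj}$. That $\text{inj}$ is a bona fide Lipschitz procedure is already guaranteed by Proposition \ref{Auto29}, which shows $\textfrak{A}^{L}_{inj}$ is a nonlinear ideal for every nonlinear ideal $\textfrak{A}^{L}$, so only these three points remain. For the inflation I would use that $J_{F}$ is a metric injection of $F$ into $F^{inj}$, hence in particular $J_{F}\in\mathfrak{L}(F,F^{inj})$. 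Given $T\in\textfrak{A}^{L}(X,F)$, I would write $J_{F}T=J_{F}\circ T\circ I_{X}$ and apply $\bf (\widetilde{PNOI_2})$ with $B=J_{F}$ and $A=I_{X}$, obtaining $J_{F}T\in\textfrak{A}^{L}(X,F^{inj})$, which is precisely the membership $T\in\textfrak{A}^{L}_{inj}(X,F)$. Strong monotony is then immediate: if $\textfrak{A}^{L}\subseteq\textfrak{B}^{L}$ and $T\in\textfrak{A}^{L}_{inj}(X,F)$, then $J_{F}T\in\textfrak{A}^{L}(X,F^{inj})\subseteq\textfrak{B}^{L}(X,F^{inj})$, so $T\in\textfrak{B}^{L}_{inj}(X,F)$.

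The remaining and substantive point is idempotence, $\left(\textfrak{A}^{L}_{inj}\right)_{inj}=\textfrak{A}^{L}_{inj}$. One inclusion, $\textfrak{A}^{L}_{inj}\subseteq\left(\textfrak{A}^{L}_{inj}\right)_{inj}$, I would deduce at once by applying the inflation already established to the nonlinear ideal $\textfrak{A}^{L}_{inj}$ in place of $\textfrak{A}^{L}$. For the reverse inclusion, let $T\in\left(\textfrak{A}^{L}_{inj}\right)_{inj}(X,F)$; unwinding the definitions, this means $J_{F}T\in\textfrak{A}^{L}_{inj}(X,F^{inj})$, i.e. $J_{F^{inj}}\!\left(J_{F}T\right)\in\textfrak{A}^{L}\!\left(X,(F^{inj})^{inj}\right)$, and the goal is to descend to $J_{F}T\in\textfrak{A}^{L}(X,F^{inj})$.

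The crux is to manufacture a bounded linear left inverse of the metric injection $J_{F^{inj}}:F^{inj}\to(F^{inj})^{inj}$. Here I would invoke that $F^{inj}=\ell_{\infty}(B_{F^{*}})$ has the extension property, exactly as used in Proposition \ref{Auto29}: applying this property to the identity $I_{F^{inj}}:F^{inj}\to F^{inj}$ along the isometric embedding $J_{F^{inj}}$ produces $Q\in\mathfrak{L}\!\left((F^{inj})^{inj},F^{inj}\right)$ with $Q\,J_{F^{inj}}=I_{F^{inj}}$ and $\left\|Q\right\|=1$. Applying $\bf (\widetilde{PNOI_2})$ with $B=Q$ then collapses the factor, since
$$Q\,J_{F^{inj}}\!\left(J_{F}T\right)=\left(Q\,J_{F^{inj}}\right)\!\left(J_{F}T\right)=J_{F}T\in\textfrak{A}^{L}(X,F^{inj}),$$
whence $T\in\textfrak{A}^{L}_{inj}(X,F)$. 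Combining the two inclusions gives idempotence, and together with the inflation and strong monotony this shows $\text{inj}$ is a hull Lipschitz procedure. I expect the only delicate step to be the construction of $Q$, which rests on the $1$-injectivity of $\ell_{\infty}$-spaces rather than on the ideal axioms; everything else is a routine application of $\bf (\widetilde{PNOI_2})$ and the definition of the injective hull.
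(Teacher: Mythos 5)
Your proposal is correct and follows essentially the same route as the paper: the paper's proof also reduces idempotence to the fact that $F^{inj}$ has the extension property, citing Lemma \ref{Auto21} (whose proof constructs exactly the left inverse of the metric injection that you build explicitly as $Q$), and dismisses strong monotony and the inclusion $\textfrak{A}^{L}_{inj}\subseteq\left(\textfrak{A}^{L}_{inj}\right)_{inj}$ as obvious. The only difference is presentational — you inline the lemma and additionally spell out the inflation $\textfrak{A}^{L}\subseteq\textfrak{A}^{L}_{inj}$, which the paper leaves implicit.
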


\begin{proof}
The property $\bf (M')$ is obvious. To show the idempotence, let $T\in \Lip(X,F)$ belong to $\left(\textfrak{A}^{L}_{inj}\right)_{inj}$. Then $J_{F}T\in\textfrak{A}^{L}_{inj}(X, F^{inj})$, and the preceding lemma implies $J_{F}T\in\textfrak{A}^{L}(X, F^{inj})$. Consequently $T\in\textfrak{A}^{L}_{inj}(X, F)$. Thus $\left(\textfrak{A}^{L}_{inj}\right)_{inj}\subseteq\textfrak{A}^{L}_{inj}$. The converse inclusion is trivial.  
\end{proof}

\begin{lemma}\label{Auto21}
Let $F$ be a Banach space possessing the extension property. Then $\textfrak{A}^{L}(X, F)=\textfrak{A}^{L}_{inj}(X, F)$.
\end{lemma}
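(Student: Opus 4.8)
The plan is to prove the two inclusions separately, observing that one of them is automatic from the ideal axioms while the other is precisely where the hypothesis on $F$ enters. This mirrors the diagram argument already used in the proof of Proposition \ref{Auto29}, where the extension property of $F_{0}^{inj}$ was invoked to produce $B^{inj}$.

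First I would record the trivial inclusion $\textfrak{A}^{L}(X, F)\subseteq\textfrak{A}^{L}_{inj}(X, F)$, which holds for every Banach space $F$. If $T\in\textfrak{A}^{L}(X, F)$, then since $J_{F}\in\mathfrak{L}(F, F^{inj})$, the composition axiom $\bf (\widetilde{PNOI_2})$ applied with $B=J_{F}$ and $A=I_{X}$ gives $J_{F}T=J_{F}\,T\,I_{X}\in\textfrak{A}^{L}(X, F^{inj})$, which is exactly the defining condition for $T\in\textfrak{A}^{L}_{inj}(X, F)$. No property of $F$ is needed here.

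The substantive direction is $\textfrak{A}^{L}_{inj}(X, F)\subseteq\textfrak{A}^{L}(X, F)$, and here I would exploit that $F$ has the extension property. Recall that $J_{F}\colon F\to F^{inj}$ is a metric (isometric) injection. Because $F$ enjoys the extension property, the identity $I_{F}\colon F\to F$, viewed as an operator from the subspace $J_{F}(F)\cong F$ of $F^{inj}$ into $F$, extends across $J_{F}$: there is a norm-one operator $P\in\mathfrak{L}(F^{inj}, F)$ with $P\,J_{F}=I_{F}$, i.e. a left inverse of $J_{F}$. Now take $T\in\textfrak{A}^{L}_{inj}(X, F)$, so that by definition $J_{F}T\in\textfrak{A}^{L}(X, F^{inj})$. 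Then
$$T=I_{F}\,T=(P\,J_{F})\,T=P\,(J_{F}T),$$
and applying $\bf (\widetilde{PNOI_2})$ once more, with $B=P\in\mathfrak{L}(F^{inj}, F)$, $A=I_{X}$, and the member $J_{F}T\in\textfrak{A}^{L}(X, F^{inj})$, yields $T=P\,(J_{F}T)\in\textfrak{A}^{L}(X, F)$. Combining the two inclusions gives the asserted equality.

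The single piece of genuine content is the production of the left inverse $P$ from the extension property; every other step is a direct invocation of the ideal axioms. Thus the main point to get right is the correct reading of the extension property as ``operators into $F$ extend across isometric embeddings,'' which is exactly what licenses extending $I_{F}$ along $J_{F}$ and furnishes $P$.
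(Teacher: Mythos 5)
Your proof is correct and is essentially identical to the paper's: the paper likewise uses the extension property of $F$ to produce a left inverse $B\in\mathfrak{L}(F^{inj},F)$ of $J_{F}$ and then writes $T=B(J_{F}T)\in\textfrak{A}^{L}(X,F)$ via the composition axiom, noting the converse inclusion is obvious. Your version merely spells out the trivial inclusion in more detail.
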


\begin{proof}
By hypothesis there exists $B\in\mathfrak{L}(F^{inj},F)$ such that $BJ_{F}=I_{F}$. Therefore $T\in\textfrak{A}^{L}_{inj}(X, F)$ implies that $T=B\left(J_{F}T\right)\in\textfrak{A}^{L}(X, F)$. This proves that $\textfrak{A}^{L}_{inj}\subseteq\textfrak{A}^{L}$. The converse inclusion is obvious.  
\end{proof}

\begin{prop}\label{Auto 14}
The rule $$inj: \textfrak{A}^{L}\longrightarrow\textfrak{A}^{L}_{inj}$$
is a hull Lipschitz procedure. 
\end{prop}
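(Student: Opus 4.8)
The plan is to verify the three defining requirements of a Lipschitz hull procedure for the rule $inj$: strong monotony $\bf (M')$, idempotence $\bf (I)$, and the extensivity $\textfrak{A}^{L}\subseteq\textfrak{A}^{L}_{inj}$. That $inj$ is a Lipschitz procedure at all, i.e.\ that $\textfrak{A}^{L}_{inj}$ is again a nonlinear ideal, is already furnished by Proposition \ref{Auto29}, so no further work is needed on that point.

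I would first dispose of the two easy properties. For strong monotony, assume $\textfrak{A}^{L}\subseteq\textfrak{B}^{L}$ and pick $T\in\textfrak{A}^{L}_{inj}(X,F)$; by definition of the injective hull $J_{F}T\in\textfrak{A}^{L}(X,F^{inj})\subseteq\textfrak{B}^{L}(X,F^{inj})$, whence $T\in\textfrak{B}^{L}_{inj}(X,F)$ and therefore $\textfrak{A}^{L}_{inj}\subseteq\textfrak{B}^{L}_{inj}$. For extensivity, take $T\in\textfrak{A}^{L}(X,F)$ and write $J_{F}T=J_{F}\,T\,I_{X}$ with $J_{F}\in\mathfrak{L}(F,F^{inj})$ and $I_{X}\in\Lip(X,X)$; the ideal property $\bf (\widetilde{PNOI_2})$ then gives $J_{F}T\in\textfrak{A}^{L}(X,F^{inj})$, which is exactly $T\in\textfrak{A}^{L}_{inj}(X,F)$. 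Hence $\textfrak{A}^{L}\subseteq\textfrak{A}^{L}_{inj}$.

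The substantive step is the idempotence $\left(\textfrak{A}^{L}_{inj}\right)_{inj}=\textfrak{A}^{L}_{inj}$. The inclusion $\textfrak{A}^{L}_{inj}\subseteq\left(\textfrak{A}^{L}_{inj}\right)_{inj}$ comes for free by applying the extensivity just proved to the ideal $\textfrak{A}^{L}_{inj}$. For the reverse inclusion I would argue as follows: let $T\in\left(\textfrak{A}^{L}_{inj}\right)_{inj}(X,F)$, so that $J_{F}T\in\textfrak{A}^{L}_{inj}(X,F^{inj})$ by definition. The crux is the observation that $F^{inj}=\ell_{\infty}(B_{F^{*}})$ possesses the extension property, so that Lemma \ref{Auto21}, applied with the Banach space $F^{inj}$ in the role of $F$, yields $\textfrak{A}^{L}(X,F^{inj})=\textfrak{A}^{L}_{inj}(X,F^{inj})$. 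Consequently $J_{F}T\in\textfrak{A}^{L}(X,F^{inj})$, which is precisely the statement $T\in\textfrak{A}^{L}_{inj}(X,F)$; this establishes $\left(\textfrak{A}^{L}_{inj}\right)_{inj}\subseteq\textfrak{A}^{L}_{inj}$ and closes the idempotence.

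I expect the only real obstacle to be the idempotence, and within it the single point that makes the argument work is recognizing that $F^{inj}=\ell_{\infty}(B_{F^{*}})$ is $1$-injective and hence enjoys the extension property; once this is noted, the reduction to Lemma \ref{Auto21} is immediate, and the remaining verifications are a direct unravelling of the definition of the injective hull together with the ideal axioms.
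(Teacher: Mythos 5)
Your proposal is correct and follows essentially the same route as the paper: the paper likewise dispatches $\bf (M')$ and the extensivity as immediate, and proves the nontrivial inclusion $\left(\textfrak{A}^{L}_{inj}\right)_{inj}\subseteq\textfrak{A}^{L}_{inj}$ by applying Lemma \ref{Auto21} to $F^{inj}$, exactly as you do. Your write-up is in fact slightly more explicit than the paper's (which leaves the monotony and extensivity steps unverified), but the key idea --- that $F^{inj}=\ell_{\infty}(B_{F^{*}})$ has the extension property, so the lemma collapses $\textfrak{A}^{L}_{inj}(X,F^{inj})$ to $\textfrak{A}^{L}(X,F^{inj})$ --- is identical.
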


\begin{proof}
The property $\bf (M')$ is obvious. To show the idempotence, let $T\in \Lip(X,F)$ belong to $\left(\textfrak{A}^{L}_{inj}\right)_{inj}$. Then $J_{F}T\in\textfrak{A}^{L}_{inj}(X, F^{inj})$, and the preceding lemma implies $J_{F}T\in\textfrak{A}^{L}(X, F^{inj})$. Consequently $T\in\textfrak{A}^{L}_{inj}(X, F)$. Thus $\left(\textfrak{A}^{L}_{inj}\right)_{inj}\subseteq\textfrak{A}^{L}_{inj}$. The converse inclusion is trivial.  
\end{proof}
 
\subsection{Minimal Nonlinear Ideals}\label{Auto3030}
Let $\mathfrak{A}$ be an ideal. A Lipschitz operator $T\in \Lip(X,F)$ belongs to the associated minimal ideal $(\mathfrak{A})^{L}_{min}$ if $T=BT_{0}A$, where $B\in\mathfrak{G}(F_{0}, F)$, $T_{0}\in\mathfrak{A}(G_{0}, F_{0})$, and $A\in\textfrak{G}^{L}(X, G_{0})$. In the other words $(\mathfrak{A})^{L}_{min}:=\mathfrak{G}\circ\mathfrak{A}\circ\textfrak{G}^{L}$, where $\mathfrak{G}$ be an ideal of approximable operators between arbitrary Banach spaces.

\begin{prop}\label{Auto30}                                          
$(\mathfrak{A})^{L}_{min}$ is a nonlinear ideal. 
\end{prop}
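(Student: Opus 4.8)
The plan is to verify the three algebraic axioms $\bf (\widetilde{PNOI_0})$--$\bf (\widetilde{PNOI_2})$ of Definition \ref{A7777} directly from the factorization $(\mathfrak{A})^{L}_{min}=\mathfrak{G}\circ\mathfrak{A}\circ\textfrak{G}^{L}$, just as in the proofs of Propositions \ref{Auto26} and \ref{Auto29}. The only inputs are that $\mathfrak{G}$ and $\mathfrak{A}$ are linear operator ideals satisfying $\bf (OI_0)$--$\bf (OI_2)$, while $\textfrak{G}^{L}$ is a nonlinear ideal. Throughout I read a factorization $T=BT_{0}A\in(\mathfrak{A})^{L}_{min}(X,F)$ from right to left, with $A\in\textfrak{G}^{L}(X,G_{0})$, $T_{0}\in\mathfrak{A}(G_{0},H_{0})$, and $B\in\mathfrak{G}(H_{0},F)$.

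For $\bf (\widetilde{PNOI_0})$ I would factor the elementary Lipschitz operator $g\boxdot e$ (with $g\in X^{\#}$, $e\in F$) through the scalar field, writing $g\boxdot e=(1\otimes e)\circ \mathrm{id}_{\mathbb{K}}\circ g$. Here $g=g\boxdot 1$ is a Lipschitz finite rank operator, so $g\in\textfrak{F}^{L}(X,\mathbb{K})\subseteq\textfrak{G}^{L}(X,\mathbb{K})$; the scalar identity $\mathrm{id}_{\mathbb{K}}=1\otimes 1$ lies in $\mathfrak{A}(\mathbb{K},\mathbb{K})$ by $\bf (OI_0)$; and $1\otimes e$ is a rank one linear operator, hence approximable, so $1\otimes e\in\mathfrak{G}(\mathbb{K},F)$. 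This displays $g\boxdot e$ as an admissible factorization, so $g\boxdot e\in(\mathfrak{A})^{L}_{min}(X,F)$.

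For $\bf (\widetilde{PNOI_2})$ I would simply reassociate. Given $L\in\Lip(X_{0},X)$, $T=BT_{0}A\in(\mathfrak{A})^{L}_{min}(X,F)$, and $R\in\mathfrak{L}(F,F_{0})$, write $RTL=(RB)\,T_{0}\,(AL)$. Then $AL=I_{G_{0}}\circ A\circ L\in\textfrak{G}^{L}(X_{0},G_{0})$ by $\bf (\widetilde{PNOI_2})$ applied to $\textfrak{G}^{L}$, the middle factor $T_{0}\in\mathfrak{A}(G_{0},H_{0})$ is unchanged, and $RB=R\circ B\circ I_{H_{0}}\in\mathfrak{G}(H_{0},F_{0})$ by $\bf (OI_2)$ applied to $\mathfrak{G}$; hence $RTL\in(\mathfrak{A})^{L}_{min}(X_{0},F_{0})$.

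The main work is $\bf (\widetilde{PNOI_1})$, the closure under sums, which calls for the standard direct sum device. Given $S=B_{1}S_{0}A_{1}$ and $T=B_{2}T_{0}A_{2}$ in $(\mathfrak{A})^{L}_{min}(X,F)$, with intermediate spaces $G_{1},H_{1}$ and $G_{2},H_{2}$, I would set $A:=\iota_{1}A_{1}+\iota_{2}A_{2}\colon X\to G_{1}\oplus G_{2}$, the diagonal middle operator $S_{0}\oplus T_{0}\colon G_{1}\oplus G_{2}\to H_{1}\oplus H_{2}$, and $B:=B_{1}\pi_{1}+B_{2}\pi_{2}\colon H_{1}\oplus H_{2}\to F$, where $\iota_{i}$ and $\pi_{i}$ denote the canonical isometric inclusions and coordinate projections of the respective $\ell_{\infty}$ direct sums. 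A direct check gives $B(S_{0}\oplus T_{0})A=S+T$, and membership of each factor follows from the axioms: $A\in\textfrak{G}^{L}$ because each $\iota_{i}A_{i}\in\textfrak{G}^{L}$ by $\bf (\widetilde{PNOI_2})$ and the sum lies in $\textfrak{G}^{L}$ by $\bf (\widetilde{PNOI_1})$; the diagonal $S_{0}\oplus T_{0}=\iota_{1}S_{0}\pi_{1}+\iota_{2}T_{0}\pi_{2}$ lies in $\mathfrak{A}$ by $\bf (OI_2)$ and $\bf (OI_1)$; and $B\in\mathfrak{G}$ by the same two axioms for $\mathfrak{G}$. I expect this bookkeeping — routing each $A_{i}$, each $S_{0},T_{0}$, and each $B_{i}$ into the correct summand via the inclusions and projections — to be the one genuinely fiddly point, though it is conceptually routine once the direct sum factorization is written down.
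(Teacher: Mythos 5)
Your proposal is correct and follows essentially the same route as the paper: the factorization of $g\boxdot e$ through the scalar field for $\bf (\widetilde{PNOI_0})$, the direct-sum device with canonical injections and projections for $\bf (\widetilde{PNOI_1})$ (the paper's $J_i$, $Q_i$, $\tilde J_i$, $\tilde Q_i$ are exactly your $\iota_i$, $\pi_i$), and reassociation via the linear and nonlinear composition properties for $\bf (\widetilde{PNOI_2})$. The only cosmetic difference is that you fix the $\ell_\infty$ norm on the direct sums, which the paper leaves unspecified and which is immaterial for the purely algebraic claim.
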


\begin{proof}
The algebraic condition $\bf (\widetilde{PNOI_0})$ is satisfied, since the elementary Lipschitz tensor $g\boxdot e$ admits a factorization $$g\boxdot e : X\stackrel{g\boxdot 1}{\longrightarrow} \mathbb{K}\stackrel{1\otimes 1}{\longrightarrow}\mathbb{K}\stackrel{1\otimes e}{\longrightarrow} F,$$
where $1\otimes e\in\mathfrak{G}\left(\mathbb{K}, F\right)$, $1\otimes 1\in\mathfrak{A}\left(\mathbb{K}, \mathbb{K}\right)$, and $g\boxdot 1\in\textfrak{G}^{L}\left(X, \mathbb{K}\right)$. To prove the algebraic condition $\bf (\widetilde{PNOI_1})$, let $T_{i}\in\mathfrak{G}\circ\mathfrak{A}\circ\textfrak{G}^{L}(X, F)$. Then $T_{i}=B_{i}T_{0}^{i} A_{i}$, where $B_{i}\in\mathfrak{G}(F_{0}^{i}, F)$, $T_{0}^{i}\in\mathfrak{A}(G_{0}^{i}, F_{0}^{i})$, and $A_{i}\in\textfrak{G}^{L}(X, G_{0}^{i})$. Put $B:=B_{1}\circ Q_{1} + B_{2}\circ Q_{2}$, $T_{0}:=\tilde{J}_{1}\circ T_{0}^{1}\circ \tilde{Q}_{1} + \tilde{J}_{2}\circ T_{0}^{2}\circ \tilde{Q}_{2}$, and $A:=J_{1}\circ A_{1} + J_{2}\circ A_{2}$. Now $T_{1} + T_{2}= B\circ T_{0}\circ A$, $B\in\mathfrak{G}(F_{0}, F)$, $T_{0}\in\mathfrak{A}(G_{0}, F_{0})$, and $A\in\textfrak{G}^{L}(X, G_{0})$ imply $T_{1} + T_{2}\in\mathfrak{G}\circ\mathfrak{A}\circ\textfrak{G}^{L}(X, F)$. Let $A\in \Lip(X_{0},X)$, $T\in\mathfrak{G}\circ\mathfrak{A}\circ\textfrak{G}^{L}(X, F)$, and $B\in\mathfrak{L}(F,R_{0})$. Then $T$ admits a factorization 
$$T: X\stackrel{\widetilde{A}}{\longrightarrow} G_{0}\stackrel{T_{0}}{\longrightarrow} F_{0}\stackrel{\widetilde{B}}{\longrightarrow} F,$$
where $\widetilde{B}\in\mathfrak{G}(F_{0}, F)$, $T_{0}\in\mathfrak{A}(G_{0}, F_{0})$, and $\widetilde{A}\in\textfrak{G}^{L}(X, G_{0})$. To show that $BTA\in\mathfrak{G}\circ\mathfrak{A}\circ\textfrak{G}^{L}(X_{0},R_{0})$. By using the linear and nonlinear composition ideal properties, we obtain $B\circ\widetilde{B}\in\mathfrak{G}\left(F_{0}, R_{0}\right)$ and $\widetilde{A}\circ A\in\textfrak{G}^{L}\left(X_{0}, G_{0}\right)$. Hence the Lipschitz operator $BTA$ admits a factorization 
$$BTA: X_{0}\stackrel{\widetilde{\widetilde{A\,}}}{\longrightarrow} G_{0}\stackrel{T_{0}}{\longrightarrow} F_{0}\stackrel{\widetilde{\widetilde{B\,}}}{\longrightarrow} R_{0},$$
where $\widetilde{\widetilde{B\,}}=B\circ\widetilde{B}$ and $\widetilde{\widetilde{A\,}}=\widetilde{A}\circ A$, hence the algebraic condition $\bf (\widetilde{PNOI_2})$ is satisfied.  
\end{proof}

\begin{prop}\label{Auto31}
The rule $$min: \mathfrak{A}\longrightarrow(\mathfrak{A})^{L}_{min}$$
is a monotone Lipschitz procedure. 
\end{prop}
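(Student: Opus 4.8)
The plan is to unpack the two requirements hidden in the phrase ``monotone Lipschitz procedure'': first, that the rule $\min$ genuinely assigns a nonlinear ideal to each ideal $\mathfrak{A}$, and second, that it respects inclusions in the sense of property $\bf (M'')$. The first requirement is already settled by Proposition \ref{Auto30}, which asserts that $(\mathfrak{A})^{L}_{min}=\mathfrak{G}\circ\mathfrak{A}\circ\textfrak{G}^{L}$ is a nonlinear ideal; hence $\min$ is a well-defined Lipschitz semi-procedure, and only the monotony remains to be checked.

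For the monotony I would argue directly from the factorization definition of the minimal ideal. Suppose $\mathfrak{A}\subseteq\mathfrak{B}$, i.e. $\mathfrak{A}(E,F)\subseteq\mathfrak{B}(E,F)$ for every pair of Banach spaces $E,F$. Take an arbitrary $T\in(\mathfrak{A})^{L}_{min}(X,F)$ and use its defining factorization $T=BT_{0}A$ with $B\in\mathfrak{G}(F_{0},F)$, $T_{0}\in\mathfrak{A}(G_{0},F_{0})$, and $A\in\textfrak{G}^{L}(X,G_{0})$. Since only the middle factor is drawn from $\mathfrak{A}$, the hypothesis upgrades it to $T_{0}\in\mathfrak{B}(G_{0},F_{0})$, while $B$ and $A$ stay exactly as they were. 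Thus the very same factorization now exhibits $T=BT_{0}A$ with $B\in\mathfrak{G}(F_{0},F)$, $T_{0}\in\mathfrak{B}(G_{0},F_{0})$, $A\in\textfrak{G}^{L}(X,G_{0})$, so $T\in\mathfrak{G}\circ\mathfrak{B}\circ\textfrak{G}^{L}(X,F)=(\mathfrak{B})^{L}_{min}(X,F)$. As $T$, $X$, and $F$ were arbitrary, this gives $(\mathfrak{A})^{L}_{min}\subseteq(\mathfrak{B})^{L}_{min}$, which is precisely $\bf (M'')$.

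I do not anticipate any genuine obstacle here: because $(\mathfrak{A})^{L}_{min}$ is built as a triple composition in which $\mathfrak{A}$ occupies only the inner slot, monotony of the composition operation transfers any inclusion of ideals automatically. The only points to keep honest are that the outer factors $\mathfrak{G}$ and $\textfrak{G}^{L}$ are fixed and independent of the choice between $\mathfrak{A}$ and $\mathfrak{B}$, and that the intermediate spaces $G_{0},F_{0}$ witnessing membership need not be altered --- the same witnesses serve both ideals. Consequently the statement follows with no estimates whatsoever.
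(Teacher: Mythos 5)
Your proof is correct. The paper in fact states Proposition \ref{Auto31} without any proof (treating it, like Proposition \ref{tensor2}, as evident), and your argument is exactly the intended one: since $\mathfrak{A}$ occupies only the middle slot of the composition $\mathfrak{G}\circ\mathfrak{A}\circ\textfrak{G}^{L}$, an inclusion $\mathfrak{A}\subseteq\mathfrak{B}$ upgrades the middle factor of any witnessing factorization $T=BT_{0}A$ without touching $B$, $A$, or the intermediate spaces, giving $(\mathfrak{A})^{L}_{min}\subseteq(\mathfrak{B})^{L}_{min}$; your remark that the rule is, strictly in the paper's own terminology, a Lipschitz \emph{semi}-procedure (its domain being linear ideals) and that the relevant property is therefore $\bf (M'')$ is a point the paper itself glosses over.
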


\begin{remark}\label{remmin}

\begin{enumerate}
  \item[$\bf (1)$] It is evident $(\mathfrak{A})^{L}_{min}\subseteq\textfrak{G}^{L}$.
	\item[$\bf (2)$] If $\textfrak{A}^{L}$ is a closed nonlinear ideal, then $(\mathfrak{A})^{L}_{min}\subseteq\textfrak{A}^{L}$.
\end{enumerate}
\end{remark}

\begin{prop}\label{propmin}
 Let  $\mathfrak{A}$ be a closed ideal. Then $(\mathfrak{A})^{L}_{min}=\textfrak{G}^{L}$. 
 In particular, the linear and nonlinear ideals of approximable  operators are related by $(\mathfrak{G})^{L}_{min} = \textfrak{G}^{L}$. 
\end{prop}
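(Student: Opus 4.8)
The plan is to prove $(\mathfrak{A})^{L}_{min} = \textfrak{G}^{L}$ by establishing the two inclusions separately, exploiting the factorization definition $(\mathfrak{A})^{L}_{min} = \mathfrak{G}\circ\mathfrak{A}\circ\textfrak{G}^{L}$ together with the hypothesis that $\mathfrak{A}$ is closed. The inclusion $(\mathfrak{A})^{L}_{min}\subseteq\textfrak{G}^{L}$ is already recorded in Remark \ref{remmin}$\,\bf(1)$, so the substantive work lies entirely in the reverse inclusion $\textfrak{G}^{L}\subseteq(\mathfrak{A})^{L}_{min}$.

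\medskip

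First I would take an arbitrary $T\in\textfrak{G}^{L}(X,F)$ and seek a factorization $T = B T_{0} A$ with $B\in\mathfrak{G}(F_{0},F)$, $T_{0}\in\mathfrak{A}(G_{0},F_{0})$, and $A\in\textfrak{G}^{L}(X,G_{0})$. The key observation is that a closed linear ideal $\mathfrak{A}$ contains the identity maps on the scalar field and, more importantly, absorbs the approximable ideal: since $\mathfrak{G}$ is the smallest closed ideal (the linear analogue of the Proposition on $\textfrak{G}^{L}$ proved just above via $\textfrak{G}^{L}=\textfrak{F}^{L}_{clos}$), closedness of $\mathfrak{A}$ forces $\mathfrak{G}\subseteq\mathfrak{A}$. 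Using $\mathfrak{G}\subseteq\mathfrak{A}$, I would route $T$ through the trivial factorization $T : X \xrightarrow{\,T\,} F \xrightarrow{\,I_{F}\,} F \xrightarrow{\,I_{F}\,} F$, reading the first arrow as an element of $\textfrak{G}^{L}(X,F)$, and then interpret the middle identity as lying in $\mathfrak{A}(F,F)$. More carefully, since $T\in\textfrak{G}^{L}$ is Lipschitz approximable it is itself a $\textfrak{G}^{L}$-operator, so writing $T = I_{F}\circ I_{F}\circ T$ with $I_{F}\in\mathfrak{G}(F,F)\subseteq\mathfrak{A}(F,F)$ places $T$ in $\mathfrak{G}\circ\mathfrak{A}\circ\textfrak{G}^{L}(X,F)=(\mathfrak{A})^{L}_{min}(X,F)$.

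\medskip

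The main obstacle will be justifying the embedding $\mathfrak{G}\subseteq\mathfrak{A}$ cleanly in the linear setting and confirming that the middle factor genuinely lands in $\mathfrak{A}$ rather than merely in $\mathfrak{G}$; this is exactly where closedness of $\mathfrak{A}$ is indispensable, since without it the identity on an infinite-dimensional space need not belong to $\mathfrak{A}$. I would invoke the linear counterpart of the minimality of the approximable ideal (Pietsch's result that $\mathfrak{G}$ is the smallest closed operator ideal) to conclude $\mathfrak{G}=\mathfrak{F}_{clos}\subseteq\mathfrak{A}_{clos}=\mathfrak{A}$, paralleling verbatim the argument already given for $\textfrak{G}^{L}$ being the smallest closed nonlinear ideal. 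With $\mathfrak{G}\subseteq\mathfrak{A}$ in hand, combining the trivial factorization above with Remark \ref{remmin}$\,\bf(1)$ closes the loop and yields $(\mathfrak{A})^{L}_{min}=\textfrak{G}^{L}$.

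\medskip

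For the final assertion, the particular case $(\mathfrak{G})^{L}_{min}=\textfrak{G}^{L}$ follows immediately by specializing $\mathfrak{A}=\mathfrak{G}$, since $\mathfrak{G}$ is itself a closed ideal; no separate argument is required beyond noting that $\mathfrak{G}$ satisfies the hypothesis of the proposition.
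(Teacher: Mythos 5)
Your overall architecture matches the paper's: both inclusions are split the same way, $(\mathfrak{A})^{L}_{min}\subseteq\textfrak{G}^{L}$ is delegated to Remark \ref{remmin}, and the derivation of $\mathfrak{G}\subseteq\mathfrak{A}$ from closedness is exactly what the paper uses. However, the central step of your reverse inclusion contains a genuine error: the factorization $T=I_{F}\circ I_{F}\circ T$ requires the outer factor to lie in $\mathfrak{G}(F,F)$, and the identity on an infinite-dimensional Banach space is \emph{not} approximable (it is not even compact), so $I_{F}\notin\mathfrak{G}(F,F)$ in general. Your claim ``$I_{F}\in\mathfrak{G}(F,F)\subseteq\mathfrak{A}(F,F)$'' is therefore false, and closedness of $\mathfrak{A}$ cannot rescue it for two reasons: first, the definition $(\mathfrak{A})^{L}_{min}=\mathfrak{G}\circ\mathfrak{A}\circ\textfrak{G}^{L}$ forces the outer factor to lie in $\mathfrak{G}$ specifically, not merely in $\mathfrak{A}$; second, even a closed ideal such as the compact operators fails to contain $I_{F}$ when $F$ is infinite-dimensional, so the middle factor is equally problematic. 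You flag this worry yourself (``the identity on an infinite-dimensional space need not belong to $\mathfrak{A}$'') but then proceed as if closedness resolves it, which it does not.

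The missing idea is a genuine, non-trivial factorization of an arbitrary $T\in\textfrak{G}^{L}(X,F)$ through \emph{three} approximable factors. This is precisely what the paper supplies: Proposition \ref{idemFlGl} shows $\mathfrak{G}\circ\textfrak{G}^{L}=\textfrak{G}^{L}$ by writing $T$ as an absolutely convergent series of Lipschitz finite operators, factoring each term through a finite-dimensional space, and reassembling through an $\ell_{2}$-sum to get $T=VU$ with $V\in\mathfrak{G}(M,F)$ and $U\in\textfrak{G}^{L}(X,M)$; combined with the linear idempotence $\mathfrak{G}\circ\mathfrak{G}=\mathfrak{G}$ of \eqref{idemFG}, this yields
$$\textfrak{G}^{L}=(\mathfrak{G})^{L}_{min}=\mathfrak{G}\circ\mathfrak{G}\circ\textfrak{G}^{L}\subseteq\mathfrak{G}\circ\mathfrak{A}\circ\textfrak{G}^{L}=(\mathfrak{A})^{L}_{min},$$
with the middle approximable factor absorbed into $\mathfrak{A}$ via $\mathfrak{G}\subseteq\mathfrak{A}$. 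Without an appeal to these factorization results (or an equivalent substitute), your argument does not establish $\textfrak{G}^{L}\subseteq(\mathfrak{A})^{L}_{min}$. Your treatment of the particular case $\mathfrak{A}=\mathfrak{G}$ as a specialization is fine in principle, though note the paper proves that case \emph{first} and then uses it as the engine for the general case.
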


The prove of the counterpart of this proposition for ideals of linear operators requires the notion of idempotence of ideals, see \cite[Prop. 4.8.4]{P78}. In particular, the equalities 
\begin{equation}\label{idemFG}
  \mathfrak{F} \circ \mathfrak{F} = \mathfrak{F} \ \ \ \text{and} \ \ \ \mathfrak{G} \circ \mathfrak{G} = \mathfrak{G}
\end{equation}
are needed.
Since idempotence does not make sense for nonlinear ideals, we instead use the following equalities.

\begin{prop}\label{idemFlGl}
  Any Lipschitz finite operator can be written as a product of a linear operator with finite rank and a Lipschitz finite operator. 
	Any Lipschitz approximable operator can be written as a product of a linear approximable operator  and a Lipschitz approximable operator. 
	That is, $\mathfrak{F} \circ \textfrak{F}^L = \textfrak{F}^L$ and $\mathfrak{G} \circ \textfrak{G}^L = \textfrak{G}^L$. 
\end{prop}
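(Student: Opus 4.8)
The plan is to prove each of the two identities by splitting it into the two inclusions, handling the finite-rank case first and then bootstrapping to the approximable case by a density argument. For the inclusion $\textfrak{F}^{L}\subseteq\mathfrak{F}\circ\textfrak{F}^{L}$ I would start from a representation $T=\sum_{j=1}^{m}g_{j}\boxdot e_{j}\in\textfrak{F}^{L}(X,F)$ and factor it through its finite-dimensional range $Z:=\spa\{e_{1},\dots,e_{m}\}\subseteq F$. Writing $\iota\colon Z\hookrightarrow F$ for the isometric inclusion and $T_{0}\colon X\to Z$ for the corestriction $T_{0}x=\sum_{j}g_{j}(x)e_{j}$, one gets $T=\iota\circ T_{0}$ with $\iota\in\mathfrak{F}(Z,F)$ (finite rank, $\|\iota\|=1$), $T_{0}\in\textfrak{F}^{L}(X,Z)$, and $\Lip(T_{0})=\Lip(T)$ since $\iota$ is isometric. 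The reverse inclusion is immediate: if $L\in\mathfrak{F}(G,F)$ and $S=\sum_{k}g_{k}\boxdot c_{k}\in\textfrak{F}^{L}(X,G)$, then $LSx=\sum_{k}g_{k}(x)\,Lc_{k}$, so $LS=\sum_{k}g_{k}\boxdot(Lc_{k})\in\textfrak{F}^{L}(X,F)$; hence $\mathfrak{F}\circ\textfrak{F}^{L}=\textfrak{F}^{L}$.

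For $\mathfrak{G}\circ\textfrak{G}^{L}\subseteq\textfrak{G}^{L}$ I would argue by approximation. Given $L\in\mathfrak{G}(G,F)$ and $S\in\textfrak{G}^{L}(X,G)$, choose finite-rank $L_{n}\to L$ in operator norm and Lipschitz-finite $S_{n}\to S$ in the Lipschitz norm. Each $L_{n}S_{n}$ lies in $\textfrak{F}^{L}$ by the finite case, and the estimate $\Lip(LS-L_{n}S_{n})\le\|L-L_{n}\|\,\Lip(S)+\|L_{n}\|\,\Lip(S-S_{n})\to 0$ (using that $\|L_{n}\|$ is bounded) shows $LS\in\textfrak{F}^{L}_{clos}=\textfrak{G}^{L}$.

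The hard part is the remaining inclusion $\textfrak{G}^{L}\subseteq\mathfrak{G}\circ\textfrak{G}^{L}$, i.e. factoring a given Lipschitz approximable $T$ as (linear approximable)$\,\circ\,$(Lipschitz approximable). Here I would first pick Lipschitz-finite $T_{n}$ with $\Lip(T-T_{n})$ decaying geometrically, set $R_{1}:=T_{1}$ and $R_{n}:=T_{n}-T_{n-1}$, so that $T=\sum_{n}R_{n}$ with $\Lip(R_{n})\le c\rho^{n}$ for some $0<\rho<1$. Factoring each finite-rank piece through its finite-dimensional range $Z_{n}:=\spa R_{n}(X)\subseteq F$ gives $R_{n}=\iota_{n}\circ R_{0,n}$ with $\iota_{n}\colon Z_{n}\hookrightarrow F$ isometric and $\Lip(R_{0,n})=\Lip(R_{n})$; the crucial point is that this range factorization is norm-exact, so no nuclear-type quantity has to be estimated. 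I would then assemble the pieces through the $c_{0}$-sum $Z:=\left(\bigoplus_{n}Z_{n}\right)_{c_{0}}$, defining $Sx:=(\tau_{n}R_{0,n}x)_{n}$ and $L((z_{n})_{n}):=\sum_{n}\tau_{n}^{-1}\iota_{n}(z_{n})$ with weights $\tau_{n}:=\rho^{-n/2}$. Then $LS=T$, while $\tau_{n}\Lip(R_{n})\le c\rho^{n/2}\to 0$ makes $S$ Lipschitz approximable (its coordinate truncations are Lipschitz finite), and $\sum_{n}\tau_{n}^{-1}=\sum_{n}\rho^{n/2}<\infty$ exhibits $L=\sum_{n}\tau_{n}^{-1}\iota_{n}\pi_{n}$ (with $\pi_{n}$ the $n$-th coordinate projection) as a norm-convergent series of finite-rank operators, so $L\in\mathfrak{G}(Z,F)$.

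The main obstacle, which the choice of weights is designed to overcome, is the \emph{simultaneous} approximability of both factors: a naive factorization through a sequence space makes the reconstruction map $L$ bounded but not approximable, because its coordinate projections have infinite-dimensional range. Routing each $R_{n}$ through its own finite-dimensional block $Z_{n}$ and summing over a $c_{0}$-sum of these blocks is exactly what keeps $L$ a limit of finite-rank maps, while the geometric Lipschitz decay of the $R_{n}$ lets the single weight sequence $\tau_{n}=\rho^{-n/2}$ drive $\tau_{n}\Lip(R_{n})\to 0$ and keep $\sum_{n}\tau_{n}^{-1}$ finite at the same time. I would close by noting that the basepoint-preserving convention on $X^{\#}$ forces $R_{0,n}x_{0}=0$, which guarantees $\tau_{n}R_{0,n}x\to 0$ and hence that $Sx$ genuinely lies in the $c_{0}$-sum; combining the two inclusions then yields $\mathfrak{G}\circ\textfrak{G}^{L}=\textfrak{G}^{L}$.
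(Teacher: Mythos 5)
Your proof is correct and follows essentially the same route as the paper's: decompose $T$ into an absolutely convergent series of Lipschitz finite operators, factor each piece through its finite-dimensional range, and reassemble through a direct sum with weights that balance the norms between the linear and the Lipschitz factor so that both become approximable. The only cosmetic difference is that you use a $c_0$-sum with geometric weights $\tau_n=\rho^{-n/2}$ where the paper uses an $\ell_2$-sum with the square-root normalization $\left\|V_n\right\|^2=\Lip(U_n)^2=\Lip(T_n)$; both devices accomplish the same simultaneous approximability of the two factors.
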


\begin{proof}
Let $T=\sum\limits_{j=1}^{m} g_{j}\boxdot e_{j}$ with  $g_{1},\cdots, g_{m}$ in $X^{\#}$ and $e_{1},\cdots, e_{m}$ in $F$ be a Lipschitz finite operator. Let $F_0$ be the finite dimensional subspace of $F$ spanned by $e_{1},\cdots, e_{m}$ and let $J:F_0 \to F$ be the embedding. Obviously, $J$ is a linear operator with finite rank. Moreover, let $T_0$ be the operator $T$ considered as an operator from $X$ to $F_0$. Then $T=J T_0$ is the required factorization. Observe that we also have $\Lip(T_0) \|J\|=\Lip(T)$. The inclusion $\mathfrak{F} \circ \textfrak{F}^L \subseteq \textfrak{F}^L$ is obvious.

Now let $T \in \textfrak{G}^L(X,F)$. Since $T$ can be approximated by Lipschitz finite operators, we can also find Lipschitz finite operators $T_n\in \textfrak{F}^L(X,F)$ such that the sum $T=\sum\limits_{n=1}^{\infty} T_n$ converges absolutely in $\Lip(X,F)$, i.e. $\sum\limits_{n=1}^{\infty} \Lip(T_n) < \infty$. Now each $T_n$ can be factored as $T_n=V_n U_n$ with
$V_n \in \mathfrak{F}(F_i,F)$ and $U_n \in \textfrak{F}^L(X,F_i)$ such that $F_i$ is a suitable Banach space and $\|V_n\| \Lip(U_n) =\Lip(T)$. By homogeneity, we may assume that $\|V_n\|^2 = \Lip(U_n)^2 = \Lip(T_n)$. Let $M:=\ell_{2}(F_{n})$ and put $$V:=\sum\limits_{n=1}^{\infty} J_{n} V_{n}\ \ \  \text{and} \ \ \  U:=\sum\limits_{n=1}^{\infty} U_{n} Q_{n}.$$ Then $$\Lip(U)^{2}\leq\sum\limits_{n=1}^{\infty} \Lip(U_{n})^{2}<\infty\ \ \  \text{and} \ \ \  \left\|V\right\|^{2}\leq\sum\limits_{n=1}^{\infty} \left\|V_{n}\right\|^{2}<\infty.$$ This shows that the series defining $U$ and $V$ are absolutely convergent in $\Lip(X,M)$ and $\mathfrak{L}(M,F)$, respectively. Hence $U \in \textfrak{G}^L(X,M)$ and $V \in \mathfrak{G}(M,F)$ and $T=VU$ is the required factorization. Again, $\mathfrak{G} \circ \textfrak{G}^L \subseteq \textfrak{G}^L$ is obvious. 
\end{proof}

We can now prove Proposition \ref{propmin}.

\begin{proof}[Proof of Proposition \ref{propmin}]
 By \eqref{idemFG} and Proposition \ref{idemFlGl}, we have  
 $$(\mathfrak{G})^{L}_{min} = \mathfrak{G} \circ \mathfrak{G} \circ \textfrak{G}^{L} = \mathfrak{G} \circ \textfrak{G}^{L} = \textfrak{G}^{L}.$$
 If now $\mathfrak{A}$ is a closed ideal, then $\mathfrak{G} \subseteq \mathfrak{A}$ implies
 $$ \textfrak{G}^{L} = (\mathfrak{G})^{L}_{min} = \mathfrak{G} \circ \mathfrak{G} \circ \textfrak{G}^{L} \subseteq \mathfrak{G} \circ \mathfrak{A} \circ \textfrak{G}^{L} = (\mathfrak{A})^{L}_{min}.$$
 The reverse implication was already observed in Remark \ref{remmin}. 
\end{proof}


\subsection{Lipschitz interpolative ideal procedure between metric spaces and Banach spaces}\label{Auto20}

\begin{prop}
$\left[\textfrak{A}^{L}_{\text{inj}}, \mathbf{A}^{L}_{\text{inj}}\right]$ is a $p$-normed nonlinear ideal.  
\end{prop}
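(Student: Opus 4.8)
The plan is to equip the injective hull with the functional
$$\mathbf{A}^{L}_{\text{inj}}(T):=\mathbf{A}^{L}\left(J_{F}T\right),\qquad T\in\textfrak{A}^{L}_{\text{inj}}(X,F),$$
which is unambiguously defined because, by definition of the injective hull, $T\in\textfrak{A}^{L}_{\text{inj}}(X,F)$ means exactly $J_{F}T\in\textfrak{A}^{L}(X,F^{\text{inj}})$. That $\textfrak{A}^{L}_{\text{inj}}$ is a nonlinear ideal in the algebraic sense is already Proposition \ref{Auto29}, so it remains only to verify the four quantitative requirements $\bf (\widetilde{PNOI_0})$--$\bf (\widetilde{PNOI_3})$ for $\mathbf{A}^{L}_{\text{inj}}$. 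The guiding observation is that $J_{F}$ is a linear metric injection, so $T\mapsto J_{F}T$ is a linear $\mathbf{A}^{L}$-isometry of $\textfrak{A}^{L}_{\text{inj}}(X,F)$ onto a subspace of $\textfrak{A}^{L}(X,F^{\text{inj}})$; most of the checking is just transporting the corresponding property of $\left[\textfrak{A}^{L},\mathbf{A}^{L}\right]$ through this isometry.

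For $\bf (\widetilde{PNOI_0})$ I would use $J_{F}(g\boxdot e)=g\boxdot(J_{F}e)$ together with $\|J_{F}e\|=\|e\|$, so that $\bf (\widetilde{PNOI_0})$ for $\mathbf{A}^{L}$ yields $\mathbf{A}^{L}_{\text{inj}}(g\boxdot e)=\mathbf{A}^{L}(g\boxdot J_{F}e)=\Lip(g)\,\|J_{F}e\|=\Lip(g)\,\|e\|$. The $p$-triangle inequality $\bf (\widetilde{PNOI_1})$ is immediate from the linearity of $J_{F}$: since $J_{F}(S+T)=J_{F}S+J_{F}T$, the $p$-triangle inequality for $\mathbf{A}^{L}$ gives $\mathbf{A}^{L}_{\text{inj}}(S+T)^{p}=\mathbf{A}^{L}(J_{F}S+J_{F}T)^{p}\le\mathbf{A}^{L}(J_{F}S)^{p}+\mathbf{A}^{L}(J_{F}T)^{p}=\mathbf{A}^{L}_{\text{inj}}(S)^{p}+\mathbf{A}^{L}_{\text{inj}}(T)^{p}$.

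The ideal inequality $\bf (\widetilde{PNOI_2})$ is where the genuine work lies. Reusing the commutative diagram of Proposition \ref{Auto29}, I have $J_{F_{0}}(BTA)=B^{\text{inj}}(J_{F}T)A$, whence $\bf (\widetilde{PNOI_2})$ for $\mathbf{A}^{L}$ gives
$$\mathbf{A}^{L}_{\text{inj}}(BTA)=\mathbf{A}^{L}\left(B^{\text{inj}}(J_{F}T)A\right)\le\left\|B^{\text{inj}}\right\|\,\mathbf{A}^{L}(J_{F}T)\,\Lip(A)=\left\|B^{\text{inj}}\right\|\,\mathbf{A}^{L}_{\text{inj}}(T)\,\Lip(A).$$
The crux is therefore to produce the extension $B^{\text{inj}}$ with $\|B^{\text{inj}}\|=\|B\|$ and not merely with some finite norm. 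This is exactly where the extension property of $F_{0}^{\text{inj}}=\ell_{\infty}(B_{F_{0}^{*}})$ enters: since $J_{F}\colon F\to F^{\text{inj}}$ is an isometric embedding and $\ell_{\infty}(B_{F_{0}^{*}})$ is a metrically injective (norm-one injective) Banach space, the operator $J_{F_{0}}B\colon F\to F_{0}^{\text{inj}}$ extends along $J_{F}$ to some $B^{\text{inj}}\colon F^{\text{inj}}\to F_{0}^{\text{inj}}$ with $B^{\text{inj}}J_{F}=J_{F_{0}}B$ and $\|B^{\text{inj}}\|=\|J_{F_{0}}B\|=\|B\|$, the last equality holding because $J_{F_{0}}$ is isometric. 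This delivers the required estimate $\mathbf{A}^{L}_{\text{inj}}(BTA)\le\|B\|\,\mathbf{A}^{L}_{\text{inj}}(T)\,\Lip(A)$.

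For completeness $\bf (\widetilde{PNOI_3})$ I would start from a sequence $(T_{n})$ in $\textfrak{A}^{L}_{\text{inj}}(X,F)$ that is Cauchy for $\mathbf{A}^{L}_{\text{inj}}$. Then $(J_{F}T_{n})$ is Cauchy in $\textfrak{A}^{L}(X,F^{\text{inj}})$, which is complete by $\bf (\widetilde{PNOI_3})$ for $\mathbf{A}^{L}$, so $J_{F}T_{n}\to S$ in $\mathbf{A}^{L}$ for some $S\in\textfrak{A}^{L}(X,F^{\text{inj}})$. Invoking that $\mathbf{A}^{L}$ dominates $\Lip$ (the inequality $\Lip\le\mathbf{A}^{L}$ proved earlier), $\mathbf{A}^{L}$-convergence forces $\Lip$-convergence and hence pointwise convergence, so $S(x)=\lim_{n}J_{F}T_{n}(x)$ for each $x$. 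As $F$ is complete, $J_{F}(F)$ is closed in $F^{\text{inj}}$, and each $J_{F}T_{n}(x)$ lies in it, so $S(x)\in J_{F}(F)$. Setting $T:=J_{F}^{-1}\circ S$ produces a Lipschitz operator with $J_{F}T=S\in\textfrak{A}^{L}(X,F^{\text{inj}})$, hence $T\in\textfrak{A}^{L}_{\text{inj}}(X,F)$ and $\mathbf{A}^{L}_{\text{inj}}(T_{n}-T)=\mathbf{A}^{L}(J_{F}T_{n}-S)\to0$. I expect the norm-preserving extension in $\bf (\widetilde{PNOI_2})$ to be the only real obstacle; the remaining items are routine transport through the isometry $T\mapsto J_{F}T$.
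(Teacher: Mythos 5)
Your proposal is correct and follows the same route as the paper's proof: set $\mathbf{A}^{L}_{\text{inj}}(T)=\mathbf{A}^{L}(J_{F}T)$ and transport each condition through the linear map $T\mapsto J_{F}T$, using the factorization $J_{F_{0}}(BTA)=B^{\text{inj}}(J_{F}T)A$ from Proposition \ref{Auto29} for the ideal inequality. You in fact go further than the paper, which asserts $\left\|B^{\text{inj}}\right\|=\left\|B\right\|$ without comment and does not verify $\bf (\widetilde{PNOI_0})$ or completeness; your justification of the norm-one extension via the metric injectivity of $\ell_{\infty}(B_{F_{0}^{*}})$, and your completeness argument using $\Lip\leq\mathbf{A}^{L}$ together with the closedness of $J_{F}(F)$ in $F^{\text{inj}}$, supply details the paper leaves implicit.
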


\begin{proof}
By Proposition \ref{Auto29} the algebraic conditions of Definition \ref{A7777} are hold. Then the injective hull $\textfrak{A}^{L}_{inj}$  is a nonlinear ideal. To prove the norm condition $\bf (\widetilde{PNOI_1})$, let $T$ and $S$ in $\textfrak{A}^{L}_{\text{inj}}(X,F)$. Then
\begin{align}
\mathbf{A}^{L}_{\text{inj}}\left(S+T\right)^{p}&:=\mathbf{A}^{L}\left[J_{F} \left(S+T\right)\right]^{p}=\mathbf{A}^{L}\left[J_{F} S+ J_{F} T\right]^{p} \nonumber \\
&\leq\mathbf{A}^{L}\left(J_{F} S\right)^{p} + \mathbf{A}^{L}\left(J_{F} T\right)^{p} \nonumber \\
&=\mathbf{A}^{L}_{\text{inj}}(S)^{p} + \mathbf{A}^{L}_{\text{inj}}(T)^{p}. \nonumber
\end{align}
Let $A\in \Lip(X_{0},X)$, $T\in\textfrak{A}^{L}_{\text{inj}}(X,F)$, and $B\in\mathfrak{L}(F,F_{0})$. Then 
\begin{align}
\mathbf{A}^{L}_{\text{inj}}\left(BTA\right)&:=\mathbf{A}^{L}\left(J_{F} \left(BTA\right)\right)=\mathbf{A}^{L}\left(B^{\text{inj}} \left(J_{F} T\right) A\right)                               \nonumber \\
&\leq\left\|B^{\text{inj}}\right\|\mathbf{A}^{L}\left(J_{F} T\right) \Lip(A)                      \nonumber \\
&=\left\|B\right\|\mathbf{A}^{L}_{\text{inj}}(T) \Lip(A).                                        \nonumber 
\end{align}
Hence the norm condition $\bf (\widetilde{PNOI_2})$ is satisfied.  
\end{proof}

\begin{definition}\label{Torezen}
Let $0\leq \theta< 1$ and $\left[\textfrak{A}^{L}, \mathbf{A}^{L}\right]$ be a normed nonlinear ideal. A Lipschitz operator $T$ from $X$ into $F$ belongs to $\textfrak{A}^{L}_{\theta}(X, F)$ if there exist a Banach space $G$ and a Lipschitz operator $S\in\textfrak{A}^{L}(X, G)$ such that 
\begin{equation}\label{pooor}
\left\|Tx'-Tx''|F\right\|\leq\left\|Sx'-Sx''|G\right\|^{1-\theta}\cdot d_{X}(x',x'')^{\theta},\ \  \forall\: x',\; x'' \in X.
\end{equation}
For each $T\in\textfrak{A}^{L}_{\theta}(X, F)$, we set 

\begin{equation}\label{pooooor}
\mathbf{A}^{L}_{\theta}(T):=\inf\mathbf{A}^{L}(S)^{1-\theta}
\end{equation}
where the infimum is taken over all Lipschitz operators $S$ admitted in (\ref{pooor}). 

Note that $\Lip(T)\leq\mathbf{A}^{L}_{\theta}(T)$, by definition.
\end{definition}

\begin{prop}\label{Flughafen}
$\left[\textfrak{A}^{L}_{\theta}, \mathbf{A}^{L}_{\theta}\right]$ is an injective Banach nonlinear ideal. 
\end{prop}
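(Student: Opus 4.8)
The plan is to verify that $\left[\textfrak{A}^{L}_{\theta}, \mathbf{A}^{L}_{\theta}\right]$ satisfies every clause of Definition \ref{A7777} with $p=1$ together with injectivity, organizing the work into four blocks: the three ideal axioms, the norm properties, completeness, and injectivity. Throughout, ``witness'' of $T$ means a Banach space $G$ and an $S\in\textfrak{A}^{L}(X,G)$ making (\ref{pooor}) hold, and the recurring device is that \emph{scaling the witness} converts a leading constant into the interpolation form $\|Sx'-Sx''\|^{1-\theta}d_X(x',x'')^{\theta}$.

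For the algebraic axioms I would proceed as follows. For $\bf (\widetilde{PNOI_0})$ I test $g\boxdot e$ against the scaled witness $S:=(\Lip(g)\|e\|)^{\theta/(1-\theta)}\,g\boxdot e$; since $\|(g\boxdot e)x'-(g\boxdot e)x''\|=|g(x')-g(x'')|\,\|e\|$ and $|g(x')-g(x'')|\le\Lip(g)\,d_X(x',x'')$, inequality (\ref{pooor}) holds for this $S$ and yields $\mathbf{A}^{L}_{\theta}(g\boxdot e)\le\Lip(g)\|e\|$, the reverse being the noted $\Lip(T)\le\mathbf{A}^{L}_{\theta}(T)$. For $\bf (\widetilde{PNOI_2})$, given a witness $S$ for $T$, I take $S'':=(\|B\|\,\Lip(A)^{\theta})^{1/(1-\theta)}\,S A\in\textfrak{A}^{L}(X_0,G)$; then $\|B(\,\cdot\,)\|\le\|B\|\|\cdot\|$ and $d_X(Ax_0',Ax_0'')\le\Lip(A)\,d_{X_0}(x_0',x_0'')$ verify (\ref{pooor}) for $BTA$, and the power bookkeeping $\Lip(A)^{\theta}\Lip(A)^{1-\theta}=\Lip(A)$ gives $\mathbf{A}^{L}_{\theta}(BTA)\le\|B\|\,\mathbf{A}^{L}_{\theta}(T)\,\Lip(A)$ after passing to the infimum over $S$.

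The heart of the matter, and the step I expect to be the main obstacle, is the exact triangle inequality $\bf (\widetilde{PNOI_1})$, which is precisely what separates a genuine Banach ideal from a merely quasi-normed one. Given $T_1,T_2$ with witnesses $S_i\in\textfrak{A}^{L}(X,G_i)$, I would form the honest Banach space $G:=G_1\oplus_1 G_2$ and the weighted map $S:=(w_1S_1,w_2S_2)$, where the weights satisfy the normalization $w_1^{-(1-\theta)/\theta}+w_2^{-(1-\theta)/\theta}=1$. Writing $a_i=\|S_ix'-S_ix''\|$, Hölder's inequality with exponents $1/\theta$ and $1/(1-\theta)$ gives
$$a_1^{1-\theta}+a_2^{1-\theta}\le\Bigl(w_1^{-(1-\theta)/\theta}+w_2^{-(1-\theta)/\theta}\Bigr)^{\theta}\bigl(w_1a_1+w_2a_2\bigr)^{1-\theta}=\|Sx'-Sx''\|^{1-\theta},$$
so (\ref{pooor}) holds for $T_1+T_2$ with this $S$. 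Since $S=w_1\iota_1S_1+w_2\iota_2S_2$ with isometric coordinate embeddings $\iota_i$, the axioms of $\textfrak{A}^{L}$ give $\mathbf{A}^{L}(S)\le w_1\mathbf{A}^{L}(S_1)+w_2\mathbf{A}^{L}(S_2)$, and a Lagrange optimization over the admissible weights shows $\min(w_1b_1+w_2b_2)=(b_1^{1-\theta}+b_2^{1-\theta})^{1/(1-\theta)}$ for $b_i=\mathbf{A}^{L}(S_i)$, whence $\mathbf{A}^{L}_{\theta}(T_1+T_2)\le\mathbf{A}^{L}(S_1)^{1-\theta}+\mathbf{A}^{L}(S_2)^{1-\theta}$; passing to the infima yields the genuine triangle inequality. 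Homogeneity follows by rescaling ($\lambda T$ is witnessed by $|\lambda|^{1/(1-\theta)}S$) and definiteness from $\Lip(T)\le\mathbf{A}^{L}_{\theta}(T)$. The degenerate case $\theta=0$ needs no weights, the unweighted $\ell_1$-sum working directly.

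For completeness I would invoke the criterion that absolute convergence implies convergence. Given $\sum_n\mathbf{A}^{L}_{\theta}(T_n)<\infty$, I choose witnesses $S_n$ with $\mathbf{A}^{L}(S_n)^{1-\theta}\le\mathbf{A}^{L}_{\theta}(T_n)+2^{-n}$ and countably many weights with $\sum_n w_n^{-(1-\theta)/\theta}=1$; the same Hölder estimate shows $S:=(w_nS_n)_n\in\textfrak{A}^{L}\bigl(X,\ell_1(G_n)\bigr)$ witnesses $\sum_nT_n$ and bounds its norm by $\sum_n\bigl(\mathbf{A}^{L}_{\theta}(T_n)+2^{-n}\bigr)$, while the tail version gives $\mathbf{A}^{L}_{\theta}\bigl(\sum_{n>N}T_n\bigr)\to0$. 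Finally, injectivity is immediate: because $J_F$ is a metric injection, $\|J_FTx'-J_FTx''\|=\|Tx'-Tx''\|$, so $T$ and $J_FT$ satisfy (\ref{pooor}) with exactly the same witnesses $S$; hence $\textfrak{A}^{L}_{\theta}=(\textfrak{A}^{L}_{\theta})_{\mathrm{inj}}$ with equal norms, completing the proof.
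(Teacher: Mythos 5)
Your proof is correct and follows essentially the same route as the paper's: rescaled witnesses to absorb constants into the form of inequality (\ref{pooor}), an $\ell_{1}$-direct sum of weighted witnesses combined with H\"older's inequality (with exponents $1/(1-\theta)$ and $1/\theta$) for the exact triangle inequality, the same device iterated over a countable family for completeness, and the observation that $J_{F}$ is a metric injection for injectivity. The only differences are cosmetic --- you normalize the weights and optimize afterwards where the paper fixes $w_{i}=\mathbf{A}^{L}(S_{i})^{-\theta}$ and absorbs the leftover constant into the witness --- and you explicitly supply the homogeneity and injectivity arguments that the paper declares ``not difficult'' and omits.
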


\begin{proof}
The condition of injective hull  and the algebraic conditions of Definition \ref{A7777} are not difficult to prove it. Let $x',\; x'' \in X$ and $e\in F$ the norm condition $\bf (\widetilde{PNOI_0})$ is satisfied. Indeed
\begin{equation}
\left\|(g\boxdot e) x'- (g\boxdot e) x''|F\right\|\leq\left\|\left(\Lip(g)\cdot\left\|e\right\|\right)^{\frac{\theta}{1-\theta}}\left[(g\boxdot e) x'- (g\boxdot e) x''\right]|F\right\|^{1-\theta}\cdot d_{X}(x',x'')^{\theta}.
\end{equation}
Since a Lipschitz operator $S:=(\Lip(g)\cdot\left\|e\right\|)^{\frac{\theta}{1-\theta}} (g\boxdot e)\in\textfrak{A}^{L}(X, F)$, hence $g\boxdot e\in\textfrak{A}^{L}_{\theta}(X, F)$. From (\ref{pooooor}) we have 
\begin{align}
\mathbf{A}^{L}_{\theta}(g\boxdot e)&:=\inf\mathbf{A}^{L}\big((\Lip(g)\cdot\left\|e\right\|)^{\frac{\theta}{1-\theta}} g\boxdot e\big)^{1-\theta} \nonumber \\
&\leq(\Lip(g)\cdot\left\|e\right\|)^{\theta}\mathbf{A}^{L}(g\boxdot e\big)^{1-\theta} \nonumber \\
&=(\Lip(g)\cdot\left\|e\right\|)^{\theta}\cdot(\Lip(g)\cdot\left\|e\right\|)^{1-\theta} \nonumber \\
&=\Lip(g)\cdot\left\|e\right\|.\nonumber 
\end{align}
The converse inequality is obvious. To prove the norm condition $\bf (\widetilde{PNOI_1})$, let $T_{1}$ and $T_{2}$ in $\textfrak{A}^{L}_{\theta}(X, F)$. Given $\epsilon > 0$, there is a Banach space $G_{i}$ and a Lipschitz operator $S_{i}\in\textfrak{A}^{L}(X, G_{i})$, $i=1, 2$ such that  
\begin{equation}
\left\|T_{i}x'-T_{i}x''|F\right\|\leq\left\|\mathbf{A}^{L}(S_{i})^{-\theta}\left(S_{i}x'-S_{i}x''\right)|G\right\|^{1-\theta} \left(\mathbf{A}^{L}(S_{i})^{1-\theta}\right)^{\theta}\cdot d_{X}(x',x'')^{\theta},\ \  \forall\: x',\; x'' \in X
\end{equation}
and $\mathbf{A}^{L}(S_{i})^{1-\theta}\leq (1+\epsilon)\cdot \mathbf{A}^{L}_{\theta}(T_{i})\ \left(i=1, 2\right)$. Introducing the $\ell_{1}$-sum $G:=G_{1}\oplus G_{2}$ and the Lipschitz operator $S:=\mathbf{A}^{L}(S_{1})^{-\theta} J_{1} S_{1} + \mathbf{A}^{L}(S_{2})^{-\theta} J_{2} S_{2}\in\textfrak{A}^{L}(X, G)\  \left(J_{1}, J_{2}\  \text{the canonical injections}\right)$ and applying the H\"older inequality, we get for all $x', x''\in X$
\begin{align}
\left\|(T_{1}+T_{2})x' - (T_{1}+T_{2})x''|F\right\|&\leq\left\|T_{1}x'+ T_{1}x''|F\right\|+\left\|T_{1}x'+ T_{1}x''|F\right\|\nonumber \\
&\leq\sum\limits_{i=1}^{2}\left\|\mathbf{A}^{L}(S_{i})^{-\theta}\left(S_{i}x'-S_{i}x''\right)|G_{i}\right\|^{1-\theta}\left(\mathbf{A}^{L}(S_{i})^{1-\theta}\right)^{\theta}\cdot d_{X}(x',x'')^{\theta} \nonumber \\
&\leq\left[\sum\limits_{i=1}^{2}\left\|\mathbf{A}^{L}(S_{i})^{-\theta}\left(S_{i}x'-S_{i}x''\right)|G_{i}\right\|\right]^{1-\theta}\left(\sum\limits_{i=1}^{2}\mathbf{A}^{L}(S_{i})^{1-\theta}\right)^{\theta} d_{X}(x',x'')^{\theta} \nonumber \\
&=\left(\mathbf{A}^{L}(S_{1})^{1-\theta} + \mathbf{A}^{L}(S_{2})^{1-\theta}\right)^{\theta}\left\|Sx'-Sx''|G\right\|^{1-\theta} \cdot d_{X}(x',x'')^{\theta}. \nonumber
\end{align}
Hence $T_{1}+ T_{2}\in\textfrak{A}^{L}_{\theta}(X, F)$ and furthermore, for $p=1$ we have
\begin{align}
\mathbf{A}^{L}_{\theta}(T_{1} + T_{2})&\leq\left[\mathbf{A}^{L}(S_{1})^{1-\theta} + \mathbf{A}^{L}(S_{2})^{1-\theta}\right]^{\theta}\mathbf{A}^{L}(S)^{1-\theta}\nonumber \\
&\leq\mathbf{A}^{L}(S_{1})^{1-\theta} + \mathbf{A}^{L}(S_{2})^{1-\theta} \nonumber \\
&\leq (1+\epsilon)\cdot\left(\mathbf{A}^{L}_{\theta}(T_{1})+\mathbf{A}^{L}_{\theta}(T_{2})\right). \nonumber
\end{align}

To prove the norm condition $\bf (\widetilde{PNOI_2})$, let $A\in \mathscr{L}(X_{0},X)$, $T\in\textfrak{A}^{L}_{\theta}(X, F)$, and $B\in\mathfrak{L}(F, F_{0})$ and $x'_{0}, x''_{0}$ in $X$. Then
\begin{align}
\left\|BTA x'_{0} - BTA x''_{0}|F_{0}\right\|&\leq \left\|B\right\|\cdot \left\|TA x'_{0} - TA x''_{0}|F\right\| \nonumber \\
&\leq\left\|B\right\|\cdot\left\|S (Ax'_{0})- S (Ax''_{0})| G\right\|^{1-\theta}\cdot d_{X}(Ax'_{0},Ax''_{0})^{\theta} \nonumber \\
&\leq \left\|B\right\|\cdot\Lip(A)^{\theta}\cdot \left\|S\circ A (x'_{0}) - S\circ A (x''_{0})| G\right\|^{1-\theta} d_{X_{0}}(x'_{0},x''_{0})^{\theta} \nonumber \\
&\leq \left\|\left\|B\right\|^{\frac{1}{1-\theta}}\cdot\Lip(A)^{\frac{\theta}{1-\theta}}   \left(S\circ A (x'_{0})-S\circ A (x''_{0})\right)| G\right\|^{1-\theta} d_{X_{0}}(x'_{0},x''_{0})^{\theta}. 
\end{align}
Since a Lipschitz map $\widetilde{S}:=\left(\left\|B\right\|^{\frac{1}{1-\theta}}\cdot\Lip(A)^{\frac{\theta}{1-\theta}}\right) S\circ A\in\textfrak{A}^{L}(X_{0}, G)$, hence $BTA\in\textfrak{A}^{L}_{\theta}(X_{0}, F_{0})$. Moreover, from (\ref{pooooor}) we have
\begin{align}\label{joo}
\mathbf{A}^{L}_{\theta}(BTA)&:=\inf \mathbf{A}^{L}(\widetilde{S})^{1-\theta} \nonumber \\
&\leq\mathbf{A}^{L}\left((\left\|B\right\|^{\frac{1}{1-\theta}}\cdot\Lip(A)^{\frac{\theta}{1-\theta}}) S\circ A\right)^{1-\theta} \nonumber \\
&\leq \left\|B\right\|\cdot\Lip(A)^{\theta}\cdot \mathbf{A}^{L}(S\circ A)^{1-\theta} \nonumber \\
&= \left\|B\right\|\cdot\Lip(A)\cdot\mathbf{A}^{L}(S)^{1-\theta}. 
\end{align}
Taking the infimum over all such $S\in\textfrak{A}^{L}(X, G)$ on the right side of (\ref{joo}), we have
$$\mathbf{A}^{L}_{\theta}(BTA)\leq\left\|B\right\|\cdot\mathbf{A}^{L}_{\theta}(T)\cdot\Lip(A).$$

To prove the completeness, condition $\bf (\widetilde{PNOI_3})$, let $(T_{n})_{n\in\mathbb{N}}$ be a sequence of Lipschitz operator in $\textfrak{A}^{L}_{\theta}(X, F)$ such that $\sum\limits_{n=1}^{\infty}\mathbf{A}^{L}_{\theta}(T_{n})<\infty$. Since $\Lip(T)\leq\mathbf{A}^{L}_{\theta}(T)$ and $\Lip(X, F)$ is a Banach space, there exists $T=\sum\limits_{n=1}^{\infty} T_{n}\in\Lip(X, F)$. Let $S_{n}\in\textfrak{A}^{L}(X, G_{n})$ such that $$\left\|T_{n}x'-T_{n}x''|F\right\|\leq\left\|S_{n}x'-S_{n}x''|G_{n}\right\|^{1-\theta}\cdot d_{X}(x',x'')^{\theta},\ \  \forall\: x',\; x'' \in X,$$
and $\mathbf{A}^{L}(S_{n})^{1-\theta}\leq\mathbf{A}^{L}_{\theta}(T_{n})+\frac{\epsilon}{2^{n}}$. Then 
$$\left(\sum\limits_{n=1}^{\infty}\mathbf{A}^{L}(S_{n})\right)^{1-\theta}\leq\sum\limits_{n=1}^{\infty}\mathbf{A}^{L}(S_{n})^{1-\theta}\leq\sum\limits_{n=1}^{\infty}\mathbf{A}^{L}_{\theta}(T_{n})+\epsilon<\infty.$$
Let $S=\sum\limits_{n=1}^{\infty} S_{n}\in\textfrak{A}^{L}(X, G)$, where $G$ is the $\ell_{1}$-sum of all $G_{n}$. Hence 
\begin{align}
\left\|Tx'-Tx''|F\right\|\leq\sum\limits_{n=1}^{\infty}\left\|T_{n}x'-T_{n}x''|F_{n}\right\|&\leq\sum\limits_{n=1}^{\infty}\left\|S_{n}x'-S_{n}x''|G_{n}\right\|^{1-\theta}\cdot d_{X}(x',x'')^{\theta}\nonumber \\
&\leq\left\|Sx'-Sx''|G_{n}\right\|^{1-\theta}\left(\sum\limits_{n=1}^{\infty}\mathbf{A}^{L}(S_{n})^{1-\theta}\right)^{\theta}\cdot d_{X}(x',x'')^{\theta}.\nonumber
\end{align}
This implies that $T\in\textfrak{A}^{L}_{\theta}(X, F)$ and $$\mathbf{A}^{L}_{\theta}(T)\leq\sum\limits_{n=1}^{\infty}\mathbf{A}^{L}(S_{n})^{1-\theta}\leq\sum\limits_{n=1}^{\infty}\mathbf{A}^{L}_{\theta}(T_{n})+\epsilon<\infty.$$ 
We have $$\mathbf{A}^{L}_{\theta}\left(T-\sum\limits_{j=1}^{n} T_{j}\right)=\mathbf{A}^{L}_{\theta}\left(\sum\limits_{k=n+1}^{\infty} T_{k}\right)\leq\sum\limits_{k=n+1}^{\infty}\mathbf{A}^{L}_{\theta}(S_{k})^{1-\theta}.$$
Thus, $T=\sum\limits_{n=1}^{\infty} T_{n}$.
\end{proof}
\begin{remark}
If $\theta=0$, then the nonlinear ideal $\textfrak{A}^{L}_{\theta}$ is just the injective hull of nonlinear ideal $\textfrak{A}^{L}$ and Lipschitz norms are equal. Further properties are given in.
\end{remark}

\begin{prop}
Let $0\leq \theta, \theta_{1}, \theta_{2} < 1$. Then the following holds.
\begin{enumerate}
	\item[$\bf (a)$] $\textfrak{A}^{L}_{\theta_{1}}\subset \textfrak{A}^{L}_{\theta_{2}}$ if $\theta_{1}\leq\theta_{2}$.
	\item[$\bf (b)$] $\textfrak{A}^{L}_{\text{inj}}\subset \textfrak{A}^{L}_{\theta}$.
	\item[$\bf (c)$] $\left(\textfrak{A}^{L}_{\theta_{1}}\right)_{\theta_{2}}\subset\textfrak{A}^{L}_{\theta_{1}+\theta_{2}-\theta_{1}\theta_{2}}$.
\end{enumerate}
\end{prop}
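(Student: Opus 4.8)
The plan is to read all three inclusions directly off the defining inequality \eqref{pooor} of Definition \ref{Torezen}, combined with the elementary Lipschitz bound $\|Sx'-Sx''\| \le \Lip(S)\, d_X(x',x'')$ and the fact (Proposition \ref{Auto4}) that each component $\textfrak{A}^{L}(X,G)$ is a linear space, so that positive scalar multiples of an admissible $S$ remain admissible. I would also record at the outset that, by the Remark following Proposition \ref{Flughafen}, $\textfrak{A}^{L}_{0}=\textfrak{A}^{L}_{\text{inj}}$; this lets me obtain $\bf (b)$ as the special case $\theta_{1}=0,\ \theta_{2}=\theta$ of $\bf (a)$, so the real work is in $\bf (a)$ and $\bf (c)$.

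For $\bf (a)$, I would suppose $T\in\textfrak{A}^{L}_{\theta_{1}}(X,F)$ and pick $S\in\textfrak{A}^{L}(X,G)$ with $\|Tx'-Tx''\|\le\|Sx'-Sx''\|^{1-\theta_{1}}d_{X}(x',x'')^{\theta_{1}}$. The idea is to trade the surplus exponent $\theta_{2}-\theta_{1}\ge 0$ on the $S$-term for distance, using the Lipschitz bound: write $\|Sx'-Sx''\|^{1-\theta_{1}}=\|Sx'-Sx''\|^{1-\theta_{2}}\,\|Sx'-Sx''\|^{\theta_{2}-\theta_{1}}$ and estimate the last factor by $\bigl(\Lip(S)\,d_{X}(x',x'')\bigr)^{\theta_{2}-\theta_{1}}$. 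Collecting the distance powers turns the bound into $\Lip(S)^{\theta_{2}-\theta_{1}}\|Sx'-Sx''\|^{1-\theta_{2}}d_{X}(x',x'')^{\theta_{2}}$. Absorbing the constant into the operator by setting $S':=\Lip(S)^{(\theta_{2}-\theta_{1})/(1-\theta_{2})}S\in\textfrak{A}^{L}(X,G)$ then yields exactly $\|Tx'-Tx''\|\le\|S'x'-S'x''\|^{1-\theta_{2}}d_{X}(x',x'')^{\theta_{2}}$, so $T\in\textfrak{A}^{L}_{\theta_{2}}(X,F)$. The degenerate case $\Lip(S)=0$ forces $T$ to be constant and is trivial.

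For $\bf (c)$, I would take $T\in(\textfrak{A}^{L}_{\theta_{1}})_{\theta_{2}}(X,F)$ and unwind the two layers of Definition \ref{Torezen}: there is $S\in\textfrak{A}^{L}_{\theta_{1}}(X,G)$ with $\|Tx'-Tx''\|\le\|Sx'-Sx''\|^{1-\theta_{2}}d_{X}(x',x'')^{\theta_{2}}$, and in turn an $R\in\textfrak{A}^{L}(X,H)$ with $\|Sx'-Sx''\|\le\|Rx'-Rx''\|^{1-\theta_{1}}d_{X}(x',x'')^{\theta_{1}}$. Substituting the second into the first and simplifying gives $\|Tx'-Tx''\|\le\|Rx'-Rx''\|^{(1-\theta_{1})(1-\theta_{2})}d_{X}(x',x'')^{\theta_{1}(1-\theta_{2})+\theta_{2}}$. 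The bookkeeping identities $(1-\theta_{1})(1-\theta_{2})=1-(\theta_{1}+\theta_{2}-\theta_{1}\theta_{2})$ and $\theta_{1}(1-\theta_{2})+\theta_{2}=\theta_{1}+\theta_{2}-\theta_{1}\theta_{2}$ show the two exponents are $1-\theta$ and $\theta$ for $\theta:=\theta_{1}+\theta_{2}-\theta_{1}\theta_{2}$; since $1-\theta=(1-\theta_{1})(1-\theta_{2})\in(0,1]$ we have $\theta\in[0,1)$, so $R$ witnesses $T\in\textfrak{A}^{L}_{\theta}(X,F)$, which is $\bf (c)$.

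None of the steps presents a genuine obstacle; the only point requiring a small idea is the rescaling in $\bf (a)$, where the Lipschitz estimate converts the excess exponent into distance before the resulting constant is absorbed into $S$ by homogeneity. If desired, tracking $\mathbf{A}^{L}$ through these manipulations (the scalar in $\bf (a)$, the nested infima in $\bf (c)$) shows the corresponding ideal (quasi-)norms are comparable as well, with equality of the interpolated norms in $\bf (c)$; but this is not needed for the stated inclusions.
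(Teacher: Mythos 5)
Your proposal is correct and follows essentially the same route as the paper: part $\bf (a)$ is proved by exactly the paper's exponent-splitting and rescaling trick $\widetilde{S}:=\Lip(S)^{(\theta_{2}-\theta_{1})/(1-\theta_{2})}S$, and part $\bf (c)$ by the same substitution of the inner inequality into the outer one with the same exponent bookkeeping. The only (harmless) deviation is in $\bf (b)$, where you invoke the remark $\textfrak{A}^{L}_{0}=\textfrak{A}^{L}_{\text{inj}}$ and apply $\bf (a)$, whereas the paper runs the identical computation directly on $J_{F}T$ using that $J_{F}$ is a metric injection; both arguments rest on the same idea.
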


\begin{proof}
To verify $\bf (a)$, let $T\in\textfrak{A}^{L}_{\theta_{1}}(X, F)$ and $\epsilon > 0$. Then
$$\left\|Tx'-Tx''|F\right\|\leq\left\|Sx'-Sx''|G\right\|^{1-\theta_{1}}\cdot d_{X}(x',x'')^{\theta_{1}},\ \  \forall\: x',\; x'' \in X,$$
holds for a suitable Banach space $G$ and a Lipschitz operator $S\in\textfrak{A}^{L}(X, G)$ with $\mathbf{A}^{L}(S)^{1-\theta_{1}}\leq (1+\epsilon)\cdot\mathbf{A}^{L}_{\theta_{1}}(T)$. Since
$$\left\|Tx'-Tx''|F\right\|\leq\Lip(S)^{\theta_{2}-\theta_{1}}\left\|Sx'-Sx''|G\right\|^{1-\theta_{2}}\cdot d_{X}(x',x'')^{\theta_{2}},\ \  \forall\: x',\; x'' \in X,$$
Since a Lipschitz map $\widetilde{S}:=\Lip(S)^{\frac{\theta_{2}-\theta_{1}}{1-\theta_{2}}} S\in\textfrak{A}^{L}(X, G)$, hence $T\in\textfrak{A}^{L}_{\theta_{2}}(X, F)$ and $$\mathbf{A}^{L}_{\theta_{2}}(T)\leq\mathbf{A}^{L}(\widetilde{S})^{1-\theta_{2}}\leq\Lip(S)^{\theta_{2}-\theta_{1}}\mathbf{A}^{L}(S)^{1-\theta_{2}}\leq\mathbf{A}^{L}(S)^{1-\theta_{1}}\leq (1+\epsilon)\cdot\mathbf{A}^{L}_{\theta_{1}}(T).$$
To verify $\bf (b)$, let $T\in\textfrak{A}^{L}_{\text{inj}}(X, F)$. Then $J_{F}T\in\textfrak{A}^{L}(X, F^{\text{inj}})$ and
\begin{align}
\left\|Tx'-Tx''|F\right\|&=\left\|J_{F}(Tx')-J_{F}(Tx'')|F^{\text{inj}}\right\|\nonumber \\
&\leq\Lip(T)^{\theta}\cdot\left\|J_{F}\circ T(x')-J_{F}\circ T(x'')|F^{\text{inj}}\right\|^{1-\theta}\cdot d_{X}(x',x'')^{\theta}. \nonumber 
\end{align}
Since $G:=F^{\text{inj}}$ and a Lipschitz map $S:=\Lip(T)^{\frac{\theta}{1-\theta}} J_{F}\circ T\in\textfrak{A}^{L}(X, G)$, hence $T\in\textfrak{A}^{L}_{\theta}(X, F)$. Moreover,
\begin{align}
\mathbf{A}^{L}_{\theta}(T):=\inf\mathbf{A}^{L}(S)^{1-\theta}&\leq\mathbf{A}^{L}(\Lip(T)^{\frac{\theta}{1-\theta}} J_{F}\circ T)^{1-\theta}  \nonumber \\
&\leq\Lip(T)^{\theta}\cdot\mathbf{A}^{L}(J_{F}\circ T)^{1-\theta}  \nonumber \\
&:=\Lip(T)^{\theta}\cdot\mathbf{A}_{\text{inj}}^{L}(T)^{1-\theta}  \nonumber  \\
&\leq\mathbf{A}_{\text{inj}}^{L}(T)^{\theta}\cdot\mathbf{A}_{\text{inj}}^{L}(T)^{1-\theta}  \nonumber  \\
&=\mathbf{A}_{\text{inj}}^{L}(T).  \nonumber 
\end{align} 

To verify $\bf (c)$, let $T\in\left(\textfrak{A}^{L}_{\theta_{1}}\right)_{\theta_{2}}(X, F)$ and $\epsilon > 0$. Then 
\begin{equation}\label{traurig}
\left\|Tx'-Tx''|F\right\|\leq\left\|Sx'-Sx''|G\right\|^{1-\theta_{2}}\cdot d_{X}(x',x'')^{\theta_{2}},\ \  \forall\: x',\; x'' \in X,
\end{equation}
holds for a suitable Banach space $G$ and a Lipschitz operator $S\in\textfrak{A}_{\theta_{1}}^{L}(X, G)$ with $\mathbf{A}_{\theta_{1}}^{L}(S)^{1-\theta_{2}}\leq (1+\epsilon)\cdot\left(\mathbf{A}_{\theta_{1}}^{L}(T)\right)_{\theta_{2}}$ and 
\begin{equation}\label{traurig1}
\left\|Sx'-Sx''|G\right\|\leq\left\|Rx'-Rx''|G\right\|^{1-\theta_{1}}\cdot d_{X}(x',x'')^{\theta_{1}},\ \  \forall\: x',\; x'' \in X,
\end{equation}
holds for a suitable Banach space $\widetilde{G}$ and a Lipschitz operator $R\in\textfrak{A}^{L}(X, \widetilde{G})$ with $\mathbf{A}^{L}(R)^{1-\theta_{1}}\leq (1+\epsilon)\cdot\mathbf{A}_{\theta_{1}}^{L}(S)$. From (\ref{traurig}) and (\ref{traurig1}) we have
\begin{align}
\left\|Tx'-Tx''|F\right\|&\leq\left\|Rx'-Rx''|\widetilde{G}\right\|^{({1-\theta_{1}})\cdot ({1-\theta_{2}})}\cdot d_{X}(x',x'')^{\theta_{1}\cdot (1-\theta_{2})}\cdot d_{X}(x',x'')^{\theta_{2}}\nonumber \\
&\leq\left\|Rx'-Rx''|\widetilde{G}\right\|^{1-\theta_{2}-\theta_{1}+\theta_{1}\cdot\theta_{2}}\cdot d_{X}(x',x'')^{\theta_{2}+\theta_{1}-\theta_{1}\cdot\theta_{2}},\nonumber  
\end{align}
hence $T\in\textfrak{A}^{L}_{\theta_{1}+\theta_{2}-\theta_{1}\theta_{2}}(X, F)$. Moreover,
\begin{align}
\mathbf{A}^{L}_{\theta_{1}+\theta_{2}-\theta_{1}\theta_{2}}(T)&\leq\mathbf{A}^{L}(R)^{1-\theta_{1}-\theta_{2}+\theta_{1}\theta_{2}}  \nonumber \\
&\leq (1+\epsilon)^{\frac{1-\theta_{1}-\theta_{2}+\theta_{1}\theta_{2}}{1-\theta_{1}}}\cdot\mathbf{A}_{\theta_{1}}^{L}(S)^{\frac{1-\theta_{1}-\theta_{2}+\theta_{1}\theta_{2}}{1-\theta_{1}}}  \nonumber \\
&\leq (1+\epsilon)^{\frac{1-\theta_{1}-\theta_{2}+\theta_{1}\theta_{2}}{1-\theta_{1}}}\cdot (1+\epsilon)^{\frac{1}{1-\theta_{2}}\cdot\frac{1-\theta_{1}-\theta_{2}+\theta_{1}\theta_{2}}{1-\theta_{1}}}\cdot\left(\mathbf{A}_{\theta_{1}}^{L}(T)\right)_{\theta_{2}}^{\frac{1}{1-\theta_{2}}\cdot\frac{1-\theta_{1}-\theta_{2}+\theta_{1}\theta_{2}}{1-\theta_{1}}}  \nonumber \\
&= (1+\epsilon)^{\frac{2-2\theta_{1}-\theta_{2}+\theta_{1}\theta_{2}}{1-\theta_{1}}}\cdot\left(\mathbf{A}_{\theta_{1}}^{L}(T)\right)_{\theta_{2}}.  \nonumber 
\end{align} 
\end{proof}


\subsection{Lipschitz $\left(p,\theta, q, \nu\right)$-dominated operators}\label{Auto220}

$1\leq p, q <\infty$ and $0\leq \theta, \nu< 1$ such that $\frac{1}{r}+\frac{1-\theta}{p}+\frac{1-\nu}{q}=1$ with $1\leq r <\infty$. We then introduce the following definition.

\begin{definition}\label{ten1}
  A Lipschitz operator $T$ from $X$ to $F$ is called Lipschitz $\left(p,\theta, q, \nu\right)$-dominated if there exists a Banach spaces $G$ and $H$, a Lipschitz operator $S\in\Pi_{p}^{L}(X,G)$, a bounded operator $R\in\Pi_{q}(F^{*},H)$ and a positive constant $C$ such that

\begin{equation}\label{lastwagen1}
\left|\left\langle Tx'- Tx'', b^{*}\right\rangle\right|\leq C\cdot d_{X}(x', x'')^{\theta}\left\|Sx'-Sx''| G\right\|^{1-\theta}\left\|b^{*}\right\|^{\nu}\left\|R(b^{*})|H\right\|^{1-\nu}
\end{equation}
for arbitrary finite sequences $x'$, $x''$ in $X$, and $b^{*}\subset F^{*}$. 

Let us denote by $\mathcal{D}_{\left(p, \theta, q, \nu\right)}^{L}(X,F)$ the class of all Lipschitz $\left(p, \theta, q, \nu\right)$-dominated operators from $X$ to $F$ with $$D_{\left(p, \theta, q, \nu\right)}^{L}(T)=\inf\left\{C \cdot\pi_{p}^{L}(S)^{1-\theta}\cdot \pi_{q}(R)^{1-\nu}\right\},$$
where the infimum is taken over all Lipschitz operator $S$, bounded operator $R$, and constant $C$ admitted in (\ref{lastwagen1}). 
\end{definition}

\begin{prop}\label{sun1}
The ordered pair $\left(\mathcal{D}_{\left(p, \theta, q, \nu\right)}^{L}(X,F), D_{\left(p, \theta, q, \nu\right)}^{L}\right)$ is a normed space.
\end{prop}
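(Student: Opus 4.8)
The plan is to verify the three norm axioms together with the vector space structure, isolating the triangle inequality as the genuine difficulty. Positivity of $D_{(p,\theta,q,\nu)}^{L}$ is immediate from the definition, and positive homogeneity is routine: if $T$ satisfies (\ref{lastwagen1}) with data $(S,R,C)$, then $\lambda T$ satisfies it with $(S,R,|\lambda|\,C)$, so $D_{(p,\theta,q,\nu)}^{L}(\lambda T)\le|\lambda|\,D_{(p,\theta,q,\nu)}^{L}(T)$; applying this to $\lambda^{-1}$ yields equality for $\lambda\neq0$, the case $\lambda=0$ being trivial. For closure under scalar multiples this already suffices, and closure under sums will come out of the triangle inequality below.

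For definiteness I would first record the estimate $\Lip(T)\le D_{(p,\theta,q,\nu)}^{L}(T)$. Inserting $\Lip(S)\le\pi_{p}^{L}(S)$ and $\|R(b^{*})\|\le\pi_{q}(R)\,\|b^{*}\|$ (both obtained from the one-term instance of the respective summing inequalities) into (\ref{lastwagen1}) gives, for all $x',x''\in X$ and $b^{*}\in F^{*}$,
\[
|\langle Tx'-Tx'',b^{*}\rangle|\le C\,\pi_{p}^{L}(S)^{1-\theta}\,\pi_{q}(R)^{1-\nu}\,d_{X}(x',x'')\,\|b^{*}\|.
\]
Taking the supremum over $b^{*}\in B_{F^{*}}$ and then the infimum over admissible data yields $\|Tx'-Tx''\|\le D_{(p,\theta,q,\nu)}^{L}(T)\,d_{X}(x',x'')$. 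Hence $D_{(p,\theta,q,\nu)}^{L}(T)=0$ forces $\Lip(T)=0$, so $T$ is constant and, since the maps in $\Lip(X,F)$ vanish at the base point, $T=0$.

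The crux is the triangle inequality, which simultaneously establishes closure under addition. Given $T_{1},T_{2}\in\mathcal{D}_{(p,\theta,q,\nu)}^{L}(X,F)$ and $\epsilon>0$, I would choose data $(S_{i},R_{i},C_{i})$ whose invariant product $P_{i}:=C_{i}\,\pi_{p}^{L}(S_{i})^{1-\theta}\,\pi_{q}(R_{i})^{1-\nu}$ satisfies $P_{i}\le(1+\epsilon)\,D_{(p,\theta,q,\nu)}^{L}(T_{i})$. I then form the $\ell_{p}$-sum $G:=G_{1}\oplus_{p}G_{2}$ with $S:=(S_{1},S_{2})$ and the $\ell_{q}$-sum $H:=H_{1}\oplus_{q}H_{2}$ with $R:=(R_{1},R_{2})$; a direct computation of the summing norms over the direct sum gives $\pi_{p}^{L}(S)\le(\pi_{p}^{L}(S_{1})^{p}+\pi_{p}^{L}(S_{2})^{p})^{1/p}$ and $\pi_{q}(R)\le(\pi_{q}(R_{1})^{q}+\pi_{q}(R_{2})^{q})^{1/q}$. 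Applying the triangle inequality in the scalar pairing and then the three-term H\"older inequality with the conjugate exponents $r$, $p/(1-\theta)$, $q/(1-\nu)$ (whose reciprocals sum to $1$ by the standing assumption $\tfrac1r+\tfrac{1-\theta}{p}+\tfrac{1-\nu}{q}=1$), I obtain
\[
|\langle(T_{1}+T_{2})x'-(T_{1}+T_{2})x'',b^{*}\rangle|\le\Big(\sum_{i}C_{i}^{r}\Big)^{1/r} d_{X}(x',x'')^{\theta}\,\|Sx'-Sx''\|^{1-\theta}\,\|b^{*}\|^{\nu}\,\|R(b^{*})\|^{1-\nu},
\]
so $T_{1}+T_{2}\in\mathcal{D}_{(p,\theta,q,\nu)}^{L}(X,F)$ is dominated with admissible constant $(\sum_{i}C_{i}^{r})^{1/r}$.

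Finally I would exploit the scaling invariance of the defining product to collapse this constant to $P_{1}+P_{2}$. Replacing $S_{i}$ by $\lambda_{i}S_{i}$ and $R_{i}$ by $\mu_{i}R_{i}$ (with $C_{i}$ correspondingly adjusted) leaves $P_{i}$ unchanged, so I may normalize so that $\pi_{p}^{L}(S_{i})=P_{i}^{1/p}$ and $\pi_{q}(R_{i})=P_{i}^{1/q}$, which forces $C_{i}=P_{i}^{1/r}$ by the exponent identity. Then each of the three direct-sum factors becomes a power of $P_{1}+P_{2}$, and their product is $(P_{1}+P_{2})^{1/r+(1-\theta)/p+(1-\nu)/q}=P_{1}+P_{2}$; hence $D_{(p,\theta,q,\nu)}^{L}(T_{1}+T_{2})\le P_{1}+P_{2}\le(1+\epsilon)\big(D_{(p,\theta,q,\nu)}^{L}(T_{1})+D_{(p,\theta,q,\nu)}^{L}(T_{2})\big)$, and letting $\epsilon\downarrow0$ gives subadditivity. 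I expect this last balancing step to be the main obstacle: each of the three estimates carries its own exponent, and only the precise choice of scalings dictated by $\tfrac1r+\tfrac{1-\theta}{p}+\tfrac{1-\nu}{q}=1$ eliminates the spurious multiplicative constants and yields a genuine norm rather than a mere quasi-norm.
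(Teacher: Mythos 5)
Your proof is correct and follows essentially the same route as the paper's: the same $\ell_p$/$\ell_q$ direct sums for $S$ and $R$, the same three-term H\"older application with exponents $r$, $p/(1-\theta)$, $q/(1-\nu)$, and the same rescaling that equidistributes the invariant product $C\,\pi_p^L(S)^{1-\theta}\pi_q(R)^{1-\nu}$ across the three factors so that the final bound collapses to a sum rather than a quasi-norm constant. Your explicit verification of definiteness via $\Lip(T)\le D^L_{(p,\theta,q,\nu)}(T)$ and of homogeneity is a small addition the paper omits (it proves only the triangle inequality), but it does not change the substance of the argument.
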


\begin{proof} 
We prove the triangle inequality. Let $i=1,2$ and $T_{i}\in\mathcal{D}_{\left(p, \theta, q, \nu\right)}^{L}(X,F)$. For each $\epsilon> 0$, there exists a Banach spaces $G_{i}$ and $H_{i}$, a Lipschitz operators $S_{i}\in\Pi_{p}^{L}(X,G_{i})$, a bounded operators $R_{i}\in\Pi_{q}(F^{*},H_{i})$ and a positive constants $C_{i}$
such that 

\begin{equation}\label{lastwagen2}
\left|\left\langle T_{i}x'- T_{i}x'', b^{*}\right\rangle\right|\leq C_{i} d_{X}(x', x'')^{\theta}\left\|S_{i}x'-S_{i}x''| G_{i}\right\|^{1-\theta}\left\|b^{*}\right\|^{\nu}\left\|R_{i}(b^{*})|H_{i}\right\|^{1-\nu} \forall x', x'' \in X \; \forall b^{*}\subset F^{*}
\end{equation}

and

\begin{equation}\label{lastwagen3}
C_{i} \cdot\pi_{p}^{L}(S_{i})^{1-\theta}\cdot \pi_{q}(R_{i})^{1-\nu}\leq D_{\left(p, \theta, q, \nu\right)}^{L}(T_{i}) + \epsilon.
\end{equation}

For $x', x'' \in X$ and $b^{*}$ we have

\begin{align}\label{lastwagen4}
\left|\left\langle T_{i}x'- T_{i}x'', b^{*}\right\rangle\right|&\leq C_{i} \cdot d_{X}(x', x'')^{\theta}\left\|S_{i}x'-S_{i}x''| G_{i}\right\|^{1-\theta}\left\|b^{*}\right\|^{\nu}\left\|R_{i}(b^{*})|H_{i}\right\|^{1-\nu} \nonumber \\
&=\widetilde{C}_{i} \cdot d_{X}(x', x'')^{\theta}\left\|\widetilde{S}_{i}x'-\tilde{S}_{i}x''| G_{i}\right\|^{1-\theta}\left\|b^{*}\right\|^{\nu}\left\|\widetilde{R}_{i}(b^{*})|H_{i}\right\|^{1-\nu}, \nonumber 
\end{align}

where $\widetilde{C}_{i}=C_{i}^{\frac{1}{r}}\cdot\pi_{p}^{L}(S_{i})^{\frac{1-\theta}{r}}\cdot\pi_{q}(R_{i})^{\frac{1-\nu}{r}}$, $\widetilde{S}_{i}=C_{i}^{\frac{1}{p}}\cdot\pi_{p}^{L}(S_{i})^{\frac{1-\theta}{p}}\cdot\pi_{q}(R_{i})^{\frac{1-\nu}{p}}\frac{S_{i}}{\pi_{p}^{L}(S_{i})}$, and $\widetilde{R}_{i}=C_{i}^{\frac{1}{q}}\cdot\pi_{q}^{L}(S_{i})^{\frac{1-\theta}{q}}\cdot\pi_{q}(R_{i})^{\frac{1-\nu}{p}}\frac{R_{i}}{\pi_{q}^{L}(R_{i})}$.

From (\ref{lastwagen2}) and (\ref{lastwagen3}) we have $$C_{i}\leq \left(D_{\left(p, \theta, q, \nu\right)}^{L}(T_{i}) + \epsilon\right)^{\frac{1}{r}}.$$

\begin{equation}\label{lastwagen5}
\pi_{p}^{L}(S_{i})\leq \left(D_{\left(p, \theta, q, \nu\right)}^{L}(T_{i}) + \epsilon\right)^{\frac{1}{p}} \ \text{and}\  \pi_{q}(R_{i})\leq \left(D_{\left(p, \theta, q, \nu\right)}^{L}(T_{i}) + \epsilon\right)^{\frac{1}{q}}.
\end{equation}

Let $G$ and $H$ be a Banach spaces obtained as a direct sum of $\ell_{p}$ and $\ell_{q}$ by $G_{1}$ and $G_{2}$ and $H_{1}$ and $H_{2}$ respectively. Let $S$ be a Lipschitz operator from $X$ into $G$ such that $S(x)=(S_{i}(x))_{i=1}^{2}$ for $x\in X$ and $R$ be a bounded operator from $F^{*}$ into $H$ such that $R(b)=(R_{i}(b))_{i=1}^{2}$ for $b\in F^{*}$. For each finite sequence $x'$, $x''$ in $X$ we have

\begin{align}
\left\|(S(x'_{j})-S(x''_{j}))_{j=1}^{n}|\ell_{p}(G)\right\|&=\left[\sum\limits_{j=1}^{n}\left\|S(x'_{j})-S(x''_{j})|G\right\|^{p}\right]^{\frac{1}{p}}=\left[\sum\limits_{j=1}^{n}\sum\limits_{i=1}^{2}\left\|S_{i}(x'_{j})-S_{i}(x''_{j})|G\right\|^{p}\right]^{\frac{1}{p}} \nonumber \\
&\leq\left[\sum\limits_{i=1}^{2}\pi_{p}^{L}(S_{i})^{p}\sup\limits_{f\in B_{{X}^{\#}}}\sum\limits_{j=1}^{n}\left|fx'_j-fx''_j\right|^{p}\right]^{\frac{1}{p}} \nonumber \\
&=\sup\limits_{f\in B_{{X}^{\#}}}\left[\sum\limits_{j=1}^{n}\left|fx'_j-fx''_j\right|^{p}\right]^{\frac{1}{p}}\left(\sum\limits_{i=1}^{2}\pi_{p}^{L}(S_{i})^{p}\right)^{\frac{1}{p}} \nonumber
\end{align}

\begin{equation}
\pi_{p}^{L}(S)\leq\left(\sum\limits_{i=1}^{2}\pi_{p}^{L}(S_{i})^{p}\right)^{\frac{1}{p}}\leq\left(D_{\left(p, \theta, q, \nu\right)}^{L}(T_{1})+D_{\left(p, \theta, q, \nu\right)}^{L}(T_{2}) + 2\epsilon\right)^{\frac{1}{p}}.
\end{equation}

\begin{equation}
\pi_{q}(B)\leq\left(D_{\left(p, \theta, q, \nu\right)}^{L}(T_{1})+D_{\left(p, \theta, q, \nu\right)}^{L}(T_{2}) + 2\epsilon\right)^{\frac{1}{q}}.
\end{equation}

\begin{align}
&\left|\left\langle (T_{1}+T_{1})x'- (T_{1}+T_{1}) x'', b^{*}\right\rangle\right|\leq\sum\limits_{i=1}^{2} C_{i} d_{X}(x', x'')^{\theta}\left\|S_{i}x'-S_{i}x''| G_{i}\right\|^{1-\theta}\left\|b^{*}\right\|^{\nu}\left\|R_{i}(b^{*})|H_{i}\right\|^{1-\nu} \nonumber \\
&\leq d_{X}(x', x'')^{\theta}\left\|b^{*}\right\|^{\nu}\left(\sum\limits_{i=1}^{2} C_{i}^{r}\right)^{\frac{1}{r}} \left(\sum\limits_{i=1}^{2} \left\|S_{i}x'-S_{i}x''| G_{i}\right\|^{p}\right)^{\frac{1-\theta}{p}} \left(\sum\limits_{i=1}^{2} \left\|R_{i}(b^{*})|H_{i}\right\|^{q}\right)^{\frac{1-\nu}{q}}             \nonumber \\
&=d_{X}(x', x'')^{\theta}\left\|b^{*}\right\|^{\nu}\left(\sum\limits_{i=1}^{2} C_{i}^{r}\right)^{\frac{1}{r}}\left\|Sx'-Sx''| G\right\|^{1-\theta} \left\|R(b^{*})|H\right\|^{1-\nu} \nonumber
\end{align}

\begin{align}
D_{\left(p, \theta, q, \nu\right)}^{L}(T_{1}+T_{2})&\leq\left(\sum\limits_{i=1}^{2} C_{i}^{r}\right)^{\frac{1}{r}}\pi_{p}^{L}(S)^{1-\theta}\pi_{q}(R)^{1-\nu} \nonumber \\
&\leq \left(D_{\left(p, \theta, q, \nu\right)}^{L}(T_{1})+ D_{\left(p, \theta, q, \nu\right)}^{L}(T_{2})+ 2\epsilon\right)^{\frac{1}{r}+\frac{1-\theta}{p}+\frac{1-\nu}{q}}.           \nonumber 
\end{align}

Hence $D_{\left(p, \theta, q, \nu\right)}^{L}(T_{1}+T_{2})\leq D_{\left(p, \theta, q, \nu\right)}^{L}(T_{1})+ D_{\left(p, \theta, q, \nu\right)}^{L}(T_{2})$

\end{proof}

\begin{remark} 
 If $\theta=\nu=0$, then the class of all Lipschitz $\left(p, \theta, q, \nu\right)$-dominated operators from $X$ to $F$ are the class of all Lipschitz $\left(p, q\right)$-dominated operators from $X$ to $F$ considered in \cite{CD11} with $\mathcal{D}_{\left(p, \theta, q, \nu\right)}^{L}(X,F)=\mathcal{D}_{\left(p, q\right)}^{L}(X,F)$.
\end{remark}

\begin{thm}\label{thm1}
Let $X$ be a metric space, $F$ be a Banach space and $T\in Lip(X,F)$. The following conditions are equivalent.       

\begin{enumerate}
	\item[$\bf (1)$] $T\in\mathcal{D}_{\left(p, \theta, q, \nu\right)}^{L}(X,F)$.
	\item[$\bf (2)$] There is a constant $C\geq 0$ and regular probabilities $\mu$ and $\tau$ on $B_{X^{\#}}$ and $B_{F^{**}}$, respectively such that for every $x'$, $x''$ in $X$ and $b^{*}$ in $F^{*}$ the following inequality holds
\begin{align}
\left|\left\langle Tx'- Tx'', b^{*}\right\rangle\right|\leq C &\cdot\left[\int\limits_{B_{X^{\#}}}\left(\left|f(x')-f(x'')\right|^{1-\theta} d_X(x',x'')^{\theta}\right)^\frac{p}{1-\theta} d\mu(f)\right]^\frac{1-\theta}{p}\nonumber \\
&\cdot\left[\int\limits_{B_{F^{**}}}\left(\left|\left\langle b^{*}, b^{**}\right\rangle\right|^{1-\nu} \left\|b^{*}\right\|^{\nu}\right)^\frac{q}{1-\nu} d\tau(b^{**})\right]^\frac{1-\nu}{q}. \nonumber
\end{align}
	\item[$\bf (3)$]  There exists a constant $C\geq 0$ such that for every finite sequences $x'$, $x''$ in $X$; $\sigma$ in $\mathbb{R}$ and $y^{*}\subset F^{*}$ the inequality
\begin{equation}
\left\|\sigma\cdot\left\langle Tx'- Tx'', b^{*}\right\rangle|\ell_{r'}\right\|\leq C\cdot\left\|(\sigma,x',x'')\Big|\delta_{p,\theta}^{L}(\mathbb{R}\times X\times X)\right\|\left\| b^{*} \Big|\delta_{q,\nu}(F^{*})\right\|
\end{equation}
holds. 
\end{enumerate}
In this case, $D_{\left(p, \theta, q, \nu\right)}^{L}$ is equal to the infimum of such constants $C$ in either $\bf (2)$, or $\bf (3)$.  
\end{thm}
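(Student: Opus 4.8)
The plan is to prove the cycle $\mathbf{(1)}\Rightarrow\mathbf{(2)}\Rightarrow\mathbf{(3)}\Rightarrow\mathbf{(1)}$, following the linear template of \cite{ms93} but replacing the linear Pietsch domination theorem by its Lipschitz analogue of Farmer and Johnson \cite{J09} on the domain side, keeping the classical version of \cite{P78} on the dual side. The computational backbone is the elementary identity: since $d_{X}(x',x'')$ and $\|b^{*}\|$ are constant in the respective integration variables,
\[
\left[\int_{B_{X^{\#}}}\!\!\left(|f(x')-f(x'')|^{1-\theta}d_{X}(x',x'')^{\theta}\right)^{\frac{p}{1-\theta}}\!d\mu(f)\right]^{\frac{1-\theta}{p}}=d_{X}(x',x'')^{\theta}\left(\int_{B_{X^{\#}}}\!\!|f(x')-f(x'')|^{p}d\mu(f)\right)^{\frac{1-\theta}{p}},
\]
and analogously for the $\tau$-integral against $\|b^{*}\|^{\nu}$. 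For $\mathbf{(1)}\Rightarrow\mathbf{(2)}$, I take $S\in\Pi_{p}^{L}(X,G)$ and $R\in\Pi_{q}(F^{*},H)$ as in Definition \ref{ten1}, apply the Lipschitz Pietsch domination theorem to $S$ (yielding $\mu\in W(B_{X^{\#}})$ with $\|Sx'-Sx''|G\|\le\pi_{p}^{L}(S)(\int|f(x')-f(x'')|^{p}d\mu)^{1/p}$) and the classical one to $R$ (yielding $\tau\in W(B_{F^{**}})$ with the matching bound), substitute into \eqref{lastwagen1}, and read off $\mathbf{(2)}$ with constant $C\,\pi_{p}^{L}(S)^{1-\theta}\pi_{q}(R)^{1-\nu}$ via the identity above; the infimum over admissible $S,R,C$ bounds the least constant in $\mathbf{(2)}$ by $D_{(p,\theta,q,\nu)}^{L}(T)$.

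For $\mathbf{(2)}\Rightarrow\mathbf{(3)}$, I apply $\mathbf{(2)}$ to each index $j$ and estimate $\|\sigma\langle Tx'-Tx'',b^{*}\rangle|\ell_{r'}\|$ by the generalized H\"older inequality with the exponents $s=\tfrac{p}{(1-\theta)r'}$ and $s'=\tfrac{q}{(1-\nu)r'}$; these are conjugate precisely because $\tfrac{1}{r}+\tfrac{1-\theta}{p}+\tfrac{1-\nu}{q}=1$ forces $\tfrac{1-\theta}{p}+\tfrac{1-\nu}{q}=\tfrac{1}{r'}$. Placing the whole weight $|\sigma_{j}|$ on the domain factor and raising to the powers $r's=\tfrac{p}{1-\theta}$ and $r's'=\tfrac{q}{1-\nu}$, I interchange sum and integral by Fubini and bound each integral against its probability measure by the supremum ($\int g\,d\mu\le\sup g$); the two factors then collapse exactly to $\|(\sigma,x',x'')|\delta_{p,\theta}^{L}(\mathbb{R}\times X\times X)\|$ and $\|b^{*}|\delta_{q,\nu}(F^{*})\|$, giving $\mathbf{(3)}$ with the same constant $C$.

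The return step $\mathbf{(2)}\Rightarrow\mathbf{(1)}$ is immediate once one writes down the canonical operators: put $G:=L_{p}(\mu)$ and $(Sx)(f):=f(x)$, $H:=L_{q}(\tau)$ and $(Rb^{*})(b^{**}):=\langle b^{*},b^{**}\rangle$. Then $\|Sx'-Sx''|G\|$ and $\|Rb^{*}|H\|$ are literally the two integral quantities, so $\mu$ and $\tau$ are Pietsch dominations witnessing $\pi_{p}^{L}(S)\le1$ and $\pi_{q}(R)\le1$, and inequality $\mathbf{(2)}$ becomes \eqref{lastwagen1}; hence $T\in\mathcal{D}_{(p,\theta,q,\nu)}^{L}(X,F)$ with $D_{(p,\theta,q,\nu)}^{L}(T)\le C$. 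Comparing with $\mathbf{(1)}\Rightarrow\mathbf{(2)}\Rightarrow\mathbf{(3)}$ and tracking constants through these three passages yields the asserted equality of $D_{(p,\theta,q,\nu)}^{L}(T)$ with the least constants in $\mathbf{(2)}$ and in $\mathbf{(3)}$.

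The main obstacle is the remaining implication $\mathbf{(3)}\Rightarrow\mathbf{(2)}$, i.e. manufacturing the \emph{two} measures $\mu,\tau$ out of the single scalar $\ell_{r'}$-inequality; by the preceding paragraph it then suffices to reach $\mathbf{(2)}$. I would run a Pietsch separation argument on the weak$^{*}$-compact convex set $\mathcal{K}:=W(B_{X^{\#}})\times W(B_{F^{**}})$. The essential difficulty, absent in the single-measure summing case, is that the target domination is a \emph{product} of two integrals and so is not affine in $(\mu,\tau)$; I would first linearize the product by Young's inequality into a sum $\tfrac{1}{s}\int(\cdots)d\mu+\tfrac{1}{s'}\int(\cdots)d\tau$, rendering the associated functional affine and weak$^{*}$-continuous on $\mathcal{K}$, observe (using point masses at the maximizers of the weak$^{*}$-continuous integrands, together with $\mathbf{(3)}$) that for each finite data set the associated functional is nonpositive at some point of $\mathcal{K}$, and then invoke a Ky Fan minimax lemma to produce a single pair $(\mu,\tau)$ working simultaneously for all data. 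The delicate points are the verification of the concavity hypothesis of the minimax lemma and the final de-homogenization: the scalars $\sigma_{j}$ supply the scaling freedom on the domain side and the homogeneity of $b^{*}$ on the dual side, and balancing these two scalings is exactly what converts the sum estimate back into the sharp product inequality $\mathbf{(2)}$ with constant $C$, closing the cycle.
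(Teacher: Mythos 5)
Your proposal is correct and follows essentially the same route as the paper's own proof: Pietsch domination of the two summing operators for $\bf (1)\Rightarrow (2)$, the canonical evaluation operators into $L_{p}(\mu)$ and $L_{q}(\tau)$ for $\bf (2)\Rightarrow (1)$, H\"older with the conjugate exponents coming from $\frac{1}{r}+\frac{1-\theta}{p}+\frac{1-\nu}{q}=1$ for $\bf (2)\Rightarrow (3)$, and for $\bf (3)\Rightarrow (2)$ the same Young-inequality linearization of the product of integrals followed by the Ky Fan type separation lemma (the paper cites \cite[E.4.2]{P78}) on $W(B_{X^{\#}})\times W(B_{F^{**}})$ and the final rescaling by $s_{1},s_{2}$. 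The ``delicate points'' you flag (concavity of the family of functionals, de-homogenization) are exactly the steps the paper carries out.
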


\begin{proof} 
$\bf (1)\Longrightarrow\bf (2)$ If $T\in\mathcal{D}_{\left(p, \theta, q, \nu\right)}^{L}(X,F)$, then there exists a Banach spaces $G$ and $H$, a Lipschitz operator $S_{1}\in\Pi_{p}^{L}(X,G)$, a bounded operator $S_{2}\in\Pi_{q}(F^{*},H)$ and a positive constant $C$ such that 
\begin{equation}
\left|\left\langle Tx'- Tx'', b^{*}\right\rangle\right|\leq C\cdot d_{X}(x', x'')^{\theta}\left\|S_{1}x'-S_{1}x''| G\right\|^{1-\theta}\left\|b^{*}\right\|^{\nu}\left\|S_{2}(b^{*})|H\right\|^{1-\nu}
\end{equation}
for arbitrary finite sequences $x'$, $x''$ in $X$ and $b^{*}\subset F^{*}$. Since $S_{1}$ is Lipschitz $p$--summing operator and $S_{2}$ is $q$--summing operator then there exists regular probabilities $\mu$ and $\tau$ on $B_{X^{\#}}$ and $B_{F^{**}}$, respectively such that
\begin{align}
\left|\left\langle Tx'- Tx'', b^{*}\right\rangle\right|\leq C\cdot\pi_{p}^{L}(S_{1})^{1-\theta}\cdot\pi_{q}(S_{2})^{1-\nu}&\left[\int\limits_{B_{X^{\#}}}\left(\left|f(x')-f(x'')\right|^{1-\theta} d_X(x',x'')^{\theta}\right)^\frac{p}{1-\theta} d\mu(f)\right]^\frac{1-\theta}{p}\nonumber \\
&\cdot\left[\int\limits_{B_{F^{**}}}\left(\left|\left\langle b^{*}, b^{**}\right\rangle\right|^{1-\nu} \left\|b^{*}\right\|^{\nu}\right)^\frac{q}{1-\nu} d\tau(b^{**})\right]^\frac{1-\nu}{q}. \nonumber
\end{align}
$\bf (2)\Longrightarrow\bf (1)$ Let $x'$, $x''$ be finite sequences in $X$ and $y^{*}$ be a finite sequence in $F^{*}$. Let $\varphi_{b^{*}}:=\left\langle b^{*}, b^{**}\right\rangle$. For each $x\in X$, let $\delta_{(x,0)}: X^{\#}\longrightarrow \mathbb{R}$ be the linear map defined by $$\delta_{(x,0)}(f)=f(x)\ \ \  (f\in X^{\#}).$$
By setting $S_{1}x:=\delta_{(x,0)}$, $S_{2}b^{*}:=\varphi_{b^{*}}$, $G:=L_{p}(B_{X^{\#}},\mu)$, and $H:=L_{q}(B_{F^{**}},\tau)$  we obtain a Lipschitz operator $S_{1}\in\Pi_{p}^{L}(X,G)$ with $\pi_{p}^{L}(S_{1})\leq 1$ and an operator $S_{2}\in\Pi_{q}(F^{*},H)$ with $\pi_{q}(S_{2})\leq 1$ such that
\begin{align}
\left|\left\langle Tx'- Tx'', b^{*}\right\rangle\right|&\leq C\cdot\left[\int\limits_{B_{X^{\#}}}\left|f(x')-f(x'')\right|^{p} d\mu(f)\right]^\frac{1-\theta}{p} \left[\int\limits_{B_{F{**}}}\left|\left\langle b^{*}, b^{**}\right\rangle\right|^{q} d\tau(b^{**})\right]^\frac{1-\nu}{q}d_X(x',x'')^{\theta}\left\|b^{*}\right\|^{\nu} \nonumber\\
&= C\cdot d_{X}(x', x'')^{\theta}\left\|S_{1}x'-S_{1}x''| G\right\|^{1-\theta}\left\|b^{*}\right\|^{\nu}\left\|S_{2}(b^{*})|H\right\|^{1-\nu}\nonumber
\end{align}
$\bf (2)\Longrightarrow\bf (3)$ Let $x'$, $x''$ be finite sequences in $X$; $\sigma$ in $\mathbb{R}$ and $y^{*}$ be a finite sequence in $F^{*}$. By $\bf (2)$ and using the H\"older inequality with exponents of $1=\frac{1}{r}+\frac{1-\theta}{p}+\frac{1-\nu}{q}$ we have 
\begin{align}
\left[\sum\limits_{j=1}^{m}\left|\sigma_{j}\right|^{r'}\left|\left\langle Tx'_{j}- Tx''_{j}, b_{j}^{*}\right\rangle\right|^{r'}\right]^{\frac{1}{r'}}\leq C & \cdot\left[\sum\limits_{j=1}^{m}\left(\int\limits_{B_{X^{\#}}}\left(\left|\sigma_{j}\right|\left|f(x'_{j})-f(x''_{j})\right|^{1-\theta} d_X(x'_{j},x''_{j})^{\theta}\right)^\frac{p}{1-\theta} d\mu(f)\right)\right]^\frac{1-\theta}{p}\nonumber \\
&\cdot\left[\sum\limits_{j=1}^{m}\left(\int\limits_{B_{F^{**}}}\left(\left|\left\langle b_{j}^{*}, b^{**}\right\rangle\right|^{1-\nu} \left\|b_{j}^{*}\right\|^{\nu}\right)^\frac{q}{1-\nu} d\tau(b^{**})\right)\right]^\frac{1-\nu}{q} \nonumber \\
&\leq C\cdot\sup\limits_{f\in B_{{X}^{\#}}}\Bigg[\sum\limits_{j=1}^{m}\left[\left|\sigma_j\right|\left|fx'_j-fx''_j\right|^{1-\theta} d_X(x'_j,x''_j)^{\theta}\right]^{\frac{p}{1-\theta}}\Bigg]^{\frac{1-\theta}{p}}\nonumber \\
&\cdot\sup\limits_{b^{**}\in B_{{F}^{**}}}\Bigg[\sum\limits_{j=1}^{m}\left[\left|\left\langle b^{*}_j, b^{**}\right\rangle \right|^{1-\nu} \left\|b^{*}_j\right\|^{\nu}\right]^{\frac{q}{1-\nu}}\Bigg]^{\frac{1-\nu}{q}}\nonumber \\
&= C\cdot\left\|(\sigma,x',x'')\Big|\delta_{p,\theta}^{L}(\mathbb{R}\times X\times X)\right\|\left\|b^{*} \Big|\delta_{q,\nu}(F^{*})\right\| \nonumber
\end{align}
$\bf (3)\Longrightarrow\bf (2)$ Take $\left[C(B_{X^{\#}})\times C(B_{F^{**}})\right]^{*}$ equipped with the weak $C(B_{X^{\#}})\times C(B_{F^{**}})$-topology. Then $W(B_{X^{\#}})\times W(B_{F^{**}})$ is a compact convex subset. For any finite sequences $\sigma$ in $\mathbb{R}$, $x'$, $x''$ in $X$ and $y^{*}$ in $F^{*}$ the equation

\begin{align}
\Psi(\mu, \tau)=\sum\limits_{j=1}^{n}\bigg(\frac{1}{r'}\left|\sigma_{j}\left\langle Tx'_{j}-Tx''_{j}, b^{*}_{j}\right\rangle\right|^{r'}& 
-\frac{C^{r'}}{\frac{p}{1-\theta}} \int\limits_{B_{X^{\#}}}\left|\sigma_{j}\right|^{\frac{p}{1-\theta}} d_{X} (x'_j, x''_j)^{\frac{\theta p}{1-\theta}}\left|f(x'_{j})-f(x''_{j})\right|^{p} d\mu(f)\nonumber \\
& -\frac{C^{r'}}{\frac{q}{1-\nu}}\int\limits_{B_{F^{**}}} \left\|b^{*}_{j}\right\|^{\frac{\nu q}{1-\nu}}\left|\left\langle b_{j}^{*}, b^{**}\right\rangle\right|^{q} d\tau(b^{**})\bigg) \nonumber
\end{align}
defines a continuous convex function $\Psi$ on $W(B_{X^{\#}})\times W(B_{F^{**}})$. From the compactness of $B_{X^{\#}}$ and $B_{F^{**}}$, there exists $f_{0}\in B_{X^{\#}}$ and $y^{**}_{0}\in B_{F^{**}}$ such that


$$\zeta=\Bigg[\sum\limits_{j=1}^{n}\left[\left|\sigma_j\right|\left|f_0 \:x'_j-f_0 \:x''_j\right|^{1-\theta} d_X(x'_j,x''_j)^{\theta}\right]^{\frac{p}{1-\theta}}\Bigg]^{\frac{1-\theta}{p}}.$$

and

$$\beta=\Bigg[\sum\limits_{j=1}^{n}\left[\left|\left\langle b^{*}_{j},b^{**}_{0}\right\rangle \right|^{1-\nu}\left\|b^{*}_{j}\right\|^{\nu}\right]^{\frac{q}{1-\nu}}\Bigg]^{\frac{1-\nu}{q}}.$$

If $\delta (f_{0})$ and $\delta (y^{**}_{0})$ denotes the Dirac measure at the point $f_{0}$ and in $y^{**}_{0}$, respectively, then we have

\begin{align}
\Psi\left(\delta (f_{0}), \delta (b^{**}_{0})\right)=\frac{1}{r'}\sum\limits_{j=1}^{n}\bigg(\left|\sigma_j\left\langle  Tx'_{j}-Tx''_{j}, b^{*}_{j}\right\rangle\right|^{r'}& - \frac{C^{r'}}{\frac{p}{1-\theta}} \left|\sigma_{j}\right|^{\frac{p}{1-\theta}} d_{X} (x'_j, x''_j)^{\frac{\theta p}{1-\theta}}\left|f_{0}(x'_{j})-f_{0}(x''_{j})\right|^{p} \nonumber \\
& -\frac{C^{r'}}{\frac{q}{1-\nu}} \left\|b^{*}_{j}\right\|^{\frac{\nu q}{1-\nu}}\left|\left\langle b_{j}^{*}, b_{0}^{**}\right\rangle\right|^{q}\bigg) \nonumber 
\end{align}
\begin{align}
\ \ \ \ \ \: &=\frac{1}{r'}\sum\limits_{j=1}^{n}\left|\sigma_j\right|^{r'}\left|\left\langle  Tx'_{j}-Tx''_{j}, b^{*}_{j}\right\rangle\right|^{r'} - C^{r'}\left(\frac{1-\theta}{p} \zeta^{\frac{p}{1-\theta}}+\frac{1-\nu}{q} \beta^{\frac{p}{1-\theta}}\right) \nonumber \\
&\leq\frac{1}{r'}\left(\sum\limits_{j=1}^{n}\left|\sigma_j\right|^{r'}\left|\left\langle  Tx'_{j}-Tx''_{j}, b^{*}_{j}\right\rangle\right|^{r'}\right)^{r'}-\frac{C^{r'}}{r'}(\zeta\cdot\beta)^{r'} \nonumber \\
&=\frac{1}{r'}\left[\left(\sum\limits_{j=1}^{n}\left|\sigma_j\right|^{r'}\left|\left\langle  Tx'_{j}-Tx''_{j}, b^{*}_{j}\right\rangle\right|^{r'}\right)^{r'}- (C\cdot\zeta\cdot\beta)^{r'}\right] \nonumber \\
&\leq 0. \nonumber
\end{align}

Since the collection $\mathfrak{Q}$ of all functions $\Psi$ obtained in this way is concave, by \cite [E.4.2] {P78} there are $\mu_{0}\in W(B_{X^{\#}})$ and $\tau_{0}\in W(B_{F^{**}})$ such that $\Psi\left(\mu_{0}, \tau_{0} \right)\leq 0$ for all $\Psi\in\mathfrak{Q}$. In particular, if $\Psi$ is generated by single finite sequences $\sigma$ in $\mathbb{R}$, $x'$, $x''$ in $X$ and $y^{*}$ in $F^{*}$, it follows that

\begin{align}
\frac{1}{r'}\left|\sigma\left\langle Tx'-Tx'', b^{*}\right\rangle\right|^{r'}&-\frac{C^{r'}}{\frac{p}{1-\theta}}\int\limits_{B_{X^{\#}}}\left|\sigma\right|^{\frac{p}{1-\theta}} d_{X} (x', x'')^{\frac{\theta p}{1-\theta}}\left|fx'-fx''\right|^{p} d\mu_{0}(f) \nonumber \\
&-\frac{C^{r'}}{\frac{q}{1-\nu}}\int\limits_{B_{F^{**}}} \left\|b^{*}\right\|^{\frac{\nu q}{1-\nu}}\left|\left\langle b^{*}, b^{**}\right\rangle\right|^{q} d\tau_{0}(b^{**})\leq 0. \nonumber
\end{align}

Finally, we put
$$s_{1}:=\left[\int\limits_{B_{X^{\#}}}\left|\sigma\right|^{\frac{p}{1-\theta}} d_{X} (x', x'')^{\frac{\theta p}{1-\theta}}\left|fx'-fx''\right|^{p} d\mu_{0}(f)\right]^{\frac{1-\theta}{p}}$$
and
$$s_{2}:=\left[\int\limits_{B_{F^{**}}} \left\|b^{*}\right\|^{\frac{\nu q}{1-\nu}}\left|\left\langle b^{*}, b^{**}\right\rangle\right|^{q} d\tau_{0}(b^{**})\right]^{\frac{1-\nu}{q}}.$$
Then
\begin{align}
\left|\sigma\left\langle Tx'-Tx'', b^{*}\right\rangle\right|&=s_{1}s_{2}\left|(s_{1}^{-1}\sigma)\left\langle Tx'-Tx'', s_{2}^{-1} b^{*}\right\rangle\right| \nonumber \\
&\leq C s_{1} s_{2} \bigg[\frac{r'}{\frac{p}{1-\theta}}\int\limits_{B_{X^{\#}}}\left|s_{1}^{-1}\sigma\right|^{\frac{p}{1-\theta}} d_{X} (x', x'')^{\frac{\theta p}{1-\theta}}\left|fx'-fx''\right|^{p} d\mu_{0}(f) \nonumber \\
& + \frac{r'}{\frac{q}{1-\nu}}\int\limits_{B_{F^{**}}} \left\|s_{2}^{-1} b^{*}\right\|^{\frac{\nu q}{1-\nu}}\left|\left\langle s_{2}^{-1} b^{*}, b^{**}\right\rangle\right|^{q} d\tau_{0}(b^{**})\bigg]^{\frac{1}{r'}}. \nonumber \\
&\leq C \ s_{1} \ s_{2}. \nonumber
\end{align}
Hence 
\begin{align}
\left|\left\langle Tx'- Tx'', b^{*}\right\rangle\right|\leq C &\cdot\left[\int\limits_{B_{X^{\#}}}\left(\left|f(x')-f(x'')\right|^{1-\theta} d_X(x',x'')^{\theta}\right)^\frac{p}{1-\theta} d\mu(f)\right]^\frac{1-\theta}{p}\nonumber \\
&\cdot\left[\int\limits_{B_{F^{**}}}\left(\left|\left\langle b^{*}, b^{**}\right\rangle\right|^{1-\nu} \left\|b^{*}\right\|^{\nu}\right)^\frac{q}{1-\nu} d\tau(b^{**})\right]^\frac{1-\nu}{q}. \nonumber
\end{align}

\end{proof}

The aforementioned Theorem \ref{thm1} will use to prove the next result. 
\begin{cor}\label{sun2}
The linear space  $\mathcal{D}_{\left(p, \theta, q, \nu\right)}^{L}(X,F)$ is a Banach space under the norm $D_{\left(p, \theta, q, \nu\right)}^{L}$.
\end{cor}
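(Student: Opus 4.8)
The plan is to invoke the standard criterion that a normed space is complete precisely when every absolutely convergent series converges in norm. I would therefore start with a sequence $(T_n)$ in $\mathcal{D}_{(p,\theta,q,\nu)}^{L}(X,F)$ satisfying $\sum_{n=1}^{\infty} D_{(p,\theta,q,\nu)}^{L}(T_n) < \infty$ and aim to produce $T \in \mathcal{D}_{(p,\theta,q,\nu)}^{L}(X,F)$ with $D_{(p,\theta,q,\nu)}^{L}\big(T - \sum_{j=1}^{N} T_j\big) \to 0$. The first step is to record the domination $\Lip(T) \le D_{(p,\theta,q,\nu)}^{L}(T)$: choosing $b^{*}$ nearly norming $Tx'-Tx''$ in (\ref{lastwagen1}) and using $\|Sx'-Sx''\| \le \pi_{p}^{L}(S)\,d_{X}(x',x'')$ together with $\|R(b^{*})\| \le \pi_{q}(R)\|b^{*}\|$ collapses the right-hand side to $C\,\pi_{p}^{L}(S)^{1-\theta}\pi_{q}(R)^{1-\nu} d_{X}(x',x'')\|b^{*}\|$. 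Since $\Lip(X,F)$ is complete, the series then converges to a well-defined $T=\sum_{n} T_n \in \Lip(X,F)$, with $Tx'-Tx'' = \sum_{n}(T_n x' - T_n x'')$ holding pointwise.

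For membership and the norm bound I would mimic the triangle-inequality computation of Proposition \ref{sun1}, upgrading the two-term direct sum to a countable one. For each $n$, fix a factorization realizing (\ref{lastwagen1}) with $C_n\,\pi_p^{L}(S_n)^{1-\theta}\pi_q(R_n)^{1-\nu} \le D_{(p,\theta,q,\nu)}^{L}(T_n) + \varepsilon/2^{n}$; here I may equally well feed each $T_n$ through Theorem \ref{thm1}, producing probability measures $\mu_n,\tau_n$ and then re-packaging them as operators into $L_p(\mu_n)$ and $L_q(\tau_n)$ with unit summing norms. After the homogeneity rescaling used in Proposition \ref{sun1}, all three quantities $\widetilde C_n^{\,r}$, $\pi_p^{L}(\widetilde S_n)^p$, $\pi_q(\widetilde R_n)^q$ become equal to a single $\gamma_n \le D_{(p,\theta,q,\nu)}^{L}(T_n)+\varepsilon/2^{n}$, which is summable. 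I would then set $G:=\ell_p(G_n)$, $H:=\ell_q(H_n)$, $Sx:=(\widetilde S_n x)_n$, $R b^{*}:=(\widetilde R_n b^{*})_n$ and $C:=(\sum_n \widetilde C_n^{\,r})^{1/r}$; the summability of $\gamma_n$ guarantees these direct sums are genuinely defined, $S\in\Pi_p^{L}(X,G)$ with $\pi_p^{L}(S)^p\le\sum_n\pi_p^{L}(\widetilde S_n)^p$ and $R\in\Pi_q(F^{*},H)$ with $\pi_q(R)^q\le\sum_n\pi_q(\widetilde R_n)^q$. Summing the per-term inequalities and applying the H\"older inequality with the exponents $\frac1r+\frac{1-\theta}{p}+\frac{1-\nu}{q}=1$, exactly as in the passage culminating in (\ref{lastwagen4}), yields (\ref{lastwagen1}) for $T$; hence $T\in\mathcal{D}_{(p,\theta,q,\nu)}^{L}(X,F)$ and $D_{(p,\theta,q,\nu)}^{L}(T)\le C\,\pi_p^{L}(S)^{1-\theta}\pi_q(R)^{1-\nu}\le\big(\sum_n\gamma_n\big)^{\frac1r+\frac{1-\theta}{p}+\frac{1-\nu}{q}}=\sum_n\gamma_n\le\sum_n D_{(p,\theta,q,\nu)}^{L}(T_n)+\varepsilon$.

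Finally, convergence in norm follows by applying the identical construction to the tails: the estimate above, used for $\sum_{n>N} T_n$ in place of $\sum_n T_n$, gives $D_{(p,\theta,q,\nu)}^{L}\big(T-\sum_{j=1}^{N}T_j\big)\le\sum_{n>N}D_{(p,\theta,q,\nu)}^{L}(T_n)+\varepsilon$, whose right-hand side tends to $0$; letting $\varepsilon\downarrow 0$ completes the proof. I expect the main obstacle to be precisely the countable bookkeeping in the second step: one must simultaneously verify that the $\ell_p$- and $\ell_q$-direct sums of infinitely many factorizations converge and keep their summing norms controlled by $(\sum_n\pi_p^{L}(\widetilde S_n)^p)^{1/p}$ and $(\sum_n\pi_q(\widetilde R_n)^q)^{1/q}$, and that the three exponents close up to $1$ so that the combined constant collapses to $\sum_n\gamma_n$ rather than a larger power. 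A naive convex combination of the Pietsch measures $\mu_n,\tau_n$ would instead produce an unwanted factor of the form $\sum_n\gamma_n^{1/r}$, which need not be finite for $r>1$; this is exactly why the $\ell_p/\ell_q$-direct-sum weighting, rather than averaging the measures, is the correct mechanism.
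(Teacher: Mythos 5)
Your argument is correct, but it follows a genuinely different route from the paper's. The paper proves completeness by taking an arbitrary Cauchy sequence $(T_n)$, invoking the Pietsch-type integral characterization of Theorem \ref{thm1}$\bf{(2)}$ for each difference $T_m-T_n$, and then using the weak$^*$ compactness of $W(B_{X^{\#}})\times W(B_{F^{**}})$ to extract a subnet of the associated probability measures converging to a pair $(\mu_n,\tau_n)$ that dominates $T-T_n$ with constant $\epsilon$; membership of $T$ and the estimate $D^{L}_{(p,\theta,q,\nu)}(T-T_n)\leq\epsilon$ then follow by passing to the limit in the integral inequality. You instead bypass Theorem \ref{thm1} entirely: you use the absolutely-convergent-series criterion and build, directly from Definition \ref{ten1}, a single countable $\ell_p/\ell_q$-direct-sum factorization, with the homogeneity normalization $\widetilde C_n^{\,r}=\pi_p^{L}(\widetilde S_n)^p=\pi_q(\widetilde R_n)^q=\gamma_n$ ensuring that the three H\"older factors all collapse to $\sum_n\gamma_n$. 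Your route is more self-contained (it needs only the two-term computation of Proposition \ref{sun1} upgraded to countably many summands, plus the elementary bound $\Lip(T)\leq D^{L}_{(p,\theta,q,\nu)}(T)$, which you correctly derive), and it yields the quantitative conclusion $D^{L}_{(p,\theta,q,\nu)}(T)\leq\sum_n D^{L}_{(p,\theta,q,\nu)}(T_n)+\varepsilon$ explicitly; the paper's route leans on the already-established domination theorem and a compactness argument, avoiding any discussion of infinite direct sums. Your closing observation about why one must weight the factorizations into $\ell_p$ and $\ell_q$ rather than average the Pietsch measures is apt and is precisely the point where a careless version of your argument would fail. The only housekeeping worth making explicit is that the pointwise finiteness of $\sum_n\|\widetilde S_n x\|^p$ and $\sum_n\|\widetilde R_n b^{*}\|^q$ uses $\Lip(\widetilde S_n)\leq\pi_p^{L}(\widetilde S_n)=\gamma_n^{1/p}$ and $\|\widetilde R_n\|\leq\pi_q(\widetilde R_n)=\gamma_n^{1/q}$ together with the summability of $(\gamma_n)$, and that the monotone convergence theorem justifies extending the two-term summing-norm estimate of Proposition \ref{sun1} to the countable sums.
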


\begin{proof} 
To prove that $\mathcal{D}_{\left(p, \theta, q, \nu\right)}^{L}(X,F)$ is complete space. We consider an arbitrary Cauchy sequence $\left(T_{n}\right)_{n\in\mathbb{N}}$ in $\mathcal{D}_{\left(p, \theta, q, \nu\right)}^{L}(X,F)$ and show that $\left(T_{n}\right)_{n\in\mathbb{N}}$ converges to $T\in\mathcal{D}_{\left(p, \theta, q, \nu\right)}^{L}(X,F)$. Since $\left(T_{n}\right)_{n\in\mathbb{N}}$ is Cauchy, for every $\epsilon>0$ there is an $n_{0}$ such that 

\begin{equation}\label{zzzzzwzzzzzz1212}
D_{\left(p, \theta, q, \nu\right)}^{L}\left(T_{m}-T_{n}\right)\leq\epsilon\ \  \text{for} \ \  m,n\geq  n_{0},
\end{equation}

Since $\Lip\left(T_{m}-T_{n}\right)\leq D_{\left(p, \theta, q, \nu\right)}^{L}\left(T_{m}-T_{n}\right)$ then $\left(T_{n}\right)_{n\in\mathbb{N}}$ is also a Cauchy sequence in the Banach space $\Lip(X,F)$, and there is a Lipschitz map $T$ with $$\lim_{n\rightarrow\infty}\Lip\left(T-T_{n}\right)=0.$$ 

From $\bf (2)$ of Theorem \ref{thm1} given $\epsilon>0$ there is  $n_{0}\in\mathbb{N}$ such that, for each $n, m\in\mathbb{N}$, $n, m\geq n_{0}$, there exist   probabilities $\mu_{n m}$ on $B_{X^{\#}}$ and $\tau_{n m}$  on $B_{F^{**}}$ such that for every $x'$, $x''$ in $X$ and $b^{*}$ in $F^{*}$ the following inequality holds
\begin{align}
\left|\left\langle (T_{m}-T_{n}) \; x'- (T_{m}-T_{n}) \; x'', b^{*}\right\rangle\right|\leq  \epsilon \; d_X(x',x'')^{\theta} \; \left\|b^{*}\right\|^{\nu} &\cdot\left[\int\limits_{B_{X^{\#}}} \left|f(x')-f(x'')\right|^{p}   d\mu_{n m}(f)\right]^{1-\theta} \nonumber \\
&\cdot\left[\int\limits_{B_{F^{**}}} \left|\left\langle b^{*}, b^{**}\right\rangle\right|^{q}   d\tau_{n m}(b^{**})\right]^ {1-\nu} . \nonumber
\end{align}
Fixed $n\geq  n_{0}$, by the weak compactness of $W\left(B_{X^{\#}}\right)$ and $W\left(B_{F^{**}}\right)$, there is a sub-net $\left(\mu_{n m}(\alpha), \tau_{n m}(\alpha)\right)_{\alpha\in\mathcal{A}}$ convergent to $\left(\mu_{n }, \tau_{n }\right)\in W(B_{X^{\#}})\times W(B_{F^{**}})$ in the topology $\sigma\left((C(B_{X^{\#}}\times C(B_{F^{**}})))^{\ast}, C(B_{X^{\#}})\times C(B_{F^{**}})\right)$. Then, there is $\alpha_{0}\in\mathcal{A}$ such that for each $x'$, $x''$ in $X$, $b^{*}$ in $F^{*}$ and $\alpha\in\mathcal{A}$ with $\alpha\geq\alpha_{0}$ we have
\begin{align}
&\left|\left\langle (T_{m(\alpha)}-T_{n}) \; x'- (T_{m(\alpha)}-T_{n}) \; x'', b^{*}\right\rangle\right|\leq  \epsilon \; d_X(x',x'')^{\theta} \; \left\|b^{*}\right\|^{\nu} \nonumber \\
&\cdot\left[\int\limits_{B_{X^{\#}}} \left|f(x')-f(x'')\right|^{p}   d(\mu_{{n m}(\alpha)}-\mu_{n})(f) + \int\limits_{B_{X^{\#}}} \left|f(x')-f(x'')\right|^{p} d\mu_{n}(f)\right]^{1-\theta} \nonumber \\
&\cdot\left[\int\limits_{B_{F^{**}}} \left|\left\langle b^{*}, b^{**}\right\rangle\right|^{q}   d(\tau_{{n m}(\alpha)}-\tau_{n})(b^{**}) + \int\limits_{B_{F^{**}}} \left|\left\langle b^{*}, b^{**}\right\rangle\right|^{q}   d\tau_{n} (b^{**})\right]^ {1-\nu} . \nonumber
\end{align}
and  taking limits when $\alpha\in\mathcal{A}$ we have 

\begin{align} 
\left|\left\langle (T -T_{n}) \; x'- (T -T_{n}) \; x'', b^{*}\right\rangle\right|\leq  \epsilon \; d_X(x',x'')^{\theta} \; \left\|b^{*}\right\|^{\nu} &\cdot\left[\int\limits_{B_{X^{\#}}} \left|f(x')-f(x'')\right|^{p}   d\mu_{n}(f)\right]^{1-\theta} \nonumber \\
&\cdot\left[\int\limits_{B_{F^{**}}} \left|\left\langle b^{*}, b^{**}\right\rangle\right|^{q}   d\tau_{n}(b^{**})\right]^{1-\nu}  \nonumber 
\end{align}
for every $x'$, $x''$ in $X$ and $b^{*}$ in $F^{*}$. It follows that  $T -T_{n}\in\mathcal{D}_{\left(p, \theta, q, \nu\right)}^{L}(X,F)$ and therefore  $T\in\mathcal{D}_{\left(p, \theta, q, \nu\right)}^{L}(X,F)$. From the last inequality it follows that $D_{\left(p, \theta, q, \nu\right)}^{L}(T -T_{n})\leq\epsilon$ if $n\geq  n_{0}$ and hence $\mathcal{D}_{\left(p, \theta, q, \nu\right)}^{L}(X,F)$ is a Banach space.
\end{proof} 

By Proposition \ref{sun1}, Theorem \ref{thm1}, and Corollary  \ref{sun2} we obtain the following result.

\begin{prop} 
$\left[\mathcal{D}_{\left(p, \theta, q, \nu\right)}^{L}, D_{\left(p, \theta, q, \nu\right)}^{L}\right]$ is a Banach nonlinear ideal.
\end{prop}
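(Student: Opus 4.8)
The plan is to read off the two analytic conditions of Definition~\ref{A7777} from results already established and to verify the two purely algebraic axioms by hand. Proposition~\ref{sun1} shows that $D_{\left(p,\theta,q,\nu\right)}^{L}$ is a norm on each component, which is precisely the content of $\bf (\widetilde{PNOI_1})$ (the $p$-triangle inequality in the case $p=1$, i.e.\ the ordinary triangle inequality, together with closure under sums). Corollary~\ref{sun2}, whose proof rests on the Pietsch-type characterization of Theorem~\ref{thm1}, shows that every component $\mathcal{D}_{\left(p,\theta,q,\nu\right)}^{L}(X,F)$ is complete, which is $\bf (\widetilde{PNOI_3})$. Thus only $\bf (\widetilde{PNOI_0})$ and $\bf (\widetilde{PNOI_2})$ remain to be checked.

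For the elementary-operator axiom $\bf (\widetilde{PNOI_0})$, I would factor $g\boxdot e$ explicitly. Writing $(g\boxdot e)x'-(g\boxdot e)x''=(g(x')-g(x''))e$ and using the splittings $|g(x')-g(x'')|\le\Lip(g)^{\theta}d_{X}(x',x'')^{\theta}|g(x')-g(x'')|^{1-\theta}$ and $|\langle e,b^{*}\rangle|\le\|e\|^{\nu}\|b^{*}\|^{\nu}|\langle e,b^{*}\rangle|^{1-\nu}$, one obtains inequality~\eqref{lastwagen1} with $G=H=\mathbb{K}$, with the scalar operators $S:=g\in\Pi_{p}^{L}(X,\mathbb{K})$ and $R:=\hat{e}\in\Pi_{q}(F^{*},\mathbb{K})$, and with $C:=\Lip(g)^{\theta}\|e\|^{\nu}$. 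Since $\pi_{p}^{L}(g)=\Lip(g)$ and $\pi_{q}(\hat{e})=\|e\|$ for scalar-valued maps, this yields $D_{\left(p,\theta,q,\nu\right)}^{L}(g\boxdot e)\le\Lip(g)\|e\|$. For the reverse inequality I would first record the direct estimate $\Lip(T)\le D_{\left(p,\theta,q,\nu\right)}^{L}(T)$: taking the supremum of~\eqref{lastwagen1} over $b^{*}\in B_{F^{*}}$ and using $\|Sx'-Sx''|G\|\le\Lip(S)d_{X}(x',x'')\le\pi_{p}^{L}(S)d_{X}(x',x'')$ together with $\|R\|\le\pi_{q}(R)$ gives $\Lip(T)\le C\,\pi_{p}^{L}(S)^{1-\theta}\pi_{q}(R)^{1-\nu}$, whence $\Lip(T)\le D_{\left(p,\theta,q,\nu\right)}^{L}(T)$. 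Because $\Lip(g\boxdot e)=\Lip(g)\|e\|$, this forces the exact equality $D_{\left(p,\theta,q,\nu\right)}^{L}(g\boxdot e)=\Lip(g)\|e\|$.

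For the ideal axiom $\bf (\widetilde{PNOI_2})$, let $A\in\Lip(X_{0},X)$, let $T\in\mathcal{D}_{\left(p,\theta,q,\nu\right)}^{L}(X,F)$ admit a factorization as in~\eqref{lastwagen1}, and let $B\in\mathfrak{L}(F,F_{0})$. Using $\langle BTA\,x_{0}'-BTA\,x_{0}'',c^{*}\rangle=\langle T(Ax_{0}')-T(Ax_{0}''),B^{*}c^{*}\rangle$ and applying~\eqref{lastwagen1} to the arguments $Ax_{0}',Ax_{0}''$ and $B^{*}c^{*}\in F^{*}$, I would estimate $d_{X}(Ax_{0}',Ax_{0}'')\le\Lip(A)\,d_{X_{0}}(x_{0}',x_{0}'')$ and $\|B^{*}c^{*}\|\le\|B\|\,\|c^{*}\|$, and replace $S$ by $\widetilde{S}:=S\circ A$ and $R$ by $\widetilde{R}:=R\circ B^{*}$. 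The ideal property of $\Pi_{p}^{L}$ gives $\widetilde{S}\in\Pi_{p}^{L}(X_{0},G)$ with $\pi_{p}^{L}(\widetilde{S})\le\pi_{p}^{L}(S)\Lip(A)$, and the ideal property of $\Pi_{q}$ gives $\widetilde{R}\in\Pi_{q}(F_{0}^{*},H)$ with $\pi_{q}(\widetilde{R})\le\pi_{q}(R)\|B\|$. This places $BTA$ in $\mathcal{D}_{\left(p,\theta,q,\nu\right)}^{L}(X_{0},F_{0})$ with constant $\widetilde{C}:=C\,\Lip(A)^{\theta}\|B\|^{\nu}$, and the bookkeeping $\widetilde{C}\,\pi_{p}^{L}(\widetilde{S})^{1-\theta}\pi_{q}(\widetilde{R})^{1-\nu}\le C\,\pi_{p}^{L}(S)^{1-\theta}\pi_{q}(R)^{1-\nu}\,\Lip(A)\,\|B\|$ yields, after taking the infimum over factorizations of $T$, the estimate $D_{\left(p,\theta,q,\nu\right)}^{L}(BTA)\le\|B\|\,D_{\left(p,\theta,q,\nu\right)}^{L}(T)\,\Lip(A)$.

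I expect the main obstacle to be the exactness of the norm identity in $\bf (\widetilde{PNOI_0})$ rather than the inequalities themselves: one must verify that a scalar Lipschitz functional has Lipschitz $p$-summing norm equal to its Lipschitz constant (by testing the defining inequality against $g/\Lip(g)\in B_{X^{\#}}$) and that $\hat{e}\in\Pi_{q}(F^{*},\mathbb{K})$ has $q$-summing norm $\|e\|$, and then to match these constants so that the factorization constant collapses precisely to $\Lip(g)\|e\|$. Everything else is the routine splitting of exponents along the identity $1=\tfrac{1}{r}+\tfrac{1-\theta}{p}+\tfrac{1-\nu}{q}$ and the invocation of the composition (ideal) properties of the linear and Lipschitz summing ideals.
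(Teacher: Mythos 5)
Your proposal is correct and follows the same route as the paper, whose entire proof is the one-line citation of Proposition~\ref{sun1} (triangle inequality), Theorem~\ref{thm1}, and Corollary~\ref{sun2} (completeness). In fact your write-up is more complete than the paper's: the explicit verifications of $\bf (\widetilde{PNOI_0})$ (via the factorization of $g\boxdot e$ through $\mathbb{K}$ with $\pi_{p}^{L}(g)=\Lip(g)$, $\pi_{q}(\hat{e})=\|e\|$, and the matching lower bound $\Lip(T)\leq D_{\left(p,\theta,q,\nu\right)}^{L}(T)$) and of $\bf (\widetilde{PNOI_2})$ (via $\widetilde{S}=S\circ A$, $\widetilde{R}=R\circ B^{*}$ and the exponent bookkeeping $\Lip(A)^{\theta}\Lip(A)^{1-\theta}=\Lip(A)$, $\|B\|^{\nu}\|B\|^{1-\nu}=\|B\|$) are nowhere carried out in the paper, and both of your arguments are sound.
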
 

\begin{remark}
 Definition \ref{Torezen} can be generalized as follows. Let $0\leq \theta< 1$ and  $\left[\textfrak{A}^{L}, \mathbf{A}^{L}\right]$ be a normed nonlinear ideal. A Lipschitz operator $T$ from $X$ into $F$ belongs to $\left(\textfrak{A}^{L}, \textfrak{B}^{L}\right)_{\theta}(X, F)$ if there exist a Banach spaces $G_{1}$, $G_{2}$ and a Lipschitz operators $S_{1}\in\textfrak{A}^{L}(X, G_{1})$ and $S_{2}\in\textfrak{B}^{L}(X, G_{2})$ such that 
\begin{equation}\label{lastwagen}
\left\|Tx'-Tx''|F\right\|\leq\left\|S_{1}x'-S_{1}x''|G_{1}\right\|^{1-\theta}\cdot \left\|S_{2}x'-S_{2}x''|G_{2}\right\|^{\theta},\ \  \forall\: x',\; x'' \in X.
\end{equation}
For each $T\in\left(\textfrak{A}^{L}, \textfrak{B}^{L}\right)_{\theta}(X, F)$, we set 
\begin{equation}
\left(\mathbf{A}^{L}, \mathbf{B}^{L}\right)_{\theta}(T):=\inf\mathbf{A}^{L}(S_{1})^{1-\theta}\cdot\mathbf{B}^{L}(S_{2})^{\theta}
\end{equation}
where the infimum is taken over all Lipschitz operators $S_{1}$, $S_{2}$  admitted in (\ref{lastwagen}). 

Note that $\Lip(T)\leq\left(\mathbf{A}^{L}, \mathbf{B}^{L}\right)_{\theta}(T)$, by definition. The nonlinear ideal $\left[\textfrak{A}^{L}_{\theta}, \mathbf{A}^{L}_{\theta}\right]$ now appear as $\left[\left(\textfrak{A}^{L}, \Lip\right)_{\theta}, \left(\mathbf{A}^{L}, \Lip(\cdot)\right)_{\theta}\right]$.
\end{remark}

\section{\bf Nonlinear operator ideals between metric spaces}\label{Sec. 4}

\begin{definition}\label{Auto777777777}
Suppose that, for every pair of metric spaces $X$ and $Y$, we are given a subset $\mathscr{A}^{L}(X,Y)$ of $\mathscr{L}(X,Y)$. The class  
$$\mathscr{A}^{L}:=\bigcup_{X,Y}\mathscr{A}^{L}(X,Y)$$
is said to be a nonlinear operator ideal, or just a nonlinear ideal, if the following conditions are satisfied:

\begin{enumerate}
	\item[$\bf (\widetilde{\widetilde{NOI_0\,}})$] If $Y=F$ is a Banach space, then  $g\boxdot e\in\mathscr{A}^{L}(X,F)$ for $g\in X^{\#}$ and $e\in F$.
	\item[$\bf (\widetilde{\widetilde{NOI_1\,}})$] $BTA\in\mathscr{A}^{L}(X_{0}, Y_{0})$ for $A\in \mathscr{L}(X_{0},X)$, $T\in\mathscr{A}^{L}(X, Y)$, and $B\in\mathscr{L}(Y, Y_{0})$.
\end{enumerate}
Condition $\bf (\widetilde{\widetilde{NOI_0}})$ implies that $\mathscr{A}^{L}$ contains nonzero Lipschitz operators.
\end{definition}

\subsection{Lipschitz interpolative ideal procedure between metric spaces}\label{Au3to20}

\begin{definition}\label{fruher}
Let $0\leq \theta< 1$. A Lipschitz map $T$ from $X$ into $Y$ belongs to $\mathscr{A}^{L}_{\theta}(X, Y)$ if there exist a constant $C\geq 0$, a metric space $Z$ and a Lipschitz map $S\in\mathscr{A}^{L}(X, Z)$ such that 
\begin{equation}\label{poooor}
d_{Y}(Tx', Tx'')\leq C \cdot d_{Z}(Sx', Sx'')^{1-\theta}\cdot d_{X}(x',x'')^{\theta},\ \  \forall\: x',\; x'' \in X.
\neq\end{equation}
For each $T\in\mathscr{A}^{L}_{\theta}(X, Y)$, we set 
\begin{equation}\label{fliehen}
\mathbf{A}^{L}_{\theta}(T):=\inf C \mathbf{A}^{L}(S)^{1-\theta}
\end{equation}
where the infimum is taken over all Lipschitz operators $S$ admitted in (\ref{poooor}). 
\end{definition} 	

\begin{prop}
$\mathscr{A}^{L}_{\theta}$ is a nonlinear ideal with $\mathbf{A}^{L}_{\theta}(BTA)\leq\Lip(B)\cdot\mathbf{A}^{L}_{\theta}(T)\cdot\Lip(A)$ for $A\in \mathscr{L}(X_{0},X)$, $T\in\mathscr{A}^{L}(X, Y)$, and $B\in\mathscr{L}(Y, Y_{0})$.. 
\end{prop}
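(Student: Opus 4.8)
The plan is to follow the Banach-space argument of Proposition \ref{Flughafen}, checking the two algebraic conditions of Definition \ref{Auto777777777} and then the stated norm estimate. Throughout I use that the base ideal $\mathscr{A}^{L}$ is equipped with its Lipschitz norm $\mathbf{A}^{L}$ obeying the metric ideal inequality $\mathbf{A}^{L}(BSA)\leq\Lip(B)\,\mathbf{A}^{L}(S)\,\Lip(A)$; in particular, taking the identity on the target, $\mathbf{A}^{L}(S\circ A)\leq\mathbf{A}^{L}(S)\,\Lip(A)$.

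First I would verify $\bf (\widetilde{\widetilde{NOI_0\,}})$. Since $\mathscr{A}^{L}$ is a nonlinear ideal we already have $g\boxdot e\in\mathscr{A}^{L}(X,F)$, so it suffices to exhibit a witness for (\ref{poooor}). Taking $Z:=F$, $S:=g\boxdot e$ and $C:=(\Lip(g)\,\|e\|)^{\theta}$, and using $d_{F}((g\boxdot e)x',(g\boxdot e)x'')=|g(x')-g(x'')|\,\|e\|$ together with $|g(x')-g(x'')|^{\theta}\leq\Lip(g)^{\theta}\,d_{X}(x',x'')^{\theta}$, the defining inequality holds, so $g\boxdot e\in\mathscr{A}^{L}_{\theta}(X,F)$. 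Note that the free constant $C$ available in Definition \ref{fruher} makes this step cleaner than its Banach-space analogue, where one had to rescale $S$.

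Next, for $\bf (\widetilde{\widetilde{NOI_1\,}})$ and the norm estimate, let $A\in\mathscr{L}(X_{0},X)$, $T\in\mathscr{A}^{L}_{\theta}(X,Y)$, $B\in\mathscr{L}(Y,Y_{0})$ and $\epsilon>0$. I would choose a metric space $Z$, a map $S\in\mathscr{A}^{L}(X,Z)$ and a constant $C$ realizing (\ref{poooor}) with $C\,\mathbf{A}^{L}(S)^{1-\theta}\leq(1+\epsilon)\,\mathbf{A}^{L}_{\theta}(T)$. Chaining the Lipschitz estimate for $B$, the inequality (\ref{poooor}) evaluated at $Ax_{0}',Ax_{0}''$, and $d_{X}(Ax_{0}',Ax_{0}'')\leq\Lip(A)\,d_{X_{0}}(x_{0}',x_{0}'')$, and recognizing $S\circ A\in\mathscr{A}^{L}(X_{0},Z)$ (by $\bf (\widetilde{\widetilde{NOI_1\,}})$ for $\mathscr{A}^{L}$) as the new witness, gives
$$d_{Y_{0}}(BTAx_{0}',BTAx_{0}'')\leq \Lip(B)\,C\,\Lip(A)^{\theta}\,d_{Z}((S\circ A)x_{0}',(S\circ A)x_{0}'')^{1-\theta}\,d_{X_{0}}(x_{0}',x_{0}'')^{\theta}.$$
Thus $BTA\in\mathscr{A}^{L}_{\theta}(X_{0},Y_{0})$ with witness $S\circ A$ and constant $\Lip(B)\,C\,\Lip(A)^{\theta}$.

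Finally, from (\ref{fliehen}) and $\mathbf{A}^{L}(S\circ A)\leq\mathbf{A}^{L}(S)\,\Lip(A)$ I would estimate
$$\mathbf{A}^{L}_{\theta}(BTA)\leq \Lip(B)\,C\,\Lip(A)^{\theta}\,\bigl(\mathbf{A}^{L}(S)\,\Lip(A)\bigr)^{1-\theta}=\Lip(B)\,\Lip(A)\,C\,\mathbf{A}^{L}(S)^{1-\theta}\leq(1+\epsilon)\,\Lip(B)\,\Lip(A)\,\mathbf{A}^{L}_{\theta}(T),$$
using $\Lip(A)^{\theta}\Lip(A)^{1-\theta}=\Lip(A)$; letting $\epsilon\to0$ yields the claimed inequality. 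The one real point to get right is this exponent bookkeeping---splitting $\Lip(A)$ as $\Lip(A)^{\theta}$ (coming from the pull-back in the base of (\ref{poooor})) times $\Lip(A)^{1-\theta}$ (coming from the norm of $S\circ A$)---together with confirming that $S\circ A$ genuinely lies in $\mathscr{A}^{L}$. Neither is deep, so I expect no serious obstacle beyond careful accounting.
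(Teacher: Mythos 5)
Your proposal is correct and follows essentially the same route as the paper's proof: the same chaining $d_{Y_{0}}(BTAx_{0}',BTAx_{0}'')\leq\Lip(B)\,C\,\Lip(A)^{\theta}\,d_{Z}((S\circ A)x_{0}',(S\circ A)x_{0}'')^{1-\theta}\,d_{X_{0}}(x_{0}',x_{0}'')^{\theta}$ with witness $S\circ A$ and the same exponent split $\Lip(A)=\Lip(A)^{\theta}\Lip(A)^{1-\theta}$ for the norm bound (the paper passes to the infimum directly where you use an $\epsilon$-optimal witness, which is equivalent). Your treatment of $\bf(\widetilde{\widetilde{NOI_0\,}})$ via the free constant $C=(\Lip(g)\|e\|)^{\theta}$ is a minor, harmless simplification of the rescaling the paper refers back to in Proposition \ref{Flughafen}.
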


\begin{proof}
The proof of condition $\bf (\widetilde{\widetilde{NOI_0\,}})$ is similar to the proof of algebraic condition $\bf (\widetilde{PNOI_0})$ in Proposition \ref{Flughafen}. To prove the condition $\bf (\widetilde{\widetilde{NOI_1\,}})$, let $A\in \mathscr{L}(X_{0},X)$, $T\in\mathscr{A}^{L}_{\theta}(X, Y)$, and $B\in\mathscr{L}(Y, Y_{0})$ and $x'_{0}, x''_{0}$ in $X$. Then
\begin{align}
d_{Y_{0}}(BTA x'_{0}, BTA x''_{0})&\leq \Lip(B)\cdot d_{Y}(TA x'_{0}, TA x''_{0}) \nonumber \\
&\leq\Lip(B)\cdot C\cdot d_{Z}(S\circ A (x'), S\circ A (x''))^{1-\theta}\cdot d_{X}(Ax',Ax'')^{\theta} \nonumber \\
&\leq C \cdot\Lip(B)\cdot\Lip(A)^{\theta}\cdot d_{Z}(S\circ A (x'), S\circ A (x''))^{1-\theta} d_{X}(x',x'')^{\theta}. \nonumber
\end{align}
Since a Lipschitz map $\widetilde{S}:=S\circ A\in\mathscr{A}^{L}(X_{0}, Z)$ and $\widetilde{C}:=C \cdot\Lip(B)\cdot\Lip(A)^{\theta}$, hence $BTA\in\mathscr{A}^{L}_{\theta}(X_{0}, Y_{0})$. Moreover, from (\ref{fliehen}) we have
\begin{align}\label{jo}
\mathbf{A}^{L}_{\theta}(BTA)&:=\inf \widetilde{C}\cdot \mathbf{A}^{L}(\widetilde{S})^{1-\theta} \nonumber \\
&\leq C \cdot\Lip(B)\cdot\Lip(A)^{\theta}\cdot \mathbf{A}^{L}(S\circ A)^{1-\theta} \nonumber \\
&= C \cdot\Lip(B)\cdot\Lip(A)\cdot \mathbf{A}^{L}(S)^{1-\theta}. 
\end{align}
Taking the infimum over all such $S\in\mathscr{A}^{L}(X, Z)$ on the right side of (\ref{jo}), we have
$$\mathbf{A}^{L}_{\theta}(BTA)\leq\Lip(B)\cdot\mathbf{A}^{L}_{\theta}(T)\cdot\Lip(A).$$
\end{proof}

\subsection{Basic Examples of Lipschitz interpolative ideal procedure}

\begin{enumerate}
	\item[$\bf (1)$] Lipschitz $\left(p,s,\theta\right)$-summing maps\label{schlumpf02}
	
	A Lipschitz map $T$ from $X$ to $Y$ is called Lipschitz $\left(p,s,\theta\right)$-summing if there is a constant $C\geq 0$  such that 
	
\begin{equation}\label{metaphor}
\left\|(\sigma,Tx',Tx'')\Big|\ell_{\frac{p}{1-\theta}}(\mathbb{R}\times Y\times Y)\right\|\leq C\cdot\left\|(\sigma,x',x'')\Big|\delta_{s,\theta}^{L}(\mathbb{R}\times X\times X)\right\|.
\end{equation}	
	
 for arbitrary finite sequences $x'$, $x''$ in $X$ and $\sigma$ in $\mathbb{R}$. Let us denote by $\Pi^{L}_{\left(p,s,\theta\right)}(X,Y)$ the class of all Lipschitz $\left(p,s,\theta\right)$-summing maps from $X$ to $Y$ with $$\pi^{L}_{\left(p,s,\theta\right)}(T)=\inf C.$$
where the infimum is taken over all constant $C$ satisfying (\ref{metaphor}). The proof of the following result is not difficult to prove it.

\begin{prop}\label{schlumrtpf02}
$\left[\Pi^{L}_{\left(p,s,\theta\right)}, \pi^{L}_{\left(p,s,\theta\right)}\right]$ is a  nonlinear ideal. 
\end{prop}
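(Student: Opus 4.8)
The plan is to check the two axioms of Definition \ref{Auto777777777} directly from the defining inequality \eqref{metaphor}, since the class $\Pi^{L}_{(p,s,\theta)}$ is described by a single sequence estimate. First I would dispose of the algebraic condition $\bf (\widetilde{\widetilde{NOI_0\,}})$ by verifying that every elementary Lipschitz operator $g\boxdot e$ (with $g\in X^{\#}$ and $e$ in a Banach space $F$) satisfies \eqref{metaphor} with constant $\Lip(g)\,\|e\|$; then I would establish the composition condition $\bf (\widetilde{\widetilde{NOI_1\,}})$ together with the estimate $\pi^{L}_{(p,s,\theta)}(BTA)\le \Lip(B)\,\pi^{L}_{(p,s,\theta)}(T)\,\Lip(A)$.

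For $\bf (\widetilde{\widetilde{NOI_0\,}})$, observe that $(g\boxdot e)x'-(g\boxdot e)x''=(g(x')-g(x''))\,e$, so $d_F((g\boxdot e)x',(g\boxdot e)x'')=|g(x')-g(x'')|\,\|e\|$. The trick is to split $|g(x'_j)-g(x''_j)|=|g(x'_j)-g(x''_j)|^{1-\theta}\,|g(x'_j)-g(x''_j)|^{\theta}$ and bound the second factor by $\Lip(g)^{\theta}d_X(x'_j,x''_j)^{\theta}$. Inserting this into the left-hand $\ell_{p/(1-\theta)}$-norm, passing from exponent $p/(1-\theta)$ to $s/(1-\theta)$ via the elementary inclusion $\ell_{s/(1-\theta)}\hookrightarrow\ell_{p/(1-\theta)}$ (which runs in the required direction precisely because $s\le p$), and finally using the normalized function $g/\Lip(g)\in B_{X^{\#}}$ as a single admissible test function in the supremum defining $\delta^{L}_{s,\theta}$, yields $\|(\sigma,(g\boxdot e)x',(g\boxdot e)x'')|\ell_{p/(1-\theta)}\|\le \Lip(g)\,\|e\|\,\|(\sigma,x',x'')|\delta^{L}_{s,\theta}(\mathbb{R}\times X\times X)\|$. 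Hence $g\boxdot e\in\Pi^{L}_{(p,s,\theta)}(X,F)$ with $\pi^{L}_{(p,s,\theta)}(g\boxdot e)\le\Lip(g)\,\|e\|$.

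For $\bf (\widetilde{\widetilde{NOI_1\,}})$, let $A\in\mathscr{L}(X_0,X)$, $T\in\Pi^{L}_{(p,s,\theta)}(X,Y)$ and $B\in\mathscr{L}(Y,Y_0)$. I would first pull $\Lip(B)$ out of the outer norm using $d_{Y_0}(BTAx',BTAx'')\le\Lip(B)\,d_Y(TAx',TAx'')$, reducing the left-hand side to $\Lip(B)$ times the $\ell_{p/(1-\theta)}$-norm of $(\sigma,T(Ax'),T(Ax''))$. Applying the $(p,s,\theta)$-summing inequality for $T$ to the sequences $(Ax'_j),(Ax''_j)$ converts this into $\Lip(B)\,\pi^{L}_{(p,s,\theta)}(T)\,\|(\sigma,Ax',Ax'')|\delta^{L}_{s,\theta}\|$. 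The remaining task is to transport the $\delta^{L}_{s,\theta}$-norm back along $A$: for each $f\in B_{X^{\#}}$ the composite $f\circ A/\Lip(A)$ lies in $B_{X_0^{\#}}$, and combined with $d_X(Ax'_j,Ax''_j)\le\Lip(A)\,d_{X_0}(x'_j,x''_j)$ this gives $\|(\sigma,Ax',Ax'')|\delta^{L}_{s,\theta}(\mathbb{R}\times X\times X)\|\le\Lip(A)\,\|(\sigma,x',x'')|\delta^{L}_{s,\theta}(\mathbb{R}\times X_0\times X_0)\|$. Chaining the three estimates proves $BTA\in\Pi^{L}_{(p,s,\theta)}(X_0,Y_0)$ with the claimed norm bound.

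The only genuinely delicate point is the exponent bookkeeping in $\bf (\widetilde{\widetilde{NOI_0\,}})$: the outer sum carries exponent $p/(1-\theta)$ while the target norm $\delta^{L}_{s,\theta}$ carries $s/(1-\theta)$, so one must invoke the monotonicity of the $\ell_u$-norms in the correct direction (tacitly using $s\le p$) and simultaneously reduce the supremum over $B_{X^{\#}}$ to the single normalized functional $g/\Lip(g)$. Everything else is the routine homogeneity-and-H\"older manipulation already used in Propositions \ref{Flughafen} and \ref{sun1}, so no further obstacle is expected.
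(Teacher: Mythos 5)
Your proof is correct, and it is exactly the direct verification of the two axioms of Definition \ref{Auto777777777} that the paper intends: the paper itself gives no argument for Proposition \ref{schlumrtpf02}, stating only that it ``is not difficult to prove,'' so your write-up supplies the missing proof rather than diverging from one. The only substantive point is the one you already isolate: the passage from the $\ell_{p/(1-\theta)}$-norm to the $\ell_{s/(1-\theta)}$-norm in checking $(\widetilde{\widetilde{NOI_0\,}})$ needs the monotonicity inclusion $\ell_{s/(1-\theta)}\hookrightarrow\ell_{p/(1-\theta)}$, hence $s\le p$ --- a hypothesis the paper never states for this definition but which is implicit in its remark identifying the case $\theta=0$ with the Lipschitz $(p,s)$-summing maps of the cited references, where $s\le p$ is assumed.
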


\begin{remark}

\item[$\bf (1)$] If $\theta = 0$, then the class  $\Pi^{L}_{\left(p,s,\theta\right)}(X,Y)$ coincides with the class $\Pi^{L}_{\left(p,s\right)}(X,Y)$ which considered in \cite {Mass15} for $\infty\geq p\geq q > 0$ and \cite{JA12} for $1\leq q< p$.

\item[$\bf (2)$] For a special case, if $p=s$,   a Lipschitz $\left(p,\theta\right)$-summing map defined in \cite{AcRuYa} if there is a constant $C\geq 0$  such that 
\begin{equation}\label{metaphor1}
\left\|(\lambda,Tx',Tx'')\Big|\ell_{\frac{p}{1-\theta}}(\mathbb{R}\times Y\times Y)\right\|\leq C\cdot\left\|(\lambda,x',x'')\Big|\delta_{p,\theta}^{L}(\mathbb{R}\times X\times X)\right\|.
\end{equation}
for arbitrary finite sequences $x'$, $x''$ in $X$ and $\lambda$ in $\mathbb{R^{+}}$. Let us denote by $\Pi^{L}_{\left(p,\theta\right)}(X,Y)$ the class of all Lipschitz $\left(p,\theta\right)$-summing maps from $X$ to $Y$ with $$\pi^{L}_{\left(p,\theta\right)}(T)=\inf C.$$
\end{remark}
where the infimum is taken over all constant $C$ satisfying (\ref{metaphor1}). The next result is a consequence of Proposition \ref{schlumrtpf02}.
\begin{cor}
$\left[\Pi^{L}_{\left(p,\theta\right)}, \pi^{L}_{\left(p,\theta\right)}\right]$ is a   nonlinear ideal.  
\end{cor}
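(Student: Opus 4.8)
The plan is to obtain this Corollary as the direct specialization $s=p$ of Proposition \ref{schlumrtpf02}. First I would observe that, upon setting $s=p$, the defining inequality (\ref{metaphor}) for a Lipschitz $\left(p,s,\theta\right)$-summing map becomes
$$\left\|(\sigma,Tx',Tx'')\Big|\ell_{\frac{p}{1-\theta}}(\mathbb{R}\times Y\times Y)\right\|\leq C\cdot\left\|(\sigma,x',x'')\Big|\delta_{p,\theta}^{L}(\mathbb{R}\times X\times X)\right\|,$$
which is formally identical to the defining inequality (\ref{metaphor1}) for a Lipschitz $\left(p,\theta\right)$-summing map.

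The only discrepancy between the two conditions is that the scalar sequence in (\ref{metaphor}) ranges over $\mathbb{R}$, while the scalar sequence in (\ref{metaphor1}) is restricted to $\mathbb{R}^{+}$. Since only the moduli $\left|\sigma_{j}\right|$ (respectively $\left|\lambda_{j}\right|$) enter either side of both inequalities, replacing each $\sigma_{j}$ by $\left|\sigma_{j}\right|\geq 0$ leaves both the $\ell_{p/(1-\theta)}$-norm and the $\delta_{p,\theta}^{L}$-norm unchanged. Hence a map satisfies (\ref{metaphor}) with $s=p$ for every real scalar sequence if and only if it satisfies (\ref{metaphor1}) for every nonnegative scalar sequence, and the infimal constants agree. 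This yields the identification $\Pi^{L}_{\left(p,\theta\right)}(X,Y)=\Pi^{L}_{\left(p,p,\theta\right)}(X,Y)$ together with $\pi^{L}_{\left(p,\theta\right)}=\pi^{L}_{\left(p,p,\theta\right)}$.

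With this identification in hand, the ideal axioms of Definition \ref{Auto777777777} are inherited verbatim from the corresponding verification for $\Pi^{L}_{\left(p,s,\theta\right)}$, so the conclusion follows at once. I do not anticipate any genuine obstacle: the entire content is already contained in Proposition \ref{schlumrtpf02}, and the argument reduces to the elementary remark about the scalar domain described above. If one prefers a self-contained check, the same three steps used for Proposition \ref{schlumrtpf02} can simply be rewritten with $s=p$ throughout, using $\bf (\widetilde{\widetilde{NOI_0}})$ for the elementary Lipschitz tensors $g\boxdot e$ and the monotonicity of the $\delta_{p,\theta}^{L}$- and $\ell_{p/(1-\theta)}$-norms under pre- and post-composition to verify $\bf (\widetilde{\widetilde{NOI_1}})$.
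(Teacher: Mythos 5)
Your proposal is correct and follows exactly the route the paper takes: the corollary is presented there as the immediate specialization $s=p$ of Proposition \ref{schlumrtpf02}, with no further argument given. Your additional observation that the discrepancy between scalar sequences in $\mathbb{R}$ and in $\mathbb{R}^{+}$ is immaterial (since only the moduli enter both sides) is a useful point of care that the paper leaves implicit.
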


As a consequence of a general definition of Lipschitz interpolative ideal procedure between metric spaces   the Lipschitz  $(p, \theta)$-summing map    has  following characterize result.
   
\begin{thm} \cite{AcRuYa}
\label{dom} Let $1\leq p<\infty $, $0\leq \theta <1$ and $T\in \mathrm{Lip}%
(X,Y)$. \textit{The following statements are equivalent.}

\begin{enumerate}
\item[(i)] $T\in \Pi _{p,\theta }^{L}(X,Y)$.

\item[(ii)] \textit{There is a constant }$C\geq 0$ and a regular Borel
probability measure $\mu $ on $B_{X^{\#}}$ such that
\begin{equation*}
d_{Y}(Tx', Tx'')\leq C\left( \int\nolimits_{B_{X^{\#}}}\left(
|fx'- fx''|^{1-\theta } d_{X}(x', x'')^{\theta }\right) ^{\frac{p%
}{1-\theta }} d\mu \left( f\right) \right) ^{\frac{1-\theta }{p}}
\end{equation*}%
for all $x', x''\in X$.

\item[(iii)] There is a constant $C\geq 0$ such that for all $%
(x'_{i})_{j=1}^{m},(x''_{j})_{j=1}^{m}$ in $X$ and all $%
(a_{j})_{j=1}^{m}\subset \mathbb{R}^{+}$ we have
\begin{equation*}
\begin{array}{l}
\displaystyle\hspace{-1cm}\left(
\sum_{j=1}^{m}a_{i} d_{Y}(T(x'_{j}),T(x''_{j}))^{\frac{p}{1-\theta }%
}\right) ^{\frac{1-\theta }{p}} \\
\displaystyle\leq C\underset{f\in B_{X^{\#}}}{\sup }\left(
\sum_{j=1}^{m} a_{j}\left( |f(x'_{j})-f(x''_{j})|^{1-\theta
}d_{X}(x'_{j}, x''_{j})^{\theta }\right) ^{\frac{p}{1-\theta }}\right) ^{%
\frac{1-\theta }{p}}\text{.}%
\end{array}%
\end{equation*}

\item [(iv)] There exists a regular Borel probability measure $\mu $\textit{on }$B_{X^{\#}}$ \textit{and a Lipschitz operator } $v:X_{p,\theta }^{\mu
}\rightarrow Y$\textit{\ such that the following diagram commutes}
\begin{equation*}
\xymatrix{X\ar[r]^T\ar[d]^{\delta_{X}} & Y\\ \delta_{X}(X) \ar[r]^{\phi\circ i}
&X_{p,\theta }^\mu \ar[u]^v}
\end{equation*}%
Furthermore, the infimum of the constants $C\geq 0$ in $(2)$ and $(3)$ is $%
\pi _{p,\theta }^{L}\left( T\right) $.
\end{enumerate}
\end{thm}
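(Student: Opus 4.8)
The plan is to run the Pietsch domination scheme already exploited in the proof of Theorem~\ref{thm1}, establishing the cycle $(i)\Rightarrow(iii)\Rightarrow(ii)\Rightarrow(i)$ and then the factorization equivalence $(ii)\Leftrightarrow(iv)$, all while tracking constants so that the infima appearing in $(ii)$ and $(iii)$ both reduce to $\pi_{p,\theta}^{L}(T)$.

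The implications $(i)\Rightarrow(iii)$ and $(ii)\Rightarrow(i)$ are the routine directions. For $(i)\Rightarrow(iii)$, given a finite family $(x'_{j},x''_{j})$ and weights $a_{j}\geq 0$, I would absorb $a_{j}^{(1-\theta)/p}$ into the scalars $\sigma_{j}$ of \eqref{metaphor1} and read both sides as the $\ell_{p/(1-\theta)}$- and $\delta_{p,\theta}^{L}$-norms; then $(iii)$ is just a homogeneous restatement of the defining inequality of $\Pi_{p,\theta}^{L}$. For $(ii)\Rightarrow(i)$ I would apply the integral estimate pointwise to each pair, raise to the power $p/(1-\theta)$, sum over $j$, interchange sum and integral by Fubini, and use that $\mu$ is a probability measure to bound the integral by the supremum over $f\in B_{X^{\#}}$ of the integrand, which is exactly the $\delta_{p,\theta}^{L}$-norm.

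The core of the argument, and the step I expect to be the main obstacle, is $(iii)\Rightarrow(ii)$. I would mimic the separation argument from the proof of Theorem~\ref{thm1}: on the weak$^{*}$-compact convex set $W(B_{X^{\#}})$ define, for each finite system $(\sigma,x',x'')$, the real-valued function
\begin{equation*}
\Phi(\mu)=\sum_{j}|\sigma_{j}|^{\frac{p}{1-\theta}}d_{Y}(Tx'_{j},Tx''_{j})^{\frac{p}{1-\theta}}-C^{\frac{p}{1-\theta}}\sum_{j}\int_{B_{X^{\#}}}\bigl(|\sigma_{j}|\,|fx'_{j}-fx''_{j}|^{1-\theta}d_{X}(x'_{j},x''_{j})^{\theta}\bigr)^{\frac{p}{1-\theta}}\,d\mu(f).
\end{equation*}
Each such $\Phi$ is affine and weak$^{*}$-continuous, and the family of all of them is concave in the sense required by \cite[E.4.2]{P78}, since concatenating finite systems realizes convex combinations. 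By compactness of $B_{X^{\#}}$ there is a maximizing $f_{0}$, and evaluating at the Dirac measure $\delta_{f_{0}}$ shows $\Phi(\delta_{f_{0}})\leq 0$ precisely because of $(iii)$. Ky Fan's lemma then yields a single $\mu_{0}\in W(B_{X^{\#}})$ with $\Phi(\mu_{0})\leq 0$ for every such $\Phi$; specializing to one-term systems and normalizing produces the integral inequality $(ii)$ with the same constant $C$.

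Finally, for $(ii)\Leftrightarrow(iv)$ I would realize $X_{p,\theta}^{\mu}$ as the completion of $\delta_{X}(X)$ under the interpolated pseudometric $\bigl(\int_{B_{X^{\#}}}|fx'-fx''|^{p}\,d\mu\bigr)^{(1-\theta)/p}d_{X}(x',x'')^{\theta}$, with $\phi\circ i$ the canonical inclusion. Inequality $(ii)$ says exactly that $x\mapsto Tx$ depends Lipschitz-continuously on the image $\phi(i(\delta_{X}(x)))$, so it factors as $T=v\circ(\phi\circ i)\circ\delta_{X}$ with $\Lip(v)\leq C$, giving the commuting diagram; conversely such a factorization reproduces $(ii)$ with $C=\Lip(v)$. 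Carrying the constants through all four implications then shows the infimum of admissible $C$ in $(ii)$ and $(iii)$ equals $\pi_{p,\theta}^{L}(T)$.
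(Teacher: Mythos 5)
You have correctly identified that the engine here is the Pietsch domination scheme; note, though, that the paper itself states Theorem~\ref{dom} without proof, citing \cite{AcRuYa}, so the only internal point of comparison is the proof of Theorem~\ref{thm1}, which you adapt appropriately. Your cycle $(i)\Leftrightarrow(iii)$, $(iii)\Rightarrow(ii)$, $(ii)\Rightarrow(i)$ is sound: the substitution $a_j=|\sigma_j|^{p/(1-\theta)}$ does make $(iii)$ a homogeneous restatement of \eqref{metaphor1}; the Fubini/probability-measure estimate for $(ii)\Rightarrow(i)$ is correct; and your affine, weak$^{*}$-continuous functionals $\Phi$ on $W(B_{X^{\#}})$, with the rescaled concatenation making the family concave and Dirac measures at maximizing $f_0\in B_{X^{\#}}$ witnessing $\Phi\le 0$, is exactly the one-measure version of the argument the paper runs for Theorem~\ref{thm1}, with the constant $C$ carried through so that the infima in $(ii)$ and $(iii)$ equal $\pi^{L}_{p,\theta}(T)$.

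The genuine gap is in $(ii)\Leftrightarrow(iv)$. The quantity $\varrho(x',x''):=\bigl(\int_{B_{X^{\#}}}|fx'-fx''|^{p}\,d\mu\bigr)^{(1-\theta)/p}d_{X}(x',x'')^{\theta}$ that you propose as the "interpolated pseudometric" is in general \emph{not} a pseudometric, so "the completion of $\delta_X(X)$ under it" is not defined. Concretely, take $X=\{x,y,z\}$ with base point $x$, $d(x,y)=d(y,z)=1$, $d(x,z)=2$, let $f(x)=0$, $f(y)=f(z)=1$ (a $1$-Lipschitz function vanishing at the base point), $\mu=\delta_{f}$ and $\theta=\tfrac12$: then $\varrho(x,z)=\sqrt{2}>1=\varrho(x,y)+\varrho(y,z)$. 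The underlying reason is that H\"older's inequality gives $a_1^{1-\theta}b_1^{\theta}+a_2^{1-\theta}b_2^{\theta}\le (a_1+a_2)^{1-\theta}(b_1+b_2)^{\theta}$, which is the wrong direction for subadditivity of a product of powers of two pseudometrics. The repair is standard but must be said: replace $\varrho$ by the largest pseudometric it dominates, $\widetilde{\varrho}(x',x'')=\inf\sum_{i}\varrho(u_{i},u_{i+1})$ over finite chains from $x'$ to $x''$; applying $(ii)$ to each link of a chain still gives $d_{Y}(Tx',Tx'')\le C\,\widetilde{\varrho}(x',x'')$, so $v$ is well defined with $\Lip(v)\le C$ on the resulting quotient-completion, and since $\widetilde{\varrho}\le\varrho$ the converse implication $(iv)\Rightarrow(ii)$ survives. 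This (or an equivalent construction) is what the space $X^{\mu}_{p,\theta}$ and the maps $\phi\circ i$ in \cite{AcRuYa} encode; with that correction your argument goes through.
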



\item[$\bf (2)$] Lipschitz $ \left( s;q,\theta \right) $-mixing operators\label{schl45umpf02} 

D. Achour, E. Dahia and M. A. S. Saleh \cite{aem18} defined a Lipschitz $ \left( s;q,\theta \right) $-mixing operator if there is a constant $C\geq 0$ such that
\begin{equation}
\mathfrak{m}_{(s;q)}^{L,\theta }(\sigma ,Tx^{\prime },Tx^{\prime \prime
})\leq C\cdot \delta _{q\theta }^{L}(\sigma ,x^{\prime },x^{\prime \prime })
\label{flat1}
\end{equation}%
for arbitrary finite sequences $x^{\prime }$, $x^{\prime \prime }$ in $X$ and $\sigma $ in $\mathbb{R}$. Let us denote by $\mathbf{M}_{(s;q)}^{L,\theta }(X,Y)$ the class of all Lipschitz $\left( s;q,\theta
\right) $-mixing maps from $X$ to $Y.$ In such case, we put
\begin{equation*}
\mathbf{m}_{(s;q)}^{L,\theta }(T)=\inf C,
\end{equation*}%
where the infimum is taken over all constant $C$ satisfying (\ref{flat1}). The proof of the following result is not difficult to prove it.

\begin{prop}\label{schwwlumrtpf02}
$\left[\mathbf{M}_{(s;q)}^{L,\theta }, \mathbf{m}_{(s;q)}^{L,\theta }\right]$ is a nonlinear ideal. 
\end{prop}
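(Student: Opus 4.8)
The plan is to verify the two defining conditions $\bf (\widetilde{\widetilde{NOI_0\,}})$ and $\bf (\widetilde{\widetilde{NOI_1\,}})$ of Definition~\ref{Auto777777777}, proceeding exactly as in the proof for Lipschitz $(p,s,\theta)$-summing maps (Proposition~\ref{schlumrtpf02}). The only two facts driving everything are the behaviour of the two quantities appearing in the defining inequality (\ref{flat1}) under composition: the right-hand $\delta_{q\theta}^{L}$ term scales by $\Lip(A)$ under precomposition with $A$, and the left-hand mixing quantity $\mathfrak{m}_{(s;q)}^{L,\theta}$ scales by $\Lip(B)$ under postcomposition with $B$.

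First I would check $\bf (\widetilde{\widetilde{NOI_0\,}})$. For an elementary Lipschitz operator $g\boxdot e$ with $g\in X^{\#}$ and $e\in F$, the differences are $(g\boxdot e)x'-(g\boxdot e)x''=(g(x')-g(x''))\,e$, so each difference is a scalar multiple of the fixed vector $e$. Consequently $\mathfrak{m}_{(s;q)}^{L,\theta}(\sigma,(g\boxdot e)x',(g\boxdot e)x'')$ factors as $\|e\|$ times a scalar mixing expression in the numbers $\sigma_j\,(g(x'_j)-g(x''_j))$, which is in turn dominated by $\Lip(g)\cdot\|e\|\cdot\delta_{q\theta}^{L}(\sigma,x',x'')$. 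Hence $g\boxdot e\in\mathbf{M}_{(s;q)}^{L,\theta}(X,F)$, and $\mathbf{M}_{(s;q)}^{L,\theta}$ contains nonzero Lipschitz operators.

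Next I would establish $\bf (\widetilde{\widetilde{NOI_1\,}})$ together with the norm estimate. For the scaling of $\delta_{q\theta}^{L}$, assume $\Lip(A)>0$ (otherwise $BTA$ is constant and the claim is trivial), write $f\circ A=\Lip(A)\,g$ with $g\in B_{X_0^{\#}}$, and use $d_X(Ax'_j,Ax''_j)\le\Lip(A)\,d_{X_0}(x'_j,x''_j)$ to obtain $|f(Ax'_j)-f(Ax''_j)|^{1-\theta}\,d_X(Ax'_j,Ax''_j)^{\theta}\le\Lip(A)\,|g(x'_j)-g(x''_j)|^{1-\theta}\,d_{X_0}(x'_j,x''_j)^{\theta}$; taking the supremum over $f\in B_{X^{\#}}$ gives $\delta_{q\theta}^{L}(\sigma,Ax',Ax'')\le\Lip(A)\,\delta_{q\theta}^{L}(\sigma,x',x'')$. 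Combining the postcomposition estimate $\mathfrak{m}_{(s;q)}^{L,\theta}(\sigma,By',By'')\le\Lip(B)\,\mathfrak{m}_{(s;q)}^{L,\theta}(\sigma,y',y'')$ with the hypothesis on $T$ yields $\mathfrak{m}_{(s;q)}^{L,\theta}(\sigma,BTAx',BTAx'')\le\Lip(B)\,C\,\Lip(A)\,\delta_{q\theta}^{L}(\sigma,x',x'')$, so $BTA\in\mathbf{M}_{(s;q)}^{L,\theta}(X_0,Y_0)$ with $\mathbf{m}_{(s;q)}^{L,\theta}(BTA)\le\Lip(B)\,\mathbf{m}_{(s;q)}^{L,\theta}(T)\,\Lip(A)$.

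The main obstacle is the postcomposition estimate for $\mathfrak{m}_{(s;q)}^{L,\theta}$, since this mixing quantity is defined in the external reference \cite{aem18} and is not spelled out in the excerpt; one must invoke (or re-derive) its monotonicity under Lipschitz postcomposition. As in the summing case, this follows from the fact that the mixing norm is built from the differences $y'_j-y''_j$ through expressions that increase by at most the factor $\Lip(B)$ when $y'_j,y''_j$ are replaced by $By'_j,By''_j$. Once this property is available, both axioms follow from the routine computations above, and the proof is complete.
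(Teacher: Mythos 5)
The paper gives no proof of this proposition at all (it merely declares the result ``not difficult''), so there is nothing to compare your argument against; your sketch is the natural and, as far as I can check, correct verification of the two axioms $\bf (\widetilde{\widetilde{NOI_0\,}})$ and $\bf (\widetilde{\widetilde{NOI_1\,}})$ of Definition~\ref{Auto777777777}, with the precomposition estimate $\delta_{q\theta}^{L}(\sigma,Ax',Ax'')\leq\Lip(A)\,\delta_{q\theta}^{L}(\sigma,x',x'')$ worked out correctly. The one place you are (rightly) explicit about relying on the external reference \cite{aem18} is the postcomposition monotonicity of $\mathfrak{m}_{(s;q)}^{L,\theta}$; note only that since $Y$ and $Y_{0}$ are metric spaces your phrase ``differences $y'_j-y''_j$'' should be read as ``pairs evaluated through Lipschitz functionals $h\in B_{Y_0^{\#}}$,'' for which $\Lip(h\circ B)\leq\Lip(B)$ gives the required factor.
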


\end{enumerate}

\begin{remark}
 Definition \ref{fruher} can be generalized as follows. Let $0\leq \theta< 1$. A Lipschitz map $T$ from $X$ into $Y$ belongs to $\left(\mathscr{A}^{L}, \mathscr{B}^{L}\right)_{\theta}(X, Y)$ if there exist a constant $C\geq 0$, a metric spaces $Z_{1}$, $Z_{2}$ and a Lipschitz maps $S_{1}\in\mathscr{A}^{L}(X, Z_{1})$ and $S_{2}\in\mathscr{B}^{L}(X, Z_{2})$ such that 
\begin{equation}\label{qlastwagen}
d_{Y}(Tx', Tx'')\leq C\cdot d_{Z_{1}}(S_{1}x', S_{1}x'')^{1-\theta}\cdot d_{Z_{2}}(S_{2}x', S_{2}x'')^{\theta},\ \  \forall\: x',\; x'' \in X.
\end{equation}
For each $T\in\left(\mathscr{A}^{L}, \mathscr{B}^{L}\right)_{\theta}(X, Y)$, we set 
\begin{equation}
\left(\mathbf{A}^{L}, \mathbf{B}^{L}\right)_{\theta}(T):=\inf C\cdot\mathbf{A}^{L}(S_{1})^{1-\theta}\cdot\mathbf{B}^{L}(S_{2})^{\theta}
\end{equation}
where the infimum is taken over all Lipschitz operators $S_{1}$, $S_{2}$  admitted in (\ref{qlastwagen}). The nonlinear ideal $\left[\mathscr{A}^{L}_{\theta}, \mathbf{A}^{L}_{\theta}\right]$ now appear as $\left[\left(\mathscr{A}^{L}, \mathscr{L}\right)_{\theta}, \left(\mathbf{A}^{L}, \Lip(\cdot)\right)_{\theta}\right]$. 

\end{remark}

\end{document}